	\definecolor{green}{HTML}{007F00} 
\tikzset{
full/.style={inner sep=2.5pt,draw,fill=black,shape=circle,outer sep=5pt},
half/.style={inner sep=2.5pt,draw,pattern=crosshatch dots,pattern color=black,
shape=circle,outer sep=5pt},
empty/.style={inner sep=2.5pt,draw,fill=white,shape=circle,outer sep=5pt}
}
 \pgfplotsset{compat=1.18} 
\newcommand*{\Scale}[2][4]{\scalebox{#1}{\ensuremath{#2}}}%
\newtheorem{theorem}{Theorem}
\newtheorem{lemma}[theorem]{Lemma}
\newtheorem{definition}[theorem]{Definition}
\newtheorem{corollary}[theorem]{Corollary}
\theoremstyle{remark}
\newtheorem{remark}[theorem]{Remark}
\newtheorem{example}[theorem]{Example}
\newtheorem{exercise}[theorem]{Exercise}
\newcommand{\Z}{\mathbb{Z}}
\newcommand{\Q}{\mathbb{Q}}
\newcommand{\R}{\mathbb{R}}
\newcommand{\C}{\mathbb{C}}
\newcommand{\bbF}{\mathbb{F}}
\newcommand{\bbQ}{\mathbb{Q}}
\newcommand{\bbR}{\mathbb{R}}
\newcommand{\bbZ}{\mathbb{Z}}
\newcommand{\Cc}{\mathbb{C}^{\times}}
\newcommand{\ra}{\rightarrow}
\newcommand{\sra}{\twoheadrightarrow}
\newcommand{\xra}{\xrightarrow}
\newcommand{\SL}{\operatorname{SL}}
\newcommand{\depth}{\operatorname{depth}}
\newcommand{\Aut}{\operatorname{Aut}}
\newcommand{\Irr}{\operatorname{Irr}}
\newcommand{\Ind}{\operatorname{Ind}}
\newcommand{\Res}{\operatorname{Res}}
\newcommand{\Tr}{\operatorname{Tr}}
\newcommand{\Nm}{\operatorname{Nm}}
\newcommand{\Group}{\operatorname{I}}
\newcommand{\Gal}{\operatorname{Gal}}
\newcommand{\val}{\operatorname{val}}
 \newcommand{\Hom}{\operatorname{Hom}}
 \newcommand{\Stab}{\operatorname{Stab}}
 \newcommand{\Gm}{\mathbb{G}_m}
 \DeclareMathOperator{\cl}{cl}
 \DeclareMathOperator{\Norm}{{N}}
 \DeclareMathOperator{\Sh}{{Sh}}
 \DeclareMathOperator{\pr}{{pr}}
 \DeclareMathOperator{\Fr}{{Fr}}
 \DeclareMathOperator{\LLC}{{LLC}}
 \DeclareMathOperator{\ab}{{ab}}
 \DeclareMathOperator{\res}{res}
\newcommand\mathdef\textit
\newcommand\tn\textnormal
\newcommand\defeq{:=}
\begin{document}

\title{Normalized Indexing for Ramification Subgroups}

\author{Stephen DeBacker}
\address{University of Michigan\\
Ann Arbor, MI 48109-1043, USA}
\email{stephendebacker@umich.edu}

\author{David Schwein}
\address{University of Utah,
Salt Lake City, UT 84112, USA}
\email{david.schwein@utah.edu}

\author{Cheng-Chiang Tsai}
\address{Institute of Mathematics, Academia Sinica, 6F, Astronomy-Mathematics Building, No. 1,
Sec. 4, Roosevelt Road, Taipei, Taiwan \vskip.2cm
also Department of Applied Mathematics, National Sun Yat-Sen University, and Department of Mathematics, National Taiwan University}

\email{chchtsai@gate.sinica.edu.tw}

\begin{abstract} This expository note introduces a normalization of the indexing of the lower and upper numbering ramification subgroups of local class field theory.  We then look at how this  normalization interacts with base change for Langlands parameters.
\end{abstract}

 \makeatletter
\let\@wraptoccontribs\wraptoccontribs
\makeatother

  \subjclass[2020]{11S15, 22E50}


  \date{\today}

\maketitle

\setcounter{tocdepth}{1}
\tableofcontents

\section*{Introduction}
\noindent Suppose $F$ is a nonarchimedean local field, and $\bar{F}$ is a fixed separable closure of $F$.
In the standard treatment of ramification  groups, the valuation is chosen so that the lower numbering subgroups are indexed by integers.  In this expository note we fix the unique valuation $\val: \bar{F} \rightarrow \Q \cup \{ \infty \}$ for which $\val(F^\times) = \Z$, and we  use $\val$ to define a new indexing of the lower and upper numbering ramification groups.  This new  indexing seems to be better suited to base change.

\subsection*{Notation}

The residue field, $k_F$, of $F$ has  characteristic $p$.

We will assume everywhere, except in \Cref{sec:nongalois},
that finite separable extensions of $F$ are contained in~$\bar{F}$.
 For any finite separable extension $E/F$ we denote by $R_E$ the ring of integers of $E$ and by $W_E$ the Weil group of $E$.

Suppose $F \leq E \leq L$ is a tower of (finite) separable extensions with $L/E$ Galois.  We let $\Group(L/E)$ denote  the inertia subgroup of $\Gal(L/E)$.  Thus, $L^{\Group(L/E)}$ is the maximal unramified subextension of $L/E$.

\section{Ramification groups}

  \subsection{Lower numbering}

  \begin{definition}
     Suppose $L/E$ is finite Galois and $\varpi_L$ is a uniformizer for $L$.  \begin{itemize}
         \item 
   For $\sigma \in \Group(L/E)$
     set
\[\depth_{L/E}(\sigma) := \val \left( \dfrac{ \sigma(\varpi_L) - \varpi_L}{\varpi_L} \right).\]
  \item For $r \in \R_{\geq 0}$ we set
 \[\Group(L/E)_r := \{ \sigma \in \Group(L/E) \, : \, \depth_{L/E}(\sigma) \geq r \}. \]
  \end{itemize}
  \end{definition}

\begin{remark}
  Since $\sigma$ lies in the inertia subgroup, the definition of $\depth_{L/E}$ is independent of the choice of uniformizer. 
  Indeed, any other uniformizer is of the form $c\varpi_L$ for $c\in R_L^\times$, and the claim follows from the identity
  \[
  \sigma(c\varpi_L) - c\varpi_L = c(\sigma(\varpi_L)-\varpi_L) + \sigma(\varpi_L)(\sigma(c)-c).
  \]
  One often makes similar sorts of arguments in proving basic properties of the depth function.
\end{remark}

The definition of the ramification groups in lower numbering goes back at least to Hilbert \cite[\S44]{Hil97}.

  \begin{lemma} \label{depth-properties}
      The function $\depth_{L/E}$ satisfies the following properties:
      \begin{enumerate}
          \item $\depth_{L/E}(\sigma^{-1}) = \depth_{L/E}(\sigma)$.
          \item $\depth_{L/E}(\sigma\tau)\geq\min\bigl(\depth_{L/E}(\sigma),\depth_{L/E}(\tau)\bigr)$, with equality if $\sigma$ and $\tau$ have different depths.
          \item If $K/E$ is finite Galois with  $E \leq K \leq L$, then $\depth_{L/E}(\tau) = \depth_{L/K}(\tau)$ for all $\tau \in \Group(L/K)$. 
      \end{enumerate}
  \end{lemma}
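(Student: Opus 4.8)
The plan is to derive all three properties directly from the defining identity
\[
\depth_{L/E}(\sigma) = \val\bigl(\sigma(\varpi_L) - \varpi_L\bigr) - \val(\varpi_L),
\]
using a single basic input: every $\sigma \in \Gal(L/E)$ preserves $\val$ on $L$. This holds because $\val \circ \sigma$ is again a valuation on $L$ which agrees with $\val$ on $F^\times$ (as $\sigma$ fixes $F$), and the valuation on $L$ restricting to a given valuation on $F$ is unique; hence $\val \circ \sigma = \val$ on $L^\times$. Everything else is formal manipulation of the identity above together with the ultrametric inequality.

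For (1) I would apply $\sigma^{-1}$ inside the valuation: invariance of $\val$ gives $\val(\sigma(\varpi_L) - \varpi_L) = \val\bigl(\varpi_L - \sigma^{-1}(\varpi_L)\bigr)$, and subtracting $\val(\varpi_L)$ yields the claim. For (2) I would expand
\[
\sigma\tau(\varpi_L) - \varpi_L = \sigma\bigl(\tau(\varpi_L) - \varpi_L\bigr) + \bigl(\sigma(\varpi_L) - \varpi_L\bigr),
\]
observe that by invariance the first summand has valuation $\depth_{L/E}(\tau) + \val(\varpi_L)$ and the second has valuation $\depth_{L/E}(\sigma) + \val(\varpi_L)$, and then invoke the ultrametric inequality, which is moreover an equality precisely when the two summands have distinct valuations, that is, when $\sigma$ and $\tau$ have different depths. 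Subtracting $\val(\varpi_L)$ gives both assertions of (2).

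For (3) the key observation is that, with the present normalization, the valuation on $L$ is the single fixed $\val$, independent of whether $L$ is regarded over $E$ or over $K$. Since the inertia subgroup is exactly the kernel of the action on the residue field of $L$, we have $\Group(L/K) = \Gal(L/K) \cap \Group(L/E)$; in particular any $\tau \in \Group(L/K)$ also lies in $\Group(L/E)$, so both $\depth_{L/E}(\tau)$ and $\depth_{L/K}(\tau)$ are defined, and they are given by the identical expression $\val\bigl((\tau(\varpi_L) - \varpi_L)/\varpi_L\bigr)$. I do not expect any genuine obstacle: the only points needing care are the valuation-invariance input used in (1) and (2) and the identification of inertia subgroups in (3), both standard. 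This is also, arguably, the point of the normalization — in the classical convention, where $\val$ is rescaled so that a uniformizer of the base has valuation $1$, the analogue of (3) carries real content, whereas here it is essentially a tautology.
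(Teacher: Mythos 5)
Your proof is correct and matches the paper's approach exactly: part (1) uses that Galois automorphisms preserve the valuation, part (2) uses the same three-term identity together with the ultrametric inequality, and part (3) is a direct consequence of the single fixed normalization of $\val$. One small quibble: the ultrametric inequality is an equality \emph{if} the two summands have distinct valuations, not \emph{precisely when} --- equality can also hold when the valuations agree --- but since the lemma only asserts the one-directional implication, this slip is harmless.
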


  \begin{proof}
      The first part follows from the fact that automorphisms preserve valuations, the second part follows from the identity
      \[
      \sigma\tau(\varpi_L) - \varpi_L = \sigma(\tau(\varpi_L) - \varpi_L) + (\sigma(\varpi_L) - \varpi_L),
      \]
      and the third part is clear.
  \end{proof}

We define
\[
\Group(L/E)_{r+} := \bigcup_{s > r} \Group(L/E)_s
\]
and say that $t$ is a \emph{jump} in the (lower numbering) filtration if $\Group(L/E)_t\neq\Group(L/E)_{t+}$.

  \begin{lemma} \label{lem:3} Suppose $L/E$ is finite Galois.
  \begin{enumerate}
  \item  If $t$ is a jump in the lower numbering filtration then $t\in\frac{1}{e(L/F)}\cdot\Z$.
  \item   $\Group(L/E)_0 = \Group(L/E)$ and $\Group(L/E)_{r} = \Group(L/E)_{1/e(L/F)}$ for $0 < r \leq 1/e(L/F)$.
\item $\bigcap_{r \geq 0} \Group(L/E)_r$ is trivial.
\item If $K/E$ is finite Galois with  $E \leq K \leq L$, then 
$\Group(L/E)_r  \cap \Group(L/K)= \Group(L/K)_r$.
\item If $t \geq s \geq 0$, then $\Group(L/E)_t \trianglelefteq \Group(L/E)_s$.
\end{enumerate}
  \end{lemma}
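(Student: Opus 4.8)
The plan is to reduce every assertion to one elementary fact about valuations together with the two parts of \Cref{depth-properties}: since $\val(F^\times)=\Z$, we have $\val(L^\times)=\tfrac1{e(L/F)}\Z$ and $\val(\varpi_L)=\tfrac1{e(L/F)}$, so the smallest positive value of $\val$ on $L^\times$ is $\tfrac1{e(L/F)}$. I would first dispose of (1): because $\sigma(\varpi_L)-\varpi_L\in L$, its valuation lies in $\tfrac1{e(L/F)}\Z\cup\{\infty\}$, hence $\depth_{L/E}$ takes values in $\tfrac1{e(L/F)}\Z\cup\{\infty\}$. As $\Group(L/E)$ is finite there are only finitely many jumps, and each jump $t$ is the depth of some element: choosing $\sigma\in\Group(L/E)_t\setminus\Group(L/E)_{t+}$ forces $\depth_{L/E}(\sigma)\ge t$ and $\depth_{L/E}(\sigma)<s$ for every $s>t$, i.e.\ $\depth_{L/E}(\sigma)=t$. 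Therefore $t\in\tfrac1{e(L/F)}\Z$.

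For (2) the key point is that every $\sigma\in\Group(L/E)$ has $\depth_{L/E}(\sigma)\ge\tfrac1{e(L/F)}$: since $\sigma$ lies in the inertia group it acts trivially on the residue field, so $\sigma(\varpi_L)/\varpi_L-1$ is a non-unit, whence $\depth_{L/E}(\sigma)=\val\bigl(\sigma(\varpi_L)/\varpi_L-1\bigr)\ge\tfrac1{e(L/F)}$ (with the convention that the identity has depth $\infty$). Consequently $\Group(L/E)_0=\Group(L/E)$, the condition $\depth_{L/E}(\sigma)\ge r$ is automatic on $\Group(L/E)$ whenever $r\le\tfrac1{e(L/F)}$, and so both $\Group(L/E)_r$ for $0<r\le\tfrac1{e(L/F)}$ and $\Group(L/E)_{1/e(L/F)}$ coincide with $\Group(L/E)$. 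Part (3) then follows by noting that $\bigcap_{r\ge0}\Group(L/E)_r$ consists of those $\sigma$ with $\depth_{L/E}(\sigma)=\infty$, i.e.\ $\sigma(\varpi_L)=\varpi_L$; such a $\sigma$ also fixes the maximal unramified subextension $L^{\Group(L/E)}$, and since $L=L^{\Group(L/E)}(\varpi_L)$ it fixes $L$ and is trivial. Part (4) is immediate from \Cref{depth-properties}(3): for $\tau\in\Group(L/K)$ we have $\depth_{L/E}(\tau)=\depth_{L/K}(\tau)$, so $\tau$ satisfies $\depth_{L/E}(\tau)\ge r$ if and only if it satisfies $\depth_{L/K}(\tau)\ge r$.

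For (5) I would first record that each $\Group(L/E)_r$ is a subgroup: it contains the identity and is closed under inverses and products by parts (1) and (2) of \Cref{depth-properties}, so $\Group(L/E)_t\le\Group(L/E)_s$ whenever $t\ge s$. Normality follows from the stronger claim that $\depth_{L/E}$ is invariant under conjugation by any element of $\Gal(L/E)$. Indeed, for $\sigma,\tau\in\Gal(L/E)$ with $\sigma$ in the inertia group, set $u=\sigma^{-1}(\varpi_L)$, which is again a uniformizer of $L$; then
\[
\frac{(\sigma\tau\sigma^{-1})(\varpi_L)-\varpi_L}{\varpi_L}=\frac{\sigma(\tau(u))-\sigma(u)}{\sigma(u)}=\sigma\!\left(\frac{\tau(u)-u}{u}\right),
\]
and since automorphisms preserve $\val$ and $\depth_{L/E}$ is independent of the choice of uniformizer, this has valuation $\depth_{L/E}(\tau)$. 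Hence $\Group(L/E)_s$ normalizes $\Group(L/E)_t$. The two steps needing genuine (if modest) care are the identity $L=L^{\Group(L/E)}(\varpi_L)$ in (3), which I would justify by a ramification-index count showing $[L^{\Group(L/E)}(\varpi_L):L^{\Group(L/E)}]\ge e(L/E)=[L:L^{\Group(L/E)}]$, and the choice of the substitute uniformizer $u=\sigma^{-1}(\varpi_L)$ in (5); everything else is bookkeeping with the valuation and the already-established properties of $\depth_{L/E}$.
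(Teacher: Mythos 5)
Your handling of parts (1), (3), (4), and (5) is sound (and your conjugation computation for (5) is a clean way to get normality in all of $\Gal(L/E)$, not merely in $\Group(L/E)_s$). However, the argument for part (2) contains a genuine error.

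You claim that every $\sigma\in\Group(L/E)$ has $\depth_{L/E}(\sigma)\ge\tfrac{1}{e(L/F)}$, and hence that $\Group(L/E)_r=\Group(L/E)$ for all $0<r\le\tfrac{1}{e(L/F)}$. This is false, and so is the intermediate inference: $\sigma$ acting trivially on $k_L$ only tells you $\sigma(x)-x\in\fm_L$ for $x\in R_L$, so putting $x=\varpi_L$ yields $\sigma(\varpi_L)-\varpi_L\in\fm_L$ and therefore $\depth_{L/E}(\sigma)\ge 0$; it does \emph{not} give $\sigma(\varpi_L)-\varpi_L\in\fm_L^2$. Concretely, take $F=E=\Q_3$, $L=\Q_3(\sqrt 3)$, $\varpi_L=\sqrt 3$, and $\sigma$ the nontrivial automorphism. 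Then $(\sigma(\varpi_L)-\varpi_L)/\varpi_L=-2$ is a unit, so $\depth_{L/E}(\sigma)=0$ and $\Group(L/E)_{1/2}=\{1\}\neq\Group(L/E)$. Indeed, for any nontrivial tame extension $\Group(L/E)_{1/e(L/F)}$ is a proper subgroup of $\Group(L/E)$.

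The statement you actually need is weaker and follows from what you already established in part~(1). First, $\depth_{L/E}(\sigma)\ge 0$ for every $\sigma\in\Group(L/E)$ (because $\sigma(\varpi_L)-\varpi_L\in\fm_L$), giving $\Group(L/E)_0=\Group(L/E)$. Second, since $\depth_{L/E}$ takes values in $\tfrac{1}{e(L/F)}\Z_{\ge0}\cup\{\infty\}$, for any $0<r\le\tfrac{1}{e(L/F)}$ the condition $\depth_{L/E}(\sigma)\ge r$ is equivalent to $\depth_{L/E}(\sigma)\ge\tfrac{1}{e(L/F)}$, whence $\Group(L/E)_r=\Group(L/E)_{1/e(L/F)}$ — which is exactly what the lemma claims, without asserting that this group is all of $\Group(L/E)$. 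With this repair your write-up matches the spirit of the paper's (much terser) proof, which simply notes the first four parts follow directly from the definitions and the fifth from \Cref{depth-properties}.
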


  \begin{proof}
      The first four parts follow directly from the definitions while the last part follows from \Cref{depth-properties}.
  \end{proof}

Given $t \geq s \geq 0$, we define the group
\[
\Group(L/E)_{s:t} := \Group(L/E)_s/\Group(L/E)_t
\] 
        We denote by $\ell(L/E)$ the largest (that is, deepest) jump in the filtration.  That is, $\ell(L/E) = \inf \{ r \in \R_{\geq 0} \colon \text{$\Group(L/E)_r$ is trivial}\}$.
        The $\ell$ here is short for ``lower numbering''; later, we define a similar quantity $u$ for the ``upper numbering''.

\begin{lemma} \label{lem:lemma5} Suppose $r \in \R_{\geq 0}$.   The map $\Group(L/E)_r \rightarrow (L^\times)_r$ that sends $\sigma$ to $\sigma(\varpi_L)/\varpi_L$ induces an injective group homomorphism $\theta_r \colon \Group(L/E)_{r:r+} \hookrightarrow (L^\times)_{r:r+}$.  Additionally,
\begin{enumerate}
    \item  $\theta_r$ is independent of the choice of uniformizer $\varpi_L$ for $L$.
    \item If $\sigma \in \Group(L/E)_{0}$ and $\tau \in \Group(L/E)_{r} \setminus \Group(L/E)_{r+}$ with $r > 0$, then
    \[ \theta_r(\sigma \tau \sigma^{-1}) = \theta_0(\sigma)^{\lceil r \cdot e(L/F) \rceil} \theta_r(\tau).\]
    (Here we are thinking of $\theta_r$ as a group homomorphism $\Group(L/E)_r \rightarrow k_L$ in the natural way.)
    \item If $s,t > 0$, then $[\Group(L/E)_t,\Group(L/E)_s] \leq \Group(L/E)_{(t+s)+}$
    \item If $\Group(L/E)_{t:t+}$ and $\Group(L/E)_{s:s+}$ are both nontrivial, then $p$ divides $e(L/F) \cdot (t-s)$.
\end{enumerate}
\end{lemma}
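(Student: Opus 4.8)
My plan is to run the classical development of the lower-numbering ramification groups (as in Serre, \emph{Corps locaux}, Ch.~IV) while keeping track of the normalized valuation $\val$, and the one technical input I would isolate at the outset is the estimate
\[
\val\bigl(\sigma(x)-x\bigr)\ \geq\ \depth_{L/E}(\sigma)+\val(x)\qquad\bigl(\sigma\in\Group(L/E),\ x\in L^\times\bigr),
\]
together with its refinement $\val(\sigma(x)-x)\geq\depth_{L/E}(\sigma)+\tfrac1{e(L/F)}$ for $x\in R_L$. Both are read off from the definition of $\depth_{L/E}$: if $L_0=L^{\Group(L/E)}$ is the maximal unramified subextension then $R_L=R_{L_0}[\varpi_L]$ and $\sigma$ fixes $R_{L_0}$, so for $x=\sum_i c_i\varpi_L^i$ one has $\sigma(x)-x=\sum_{i\geq1}c_i(\sigma(\varpi_L)^i-\varpi_L^i)$, and each summand has valuation $\geq\val(c_i)+\depth_{L/E}(\sigma)+\tfrac i{e(L/F)}$; the general case follows after clearing a power of $\varpi_L$. (At $r=0$ one uses instead that $\Group(L/E)$ acts trivially on $k_L=k_{L_0}$.)

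Given this, part~(1) is the crux of the preamble. If $\varpi_L'=c\varpi_L$ with $c\in R_L^\times$ then $\sigma(\varpi_L')/\varpi_L'=(\sigma(c)/c)\cdot(\sigma(\varpi_L)/\varpi_L)$, and for $\sigma$ of depth exactly $r$ the factor $\sigma(c)/c=1+(\sigma(c)-c)/c$ lies in $(L^\times)_{r+}$ by the refined estimate (for $r>0$) or in $1+\fm_L$ (for $r=0$), so it does not affect the class in $(L^\times)_{r:r+}$. The other two claims of the preamble are then formal: $\sigma\tau(\varpi_L)/\varpi_L=(\sigma(\tau\varpi_L)/\tau\varpi_L)\cdot(\tau\varpi_L/\varpi_L)$ and $\tau\varpi_L$ is again a uniformizer, so part~(1) gives multiplicativity of $\theta_r$ modulo $(L^\times)_{r+}$; and $\theta_r(\sigma)\in(L^\times)_{r+}$ means precisely $\depth_{L/E}(\sigma)>r$, i.e.\ $\sigma\in\Group(L/E)_{r+}$, which is injectivity.

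For part~(2): the hypothesis $\tau\in\Group(L/E)_r\setminus\Group(L/E)_{r+}$ forces $r$ to be a jump, so $r\cdot e(L/F)\in\Z$ by \Cref{lem:3} and the ceiling is harmless. Computing $\theta_r(\sigma\tau\sigma^{-1})$ with the uniformizer $\varpi_L$ but writing $\sigma\tau\sigma^{-1}(\varpi_L)=\sigma(\tau(y))$ for $y:=\sigma^{-1}(\varpi_L)$ (again a uniformizer), and using that $\sigma$ preserves valuations, carries $y$ to $\varpi_L$, and is trivial on $k_L$, one expresses $\theta_r(\sigma\tau\sigma^{-1})$ through the leading coefficient of $\tau(y)/y-1$; comparing $y$ with $\varpi_L$ (they differ by a unit reducing to $\theta_0(\sigma)^{-1}$) introduces exactly the factor $\theta_0(\sigma)^{r\cdot e(L/F)}$. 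I would then prove part~(4) before part~(3), since (3) relies on it: if $s=t$ there is nothing to prove, and otherwise — after renaming so that $t>0$ — $\Group(L/E)_{t:t+}$ is a nontrivial finite subgroup of $(L^\times)_{t:t+}$, which for $t>0$ is a $p$-group; hence $p$ divides $|\Group(L/E)_{t:t+}|$, which divides $|\Group(L/E)|=e(L/E)$, which divides $e(L/F)$, so $p\mid e(L/F)(t-s)$.

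Finally part~(3). For $\sigma\in\Group(L/E)_t$, $\tau\in\Group(L/E)_s$ with $s,t>0$, we may assume both are nontrivial, with depths $t'\geq t$, $s'\geq s$ (positive jumps). Writing $a_\mu:=\mu(\varpi_L)-\varpi_L$ and using $a_{\mu^{-1}}=-\mu^{-1}(a_\mu)$, a rearrangement gives
\[
a_{[\sigma,\tau]}\ =\ -\bigl(\sigma\tau\sigma^{-1}(a_\sigma)-a_\sigma\bigr)-\sigma\bigl(\tau\sigma^{-1}\tau^{-1}(a_\tau)-a_\tau\bigr),
\]
and since $\depth_{L/E}(\sigma\tau\sigma^{-1})=\depth_{L/E}(\tau)$ (independence of uniformizer), the estimate from the first paragraph already yields $\depth_{L/E}([\sigma,\tau])\geq s'+t'$. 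The point is to make this strict: expanding each of the two differences one order further, and using part~(2) to identify $\theta_{s'}(\sigma\tau\sigma^{-1})=\theta_{s'}(\tau)$ and $\theta_{t'}(\tau\sigma^{-1}\tau^{-1})=-\theta_{t'}(\sigma)$ (valid as $\sigma,\tau\in\Group(L/E)_{0+}$ have trivial $\theta_0$), the coefficient of $\varpi_L^{(s'+t')e(L/F)+1}$ in $a_{[\sigma,\tau]}$ reduces in $k_L$ to
\[
\bigl(\overline{s'e(L/F)+1}-\overline{t'e(L/F)+1}\bigr)\,\theta_{t'}(\sigma)\theta_{s'}(\tau)\ =\ \overline{e(L/F)(s'-t')}\cdot\theta_{t'}(\sigma)\theta_{s'}(\tau).
\]
By part~(4) this vanishes, so $\depth_{L/E}([\sigma,\tau])>s'+t'\geq s+t$, i.e.\ $[\sigma,\tau]\in\Group(L/E)_{(s+t)+}$; as the latter is a subgroup (\Cref{depth-properties}), part~(3) follows. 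I expect the main obstacle to be the bookkeeping in this last computation: one must push the expansion of $[\sigma,\tau](\varpi_L)$ exactly one order past what the crude valuation estimate controls, and it is precisely the surviving coefficient $e(L/F)(s'-t')\bmod p$ — which vanishes by part~(4) — that couples (3) to (4).
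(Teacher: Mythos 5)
The paper does not actually prove this lemma: it simply cites Serre, \emph{Corps Locaux} IV \S2, Propositions 6, 7, 9, 10, 11, together with the indexing dictionary $G_i = \Group(L/E)_{i/e(L/F)}$. You instead reconstruct that development, which is worthwhile; your preamble, part~(1), and part~(2) are correct and closely track Serre, and the coefficient computation in part~(3) (including the identity for $a_{[\sigma,\tau]}$ and the resulting leading term $\overline{e(L/F)(s'-t')}\,\theta_{t'}(\sigma)\theta_{s'}(\tau)$) is right.

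The genuine gap is in part~(4), and it propagates to~(3). You argue that $\Group(L/E)_{t:t+}$ is a nontrivial subgroup of the $p$-group $(L^\times)_{t:t+}$, so $p \mid \lvert\Group(L/E)_{t:t+}\rvert \mid e(L/E) \mid e(L/F)$, ``so $p\mid e(L/F)(t-s)$.'' The last step is a non sequitur: $t-s$ lies in $\tfrac{1}{e(L/F)}\Z$, not in $\Z$, so $p\mid e(L/F)$ gives no information about $p\mid e(L/F)(t-s)$. (Indeed $p\mid e(L/F)$ holds whenever the wild inertia is nontrivial, whereas Serre's Proposition~11 is a genuine congruence among the individual jump indices.) A separate issue, but worth flagging: Proposition~11 requires the jumps to be $\geq 1$, i.e.\ $s,t>0$ in the paper's normalization; the paper's statement of~(4) omits this, and with $s=0$ the claim fails already for $\Q_p(\zeta_{p^2})/\Q_p$, where $0$ and $1/p$ are both jumps but $e(L/F)(1/p - 0)=p-1$. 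Finally, since your proof of~(3) invokes~(4) to annihilate the coefficient $\overline{e(L/F)(s'-t')}\,\theta_{t'}(\sigma)\theta_{s'}(\tau)$, (3) is also left unestablished. The coupling you observed is real, but in Serre the dependence runs the other way: one first shows $[\Group(L/E)_t,\Group(L/E)_s]\leq\Group(L/E)_{(s+t)+}$ by an independent argument, and then~(4) follows because the forced vanishing of that coefficient, combined with $\theta_{t'}(\sigma),\theta_{s'}(\tau)\neq 0$, yields $\overline{e(L/F)(s'-t')}=0$ in $k_L$. What is missing from your proposal is precisely that independent argument.
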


\begin{proof}
    This is proved in~\cite[IV.2]{serre:local}.  Specifically, we are using Propositions~6, 7, 9, 10, and 11 of \emph{loc. cit.}   To translate, note that
the group $G_i$ of \emph{loc. cit.} is $\Group(L/K)_{i/e(L/F)}$ in this note.  
    \end{proof}

\begin{remark}  Since $[\Group(L/E)_r,\Group(L/E)_r] \leq \Group(L/E)_{r+}$ for all $r \geq 0$, we see that $\Group(L/E)$ is solvable.   We also see that $\Group(L/E)_{0:0^+}$ is a cyclic group of order prime to $p$ and each $\Group(L/E)_{r:r^+}$ is a direct sum of cyclic groups of order $p$. 
\end{remark}

\begin{remark}   As follows from, for example,~\cite[IV.1 Lemma 1]{serre:local}, if $\alpha_{L/E}$ is a generator for $R_L$ over $R_E$, then 
\[\depth_{L/E}(\sigma) = \val \left(\varpi_L^{-1} \cdot (\sigma(\alpha_{L/E}) - \alpha_{L/E} )\right).\]
\end{remark}

\begin{exercise}
    If $r>0$, show that the map $\theta_r$ of Lemma~\ref{lem:lemma5} can be extended to an injective group homomorphism $\Group(L/E)_{r:2r} \hookrightarrow (L^\times)_{r:2r}$.  Show that, in general, this map depends on the choice of $\varpi_L$.
\end{exercise}

\subsection{Upper numbering}
\hspace{-.6em}\footnote{The treatment  of the material in this section was inspired by Wotjek Wawr\'ow's note \emph{Higher Ramification Groups}.}
Suppose $L/E$ is finite Galois and $E \leq K \leq L$ with $K/E$ Galois.  We now want to look at $\Group(K/E) = \Group(L/E)/\Group(L/K)$.  
  For any $\sigma \in \Group(L/E)$ we define $\bar{\sigma} = \sigma \Group(L/K) \in \Group(K/E)$.
  
 \begin{lemma} \label{lem:firstformula} If $\sigma \in \Group(L/E)$, then
\begin{equation} \tag{$\clubsuit$}  \label{descent:formula}
\depth_{K/E} (\bar{\sigma}) = \sum_{\tau \in \Group(L/K)} \depth_{L/E}(\sigma \tau).
\end{equation}
\end{lemma}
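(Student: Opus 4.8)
The plan is to convert \eqref{descent:formula} into a single valuation identity about generators of the relevant rings of integers, and then establish that identity by two polynomial divisibility arguments. First dispose of the degenerate case: if $\bar\sigma = 1$, then $\sigma \in \Group(L/K)$, the identity element occurs among the $\sigma\tau$, and both sides of \eqref{descent:formula} equal $+\infty$; so assume $\bar\sigma \neq 1$, in particular $E \neq K$. Next, invoking \Cref{depth-properties}(3) to enlarge the base from $E$ to the maximal unramified subextension $M \defeq L^{\Group(L/E)}$ of $L/E$ (and replacing $K$ by $KM$), one reduces to the case in which $L/E$ — hence also $K/E$ and $L/K$ — is totally ramified, so that $\Group(\cdot) = \Gal(\cdot)$ throughout; here one uses that $KM/K$ is unramified, so a uniformizer of $K$ is one of $KM$ and the depths relevant to the two towers agree.

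Assume now $L/E$ is totally ramified. Fix a generator $\beta$ of $R_K$ over $R_E$ and a generator $x$ of $R_L$ over $R_E$; then $L = K(x)$ and $g(X) \defeq \prod_{\tau \in \Gal(L/K)}(X - \tau x)$ is the minimal polynomial of $x$ over $K$, monic with coefficients in $R_K$. Using the identities $\depth_{K/E}(\bar\sigma) = \val(\varpi_K^{-1}(\sigma\beta - \beta))$ and $\depth_{L/E}(\sigma\tau) = \val(\varpi_L^{-1}(\sigma\tau(x) - x))$ (the generator description of the depth recalled above), together with $\#\Gal(L/K) = e(L/K)$, $\val(\varpi_L) = 1/e(L/F)$, $\val(\varpi_K) = 1/e(K/F)$ and $e(L/F) = e(L/K)\,e(K/F)$, the uniformizer terms cancel and \eqref{descent:formula} becomes equivalent to
\[
\val(\sigma\beta - \beta) = \val\!\Bigl(\textstyle\prod_{\tau \in \Gal(L/K)}(\sigma\tau(x) - x)\Bigr).
\]
Writing $g^\sigma$ for the polynomial obtained by applying $\sigma$ to the coefficients of $g$, one has $g^\sigma(X) = \prod_{\tau \in \Gal(L/K)}(X - \sigma\tau(x))$, hence $\prod_{\tau}(\sigma\tau(x) - x) = \pm\, g^\sigma(x)$; so it suffices to show $\val(\sigma\beta - \beta) = \val(g^\sigma(x))$.

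For $\val(\sigma\beta - \beta) \geq \val(g^\sigma(x))$: since $\beta \in K$ is fixed by $\Gal(L/K)$ and $\beta = h(x)$ for some $h \in R_E[X]$, each $\tau x$ (for $\tau \in \Gal(L/K)$) is a root of $h(X) - \beta$, so $g \mid h - \beta$ in $R_K[X]$, say $h - \beta = g\,q$ with $q \in R_K[X]$; applying $\sigma$, which fixes $R_E$ so that $h^\sigma = h$, and evaluating at $x$ gives $\beta - \sigma\beta = g^\sigma(x)\,q^\sigma(x)$ with $q^\sigma(x) \in R_L$. For the reverse inequality: $g(x) = 0$ gives $g^\sigma(x) = (g^\sigma - g)(x) = \sum_i (\sigma(c_i) - c_i)\,x^i$, where $g = \sum_i c_i X^i$ with $c_i \in R_K = R_E[\beta]$; writing $c_i = P_i(\beta)$ with $P_i \in R_E[X]$ shows that $\sigma\beta - \beta$ divides each $\sigma(c_i) - c_i$, and $\val(x) \geq 0$ then yields $\val(g^\sigma(x)) \geq \val(\sigma\beta - \beta)$. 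Combining the two inequalities proves the displayed identity, and with it \eqref{descent:formula}.

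I expect the pair of divisibility steps — in particular, obtaining \emph{both} directions rather than only the easy one — to be the main obstacle: one direction controls $g^\sigma(x)$ through the coefficient differences $\sigma(c_i) - c_i$, the other factors $\sigma\beta - \beta$ through $g^\sigma(x)$, and each relies on a different consequence of how $x$, $\beta$ and the fixed field $K = L^{\Gal(L/K)}$ are related. This is in essence the classical Herbrand computation; alternatively one could deduce the statement from the corresponding formula in \cite[IV.\S3]{serre:local} after matching up the normalizations.
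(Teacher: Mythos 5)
Your proof is correct and follows the same overall strategy as the paper's: dispose of the degenerate case $\bar\sigma=1$, reduce to the totally ramified situation by passing to the maximal unramified subextension, convert the depth identity into a valuation identity between $\sigma\beta-\beta$ and $\prod_\tau(\sigma\tau(x)-x)$, and then prove that identity. The only real difference is in the last step: where the paper simply cites Tate's argument as presented in Serre's \emph{Local Fields} (IV.\S1, Proposition~3), you give a self-contained proof via the two divisibility directions — $g\mid h-\beta$ by uniqueness of the minimal polynomial, and $\sigma\beta-\beta\mid\sigma(c_i)-c_i$ from $c_i\in R_E[\beta]$ — which is exactly the content of the cited proposition. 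Both reductions to the totally ramified case amount to the same thing: your $M=L^{\Group(L/E)}$ is the paper's $E''$, and $KM$ equals the paper's $K'=L^{\Group(L/K)}$ since $\Group(L/K)=\Group(L/E)\cap\Gal(L/K)$. The minor loose end in your write-up — that $\depth_{K/E}(\bar\sigma)=\depth_{KM/M}(\bar\sigma)$ under the base change — is handled correctly by your observation that $KM/K$ is unramified so a uniformizer of $K$ serves for $KM$, together with $\Group(L/K)=\Group(L/KM)$; these should probably be stated explicitly, but the argument is sound.
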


     \begin{proof}       
     First, assume that $L/E$ is totally ramified. If $\bar{\sigma}$ is the identity in $\Group(K/E)$, then both sides of (\ref{descent:formula}) are infinity. Suppose now that $\bar{\sigma}$ is not the identity. Then \(\depth_{K/E} (\bar{\sigma})\) is given by
          \[  
	  \val \left(\bar{\sigma}(\varpi_K) - \varpi_K \right)-\val(\varpi_K)=  \val \left( \sigma(\varpi_K) - \varpi_K  \right)-\val(\varpi_K).
	  \]
Since
         \[
	 \val(\varpi_K)=|\Group(L/K)|\cdot\val(\varpi_L)
         \]
we need to show that 
\[
\sigma(\varpi_K) - \varpi_K  \text{\,  and } \prod_{\tau \in \Group(L/K)} (\sigma \tau (\varpi_L) - \varpi_L)  
\]
have the same valuation, i.e., they generate the same ideal in $R_L$. For this, we may use Tate's proof of this fact, which applies because $\Gal(E/F) = \Group(E/F)$.  Tate's proof may be found in the proof of Proposition 3 on page 63 of Serre's \emph{Local Fields}~\cite{serre:local}.

\InsertBoxR{0}{%
\begin{tikzcd}[arrows={dash},column sep=0.2em,row sep=tiny,shorten=-1mm,
ampersand replacement=\&]
L \arrow[dd,shorten=0mm] \drar{\tn{ram}} \& \\
\& K' \dlar{\tn{unr}} \drar{\tn{ram}} \& \\
K \arrow[dd,shorten=0mm] \drar{\tn{ram}} \&\& E'' \dlar{\tn{unr}} \\
\& E' \dlar{\tn{unr}} \\
E
\end{tikzcd}
}[4]
When $L/E$ is not  totally ramified,
we reduce to the case where $L/E$ is totally ramified as follows.
Let $E'$ be the maximal unramified subextension of $K/E$,
 let $K'$ be the maximal unramified subextension of $L/K$, and let $E''$ be the maximal unramified subextension of $K'/E'$,
which is also the maximal unramified subextension of~$L/E$.
We have displayed the resulting tower of extensions on the right,
labeling the unramified and totally ramified inclusions
with ``unr'' and ``ram'', respectively.
Then $\Group(K/E) = \Group(K/E')$ while restriction to $K$
induces an isomorphism $\Group(K'/E'')\simeq\Group(K/E')$.
Using this identification, together with the fact that
any uniformizer for $K$ is also a uniformizer for~$K'$,
we see that
\[
\depth_{K/E}(\bar\sigma) = \depth_{K/E'}(\bar\sigma)
= \depth_{K'/E''}(\bar\sigma).
\]
Moreover, 
\[
\sum_{\tau\in\Group(L/K)} \depth_{L/K}(\sigma\tau)
= \sum_{\tau\in\Group(L/K')} \depth_{L/K'}(\sigma\tau)
\]
because $\Group(L/K) = \Group(L/K')$.
So we may replace $L/K/E$ by $L/K'/E''$,
reducing to the previous case.
\end{proof}

\begin{definition} \label{relative-depth}
 For $\sigma \in \Group(L/E)$ we let $\depth_{L/E}^{L/K}(\sigma)$ denote the maximal depth attained by an element of the coset $\sigma \Group(L/K)$.  That is,
 \[\depth_{L/E}^{L/K}(\sigma) = \max \{ \depth_{L/E}(\sigma \tau) \, | \, \tau \in \Group(L/K) \}. \]  
\end{definition}

\begin{lemma} \label{lem:six}
 If $\sigma \in \Group(L/E)$, 
 then
\[\depth_{K/E}(\bar{\sigma}) =   \sum_{\tau \in \Group(L/K)} \min( \depth_{L/E}(\tau) , \depth^{L/K}_{L/E} (\sigma)). \]
\end{lemma}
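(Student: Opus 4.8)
The plan is to combine the formula~$(\clubsuit)$ of \Cref{lem:firstformula} with a pointwise identity for $\depth_{L/E}(\sigma\tau)$ and then reindex the resulting sum. Write $m := \depth^{L/K}_{L/E}(\sigma)$. If $m = \infty$, then $\sigma\tau_0 = 1$ for some $\tau_0 \in \Group(L/K)$, so $\sigma \in \Group(L/K)$ and $\bar\sigma$ is trivial; both sides of the asserted identity are then $\infty$ (on the right because the term $\tau = 1$ contributes $\infty$), and we are done. So assume $m < \infty$ and fix $\tau_0 \in \Group(L/K)$ with $\depth_{L/E}(\sigma\tau_0) = m$, which exists by \Cref{relative-depth}.

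Next I would establish, for every $\tau \in \Group(L/K)$, the identity
\[\depth_{L/E}(\sigma\tau) = \min\bigl(\depth_{L/E}(\tau_0^{-1}\tau),\, m\bigr).\]
Writing $\sigma\tau = (\sigma\tau_0)(\tau_0^{-1}\tau)$ and applying \Cref{depth-properties}(2) to the factors $\sigma\tau_0$ (of depth $m$) and $\tau_0^{-1}\tau$, we get $\depth_{L/E}(\sigma\tau) \geq \min(m, \depth_{L/E}(\tau_0^{-1}\tau))$. For the reverse inequality there are two cases. If $\depth_{L/E}(\tau_0^{-1}\tau) < m$, then the two factors have distinct depths, so \Cref{depth-properties}(2) gives equality in the minimum, which is what we want. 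If $\depth_{L/E}(\tau_0^{-1}\tau) \geq m$, then $\min(m, \depth_{L/E}(\tau_0^{-1}\tau)) = m$; on the other hand $\sigma\tau$ belongs to the coset $\sigma\Group(L/K)$, whose elements have depth at most $m$ by the choice of $\tau_0$, so $\depth_{L/E}(\sigma\tau) \leq m$ and hence equality holds here too.

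Finally, summing the pointwise identity over $\tau \in \Group(L/K)$ and using that $\tau \mapsto \tau_0^{-1}\tau$ permutes $\Group(L/K)$, I obtain
\[\sum_{\tau \in \Group(L/K)} \depth_{L/E}(\sigma\tau) = \sum_{\tau \in \Group(L/K)} \min\bigl(\depth_{L/E}(\tau),\, m\bigr),\]
and the left-hand side equals $\depth_{K/E}(\bar\sigma)$ by \Cref{lem:firstformula}. The one step requiring genuine care is the second case of the pointwise identity, where the maximality built into \Cref{relative-depth} is exactly what forces the upper bound $\depth_{L/E}(\sigma\tau) \le m$; without using the relative depth (for instance, replacing $m$ by $\depth_{L/E}(\sigma)$) the identity would fail.
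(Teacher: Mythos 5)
Your proof is correct and follows essentially the same strategy as the paper's: both establish the pointwise identity $\depth_{L/E}(\sigma\tau) = \min(\depth_{L/E}(\tau'), m)$ via the ultrametric inequality of \Cref{depth-properties}(2) together with the maximality built into \Cref{relative-depth}, and then sum using \Cref{lem:firstformula}. The only cosmetic difference is that the paper reduces to the case where $\sigma$ itself attains the maximal depth in its coset (``without loss of generality''), whereas you fix the maximizing $\tau_0$ and reindex the sum via $\tau \mapsto \tau_0^{-1}\tau$ at the end; your version is in fact a bit more explicit in handling the case $\depth_{L/E}(\tau_0^{-1}\tau)\geq m$ and the degenerate case $m=\infty$.
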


\begin{remark}
    Here and elsewhere it is useful to remember that for all $\tau \in \Group(L/K)$ we have $\depth_{L/E}(\tau)  = \depth_{L/K}(\tau)$.
\end{remark}

\begin{proof}
We may assume without loss of generality that 
 $\depth_{L/E}(\sigma) =  \depth_{L/E}^{L/K}(\sigma)$.
  Suppose $\tau \in \Group(L/K)$. 
Either $\depth_{L/E}(\sigma \tau) =  \depth_{L/E}^{L/K}(\sigma)$ 
or  $\depth_{L/E}(\sigma \tau) < \depth_{L/E}^{L/K}(\sigma)$.  In the latter case $\depth_{L/E}(\sigma \tau) = \depth_{L/E}(\tau)$,  so  
\[ \depth_{L/E}(\sigma \tau) = \min(\depth_{L/E}(\tau), \depth_{L/E}^{L/K}(\sigma)). \]
The result now follows from Lemma~\ref{lem:firstformula}.
\end{proof}

 \begin{lemma} \label{lem:seven}  Suppose $r \geq 0$.  We have $\Group(L/E)_r \Group(L/K)/\Group(L/K) = \Group(K/E)_s$ where 
\[s = \sum_{\tau \in \Group(L/K)} \min( \depth_{L/E}(\tau) , r). \]
\end{lemma}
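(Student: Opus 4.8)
The plan is to compute the image subgroup $\Group(L/E)_r\Group(L/K)/\Group(L/K)$ directly in terms of the depth function on $\Group(K/E)$, using \Cref{lem:six} as the main bridge. First I would observe that an element $\bar\sigma\in\Group(K/E)$ lies in the image of $\Group(L/E)_r$ if and only if the coset $\sigma\Group(L/K)$ contains an element of depth $\geq r$, i.e. if and only if $\depth_{L/E}^{L/K}(\sigma)\geq r$. So the task reduces to showing that $\depth_{L/E}^{L/K}(\sigma)\geq r$ is equivalent to $\depth_{K/E}(\bar\sigma)\geq s$ for the stated value of $s$. For this I would fix the function
\[
\phi(x) := \sum_{\tau\in\Group(L/K)} \min(\depth_{L/E}(\tau),x),
\]
so that $s=\phi(r)$, and note that \Cref{lem:six} says precisely $\depth_{K/E}(\bar\sigma)=\phi\bigl(\depth_{L/E}^{L/K}(\sigma)\bigr)$.

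The key point is then that $\phi$ is a nondecreasing, continuous, piecewise-linear function on $\R_{\geq 0}$, with slope $1$ on the interval to the right of the deepest jump of $\Group(L/K)$ (where all the terms $\min(\depth_{L/E}(\tau),x)$ have ``turned on'' except $\tau=\mathrm{id}$, which contributes $x$). On that terminal ray $\phi$ is strictly increasing, hence injective. Since for any $\sigma$ the value $\depth_{L/E}^{L/K}(\sigma)$ is attained by some $\sigma\tau_0$ and, when $\bar\sigma\neq e$, is at most $\ell(L/K)$ only if $\sigma\tau_0\in\Group(L/K)$ — which forces $\bar\sigma=e$ — we get that whenever $\bar\sigma\neq e$ the relative depth $\depth_{L/E}^{L/K}(\sigma)$ is bounded below and the relation $\depth_{K/E}(\bar\sigma)=\phi(\depth_{L/E}^{L/K}(\sigma))$ lets us recover $\depth_{L/E}^{L/K}(\sigma)$ monotonically from $\depth_{K/E}(\bar\sigma)$. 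Concretely: $\depth_{L/E}^{L/K}(\sigma)\geq r$ iff $\phi(\depth_{L/E}^{L/K}(\sigma))\geq\phi(r)$, using monotonicity of $\phi$ for the forward direction and injectivity of $\phi$ on the relevant range (or a direct comparison of the sums) for the reverse. This gives $\bar\sigma\in\Group(K/E)_s$ iff $\bar\sigma$ is in the image of $\Group(L/E)_r$, which is the claim.

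The step I expect to be the main obstacle is handling the boundary behavior of $\phi$ carefully, in particular the degenerate cases: when $r$ is large enough that $\Group(L/E)_r\subseteq\Group(L/K)$ (so the image is trivial and correspondingly $s=\sum_\tau\depth_{L/E}(\tau)=\infty$ if $\Group(L/K)$ is nontrivial, or the sum is interpreted so that $\Group(K/E)_s$ is trivial), and when $r=0$ (where one must check $s=0$ and that the image is all of $\Group(K/E)$, which is consistent since $\min(\depth_{L/E}(\tau),0)=0$). One must also be slightly careful that $\phi$ need not be strictly increasing on all of $[0,\infty)$ — it is constant on $[\ell(L/K),\infty)$ away from the $\mathrm{id}$ term only if $|\Group(L/K)|=1$ — so the injectivity argument should be phrased as: distinct candidate values of $\depth_{L/E}^{L/K}(\sigma)$ for $\bar\sigma\neq e$ lie in a range where $\phi$ has positive slope, because such values are always of the form $\depth_{L/E}(\sigma\tau_0)$ with $\sigma\tau_0\notin\Group(L/K)$. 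Once these edge cases are pinned down, the rest is a routine monotonicity comparison of the two finite sums.
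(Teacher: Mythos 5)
Your route is the paper's route---reduce the claim to the equivalence $\depth_{L/E}^{L/K}(\sigma)\geq r \Leftrightarrow \depth_{K/E}(\bar\sigma)\geq s$, apply \Cref{lem:six} to write $\depth_{K/E}(\bar\sigma)=\phi(\depth_{L/E}^{L/K}(\sigma))$ where $\phi(x)=\sum_{\tau\in\Group(L/K)}\min(\depth_{L/E}(\tau),x)$, and conclude by monotonicity of $\phi$---but the monotonicity analysis, which is the whole point, goes wrong, and your final paragraph is working around a problem that does not exist.

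The observation you need (and which the paper makes in one sentence) is that $\depth_{L/E}(1)=\infty$, so the $\tau=\mathrm{id}$ term of $\phi$ is $\min(\infty,x)=x$. Hence $\phi(x)=x+\sum_{\tau\neq 1}\min(\depth_{L/E}(\tau),x)$, and since each remaining summand is nondecreasing, $\phi$ is strictly increasing on \emph{all} of $\R_{\geq 0}$ (equivalently, wherever differentiable, $\phi'(x)=|\Group(L/K)_x|\geq 1$). Your claim that ``$\phi$ need not be strictly increasing on all of $[0,\infty)$'' is simply false, and the salvage attempt is doubly off: the relative depth $\depth_{L/E}^{L/K}(\sigma)$ for $\bar\sigma\neq e$ is \emph{not} bounded below by $\ell(L/K)$ (it can be arbitrarily small), so that restriction would not give you what you want even if it were needed. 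Your remark that $s=\infty$ when $r$ is large is also wrong for the same reason: the $\mathrm{id}$ term contributes $r$, not $\infty$, so $s=\phi(r)$ is finite for finite $r$. Once you use the full strict monotonicity of $\phi$, there is no case analysis at all: $\depth_{L/E}^{L/K}(\sigma)\geq r \Leftrightarrow \phi(\depth_{L/E}^{L/K}(\sigma))\geq\phi(r) \Leftrightarrow \depth_{K/E}(\bar\sigma)\geq s$, uniformly in $\sigma$ and $r$, and you are done. So the skeleton is right, but the execution contains a genuine error at the decisive step; fix that one sentence and discard everything after it.
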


\begin{proof}   Suppose $\sigma \in \Group(L/E)$.  

We have $\sigma \in \Group(L/E)_r \Group(L/K)$ if and only if $\depth_{L/E}^{L/K} (\sigma) \geq r$.  Since  $\depth_{L/E}(1) = \infty$, the function 
\[ x \mapsto  \sum_{\tau \in \Group(L/K)} \min( \depth_{L/E}(\tau) ,x) \]
is strictly increasing.  Thus, $\depth_{L/E}^{L/K} (\sigma) \geq r$ if and only if 
\begin{equation*}
    \begin{split}
\depth_{K/E}(\bar{\sigma}) &=   \sum_{\tau \in \Group(L/K)} \min( \depth_{L/E}(\tau) , \depth^{L/K}_{L/E} (\sigma))\\ &\geq  \sum_{\tau \in \Group(L/K)} \min( \depth_{L/E}(\tau) , r) = s.
\end{split}
\end{equation*}
Since $\bar{\sigma} \in \Group(K/E)_s$ if and only if 
$\depth_{K/E}(\bar{\sigma}) \geq s$, the lemma is proved.
\end{proof}

\begin{definition}  \label{def:newphi} For $x \in \R_{\geq 0}$ we define a  function
$$\varphi_{L/E}(x) = \sum_{\sigma \in \Group(L/E)} \min( \depth_{L/E}(\sigma),x)$$
\end{definition}

\begin{remark}
The function $\varphi_{L/E} \colon \R_{\geq 0} \rightarrow \R_{\geq 0}$ is piecewise linear, continuous, $0$ at $0$,  strictly increasing, and bijective.  Moreover, at those $x \geq 0$  for which $\varphi_{L/E}$ is differentiable, we have
 \begin{equation*}
 \varphi'_{L/E}(x) = \sum_{\sigma \in \Group(L/E)} \min(\depth_{L/E}(\sigma),x)'  = \sum_{\substack{\sigma \in \Group(L/E) \\ \depth_{L/E}(\sigma) \geq x}} 1 
    = 
    \left| \Group(L/E)_x \right|.
    \end{equation*}  
Note that this function is not convex (like $x \mapsto e^x$), but it  is concave.  
\end{remark}

\begin{definition}
    Define $\psi_{L/E} \colon \R_{\geq 0} \rightarrow \R_{\geq 0}$ by $\psi_{L/E}(x) = \varphi_{L/E}^{-1}(x)$. 
\end{definition}

    \begin{remark}  The function $\psi_{L/E}$ is a normalized version of the Hasse-Herbrand function.
    \end{remark}
    
We can now recast Lemma~\ref{lem:six} and Lemma~\ref{lem:seven}.
\begin{lemma}  \label{lem:11} Recall that $E/F$ is a finite separable extension.  Let $E \leq K \leq L$ be a tower of finite extensions of $E$ with $L/E$ and $K/E$ Galois.
\begin{enumerate}
    \item If $\sigma \in \Group(L/E)$, then
    \[ \depth_{K/E}(\bar{\sigma}) = \varphi_{L/K}(\depth_{L/E}^{L/K}(\sigma)) = \max_{\tau \in \Group(L/K)} \varphi_{L/K}(\depth_{L/E}(\sigma \tau)).\]
    \item If $r \in \R_{\geq 0}$, then
    \[ \Group(L/E)_r \Group(L/K)/\Group(L/K) = \Group(K/E)_{\varphi_{L/K}(r)}.\]
\end{enumerate}
\end{lemma}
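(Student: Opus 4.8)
The plan is to derive both parts of Lemma~\ref{lem:11} by direct translation from the earlier lemmas, using the definition of $\varphi_{L/K}$ and the remark from the excerpt that $\depth_{L/E}(\tau) = \depth_{L/K}(\tau)$ for $\tau \in \Group(L/K)$. The key observation tying everything together is that for any real number $x \geq 0$,
\[
\varphi_{L/K}(x) = \sum_{\tau \in \Group(L/K)} \min(\depth_{L/K}(\tau), x) = \sum_{\tau \in \Group(L/K)} \min(\depth_{L/E}(\tau), x),
\]
so $\varphi_{L/K}$ is precisely the strictly increasing function appearing in Lemma~\ref{lem:seven} and (applied at $x = \depth_{L/E}^{L/K}(\sigma)$) in Lemma~\ref{lem:six}.

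For part~(1), I would take $\sigma \in \Group(L/E)$ and apply Lemma~\ref{lem:six}, which states
\[
\depth_{K/E}(\bar{\sigma}) = \sum_{\tau \in \Group(L/K)} \min(\depth_{L/E}(\tau), \depth_{L/E}^{L/K}(\sigma)).
\]
By the displayed identity above with $x = \depth_{L/E}^{L/K}(\sigma)$, the right-hand side is exactly $\varphi_{L/K}(\depth_{L/E}^{L/K}(\sigma))$. For the second equality in part~(1), recall from \Cref{def:newphi}'s remark that $\varphi_{L/K}$ is strictly increasing, hence order-preserving; since $\depth_{L/E}^{L/K}(\sigma) = \max_{\tau \in \Group(L/K)} \depth_{L/E}(\sigma\tau)$ by \Cref{relative-depth}, applying the monotone function $\varphi_{L/K}$ commutes with the maximum, giving $\varphi_{L/K}(\depth_{L/E}^{L/K}(\sigma)) = \max_{\tau} \varphi_{L/K}(\depth_{L/E}(\sigma\tau))$.

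For part~(2), I would invoke Lemma~\ref{lem:seven} directly: it says $\Group(L/E)_r \Group(L/K)/\Group(L/K) = \Group(K/E)_s$ with $s = \sum_{\tau \in \Group(L/K)} \min(\depth_{L/E}(\tau), r)$. Again by the displayed identity with $x = r$, we have $s = \varphi_{L/K}(r)$, which is precisely the claim.

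Honestly, there is no real obstacle here: the lemma is a bookkeeping restatement, and the only point requiring a moment's care is confirming that the sum defining $\varphi_{L/K}$ (indexed by $\tau \in \Group(L/K)$ with the depth computed \emph{relative to $L/K$}) agrees with the sums in Lemmas~\ref{lem:six} and~\ref{lem:seven} (where the depth is written relative to $L/E$) — and this is exactly the content of the remark that $\depth_{L/E}(\tau) = \depth_{L/K}(\tau)$ on $\Group(L/K)$, which follows from part~(3) of \Cref{depth-properties}. The one mild subtlety worth flagging is that for part~(1) to make literal sense one should note $\depth_{L/E}^{L/K}(\sigma)$ may be $\infty$ (when $\bar\sigma = 1$), in which case both sides are $\infty$ under the convention that $\varphi_{L/K}$ extends continuously to $\infty \mapsto \infty$; this matches the convention already used in the proof of \Cref{lem:firstformula}.
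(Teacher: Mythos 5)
Your proof is correct and takes essentially the same approach as the paper: the paper gives no explicit proof of \Cref{lem:11}, instead introducing it with the remark ``We can now recast Lemma~\ref{lem:six} and Lemma~\ref{lem:seven}'' — that is, the lemma is obtained exactly as you describe, by recognizing the sums in \Cref{lem:six} and \Cref{lem:seven} as $\varphi_{L/K}$ evaluated at the appropriate argument (via $\depth_{L/E}\vert_{\Group(L/K)} = \depth_{L/K}$), with the final equality of part~(1) following from the monotonicity of $\varphi_{L/K}$.
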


\begin{corollary} \label{cor:19}
Suppose $L/K/E$ is a tower of Galois extensions and $s \geq 0$.  The following sequence is exact.
 \pushQED{\qed} 
\[ 1 \ra \Group(L/K)_s \ra \Group(L/E)_s \ra \Group(K/E)_{\varphi_{L/K}(s)} \ra 1. \qedhere \]
    \popQED 
\end{corollary}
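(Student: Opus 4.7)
The plan is to read off each of the three exactness conditions directly from results already established in the excerpt, in particular Lemma~\ref{lem:3}(4) and Lemma~\ref{lem:11}(2).

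First I would set up the maps. Because $K/E$ is Galois and $E\le K\le L$, the subgroup $\Group(L/K)$ is normal in $\Group(L/E)$ and the natural map $\Group(L/E)\twoheadrightarrow\Group(K/E)$ sends $\sigma\mapsto\bar\sigma$. The first arrow in the sequence is the inclusion $\Group(L/K)_s\hookrightarrow\Group(L/E)_s$, which makes sense because $\Group(L/K)_s\subseteq\Group(L/K)\cap\Group(L/E)_s$; the second arrow is the restriction of $\sigma\mapsto\bar\sigma$ to $\Group(L/E)_s$, whose image lies in $\Group(K/E)_{\varphi_{L/K}(s)}$ by Lemma~\ref{lem:11}(2).

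Injectivity of the first arrow is immediate. For exactness in the middle, the kernel of $\Group(L/E)_s\to\Group(K/E)$ is $\Group(L/E)_s\cap\Group(L/K)$, which equals $\Group(L/K)_s$ by Lemma~\ref{lem:3}(4); this exactly matches the image of the first arrow. For surjectivity onto $\Group(K/E)_{\varphi_{L/K}(s)}$, Lemma~\ref{lem:11}(2) gives
\[
\Group(L/E)_s\,\Group(L/K)/\Group(L/K) = \Group(K/E)_{\varphi_{L/K}(s)},
\]
which says precisely that every element of $\Group(K/E)_{\varphi_{L/K}(s)}$ is of the form $\bar\sigma$ for some $\sigma\in\Group(L/E)_s$.

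There is no real obstacle here: the content has already been packaged in Lemmas~\ref{lem:3} and~\ref{lem:11}, and this corollary is simply the assembly of those statements into a short exact sequence. The only point worth a sentence of comment in the write-up is that Lemma~\ref{lem:3}(4) is what guarantees the kernel computation matches $\Group(L/K)_s$ rather than just some subgroup of $\Group(L/K)$ containing it.
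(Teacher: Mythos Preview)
Your proof is correct and matches the paper's intent: the corollary is stated with an immediate \qedsymbol\ and no written argument, precisely because it is the direct assembly of Lemma~\ref{lem:3}(4) (exactness in the middle) and Lemma~\ref{lem:11}(2) (surjectivity on the right) that you spell out. There is nothing to add.
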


\begin{corollary} \label{cor:exact2}
    Suppose $L/K/E$ is a tower of Galois extensions and $s \geq 0$.  We have
\[ s > \ell(L/E) \text{ if and only if } s > \ell(L/K) \text{ and } \varphi_{L/K}(s) > \ell(K/E). \]
\end{corollary}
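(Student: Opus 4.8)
The plan is to deduce the equivalence from the short exact sequence of \Cref{cor:19}, together with the following elementary observation: for any tower $F \leq N \leq M$ of finite separable extensions with $M/N$ Galois and any \emph{positive} real $s$, the group $\Group(M/N)_s$ is trivial if and only if $s > \ell(M/N)$. This is essentially the definition of $\ell(M/N)$: by \Cref{lem:3} the groups $\Group(M/N)_r$ are finite, decreasing in $r$, and have trivial intersection, so they are eventually trivial and $\ell(M/N)$ is the infimum of the set of $r$ at which triviality holds; moreover, if $\Group(M/N)$ is nontrivial then $\ell(M/N)$ is a jump of the filtration, so $\Group(M/N)_{\ell(M/N)}$ is itself nontrivial and triviality of $\Group(M/N)_s$ forces the strict inequality $s > \ell(M/N)$.

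With this observation in hand the argument is short, and I would organize it as follows. First dispose of the case $s = 0$: there $\varphi_{L/K}(0) = 0$ while $\ell(L/E), \ell(L/K), \ell(K/E) \geq 0$, so both sides of the claimed equivalence are false and there is nothing to prove. For $s > 0$, invoke \Cref{cor:19} to obtain the exact sequence
\[
1 \ra \Group(L/K)_s \ra \Group(L/E)_s \ra \Group(K/E)_{\varphi_{L/K}(s)} \ra 1,
\]
whose middle term is trivial if and only if both outer terms are. Since $\varphi_{L/K}$ is a strictly increasing bijection of $\R_{\geq 0}$ fixing $0$, we have $\varphi_{L/K}(s) > 0$, so the observation applies to all three groups and rewrites ``$\Group(L/E)_s$ trivial'' as ``$s > \ell(L/E)$'', ``$\Group(L/K)_s$ trivial'' as ``$s > \ell(L/K)$'', and ``$\Group(K/E)_{\varphi_{L/K}(s)}$ trivial'' as ``$\varphi_{L/K}(s) > \ell(K/E)$''. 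Combining the three equivalences gives exactly the asserted statement.

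The only delicate point --- and hence the main, if minor, obstacle --- is the relationship between triviality of $\Group(M/N)_s$ and the inequality $s > \ell(M/N)$ at the boundary value $s = 0$: the inertia group $\Group(M/N)_0 = \Group(M/N)$ may well be trivial (precisely when $M/N$ is unramified) even though $\ell(M/N) = 0$, so the clean biconditional fails there. Treating $s = 0$ separately, as above, avoids this entirely; once $s > 0$ is assumed, the corollary is a purely formal consequence of \Cref{cor:19} and the definition of $\ell$.
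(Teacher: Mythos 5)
Your argument is correct and follows essentially the same route as the paper: the paper's proof is simply ``Consider the short exact sequence of Corollary~\ref{cor:19}; the middle group is trivial if and only if both outer groups are trivial.'' What you add is the careful justification that triviality of $\Group(M/N)_s$ is equivalent to $s > \ell(M/N)$ once $s > 0$, together with the separate (trivial) treatment of $s = 0$ where that biconditional can fail for unramified extensions --- a genuine boundary issue that the paper leaves implicit, so your extra care is a welcome clarification rather than a deviation.
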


\begin{proof}
Consider the short exact sequence of \Cref{cor:19}.
The middle group is trivial if and only if both outer groups are trivial.
\end{proof}

\begin{lemma}  \label{lem:12}
If $E \leq K \leq L$ is a tower of fields with $K/E$ and $L/E$ finite Galois, then
\[
\varphi_{L/E} = \varphi_{K/E} \circ \varphi_{L/K},
\qquad
\psi_{L/E} = \psi_{L/K}\circ\psi_{K/E}.
\] 
\end{lemma}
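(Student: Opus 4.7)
The plan is to prove the first identity $\varphi_{L/E} = \varphi_{K/E}\circ\varphi_{L/K}$ and then deduce the second by taking inverses: once we know $\varphi_{L/E} = \varphi_{K/E}\circ\varphi_{L/K}$, the identity $\psi_{L/E} = \psi_{L/K}\circ\psi_{K/E}$ follows formally from $(f\circ g)^{-1} = g^{-1}\circ f^{-1}$ together with the definition $\psi = \varphi^{-1}$.

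To prove the first identity, I would compare the two piecewise-linear, continuous, strictly increasing functions $\varphi_{L/E}$ and $\varphi_{K/E}\circ\varphi_{L/K}$ on $\R_{\geq 0}$ by showing they agree at $x=0$ and have the same derivative wherever both are differentiable. At $x=0$ both functions vanish, since $\varphi_{L/E}(0)=0$ (all depths are nonnegative) and $\varphi_{L/K}(0)=0$, hence $\varphi_{K/E}(\varphi_{L/K}(0))=0$. Both sides are continuous, piecewise linear, and equal on a dense set, which forces equality everywhere.

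For the derivatives, the remark after \Cref{def:newphi} identifies $\varphi'_{L/E}(x) = |\Group(L/E)_x|$ (at points of differentiability), and similarly for the other Herbrand functions. By the chain rule, at a point $x$ where both $\varphi_{L/K}$ is differentiable and $\varphi_{K/E}$ is differentiable at $\varphi_{L/K}(x)$, we have
\[
(\varphi_{K/E}\circ\varphi_{L/K})'(x) = \varphi'_{K/E}(\varphi_{L/K}(x))\cdot\varphi'_{L/K}(x) = |\Group(K/E)_{\varphi_{L/K}(x)}|\cdot|\Group(L/K)_x|.
\]
The key input is then \Cref{cor:19}, which provides the short exact sequence
\[
1 \to \Group(L/K)_x \to \Group(L/E)_x \to \Group(K/E)_{\varphi_{L/K}(x)} \to 1,
\]
yielding $|\Group(L/E)_x| = |\Group(L/K)_x|\cdot|\Group(K/E)_{\varphi_{L/K}(x)}|$. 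This matches $\varphi'_{L/E}(x)$, so the two functions have the same derivative off a finite set.

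The mildly technical point is to ensure that the set of $x$ where one of the three derivative formulas fails is finite (or at least measure zero) and handled by continuity; since each $\varphi$ is piecewise linear with only finitely many break points on any bounded interval, this is immediate. The main conceptual step is really the invocation of \Cref{cor:19}; once that short exact sequence is in hand, the rest is the standard ``derivatives agree plus continuity'' argument, so I do not anticipate any serious obstacle.
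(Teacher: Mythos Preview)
Your proposal is correct and follows essentially the same approach as the paper: deduce the $\psi$-identity from the $\varphi$-identity by inversion, and prove the latter by matching values at $0$ and comparing derivatives via the chain rule, invoking \Cref{cor:19} for the multiplicativity of orders. The paper's proof is virtually identical in structure and in the choice of key input.
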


\begin{proof}
The second part follows from the first part by taking the inverse of both sides.
For the first part, since both sides are piecewise linear continuous functions that agree at $0$,
it is enough to show that their derivatives agree at all but finitely many points.

Choose $x \in \R_{\geq 0}$ such that the three functions 
$\varphi_{L/E}$,   $\varphi_{K/E}$, and $\varphi_{L/K}$ are all differentiable at $x$.
Then
\[
\varphi'_{L/E}(x) = |\Group(L/E)_x|,
\]
and reusing this result twice (after replacing $L/E$ with $L/K$ or $K/E$),
we find that
\[
(\varphi_{K/E}\circ\varphi_{L/K})'(x)
= \varphi'_{K/E}(\varphi_{L/K}(x))\cdot\varphi'_{L/K}(x)
= |\Group(K/E)_{\varphi_{L/K}(x)}|\cdot|\Group(L/K)_x|.
\]
Now the proof is completed using \Cref{cor:19}.
\end{proof}

\begin{definition} \label{def:23}
Suppose $L/E$ is a finite Galois extension and $s \geq 0$.  We define 
\[ \Group(L/E)^s := \Group(L/E)_{\psi_{L/E}(s)}. \]
\end{definition}

\begin{lemma}   \label{lem:upperquot}
If $s \in \R_{\geq 0}$ and  $E \leq K \leq L$ is a tower of fields with $K/E$ and $L/E$ finite Galois, then
\[ \Group(K/E)^s = \Group(L/E)^s \Group(L/K)/ \Group(L/K).\] 
\end{lemma}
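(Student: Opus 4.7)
The plan is to unravel the definition of upper numbering and then chain together Lemma~\ref{lem:11}(2) (equivalently Corollary~\ref{cor:19}) with the functional equation in Lemma~\ref{lem:12}. By Definition~\ref{def:23}, the right-hand side expands as
\[
\Group(L/E)^s \Group(L/K)/\Group(L/K) = \Group(L/E)_{\psi_{L/E}(s)}\Group(L/K)/\Group(L/K),
\]
and the right-hand side is already expressed in lower numbering on the group $\Group(L/E)$. So the first step is to apply Lemma~\ref{lem:11}(2) with $r = \psi_{L/E}(s)$ to pass from this lower-numbering quotient to a lower-numbering subgroup of $\Group(K/E)$:
\[
\Group(L/E)_{\psi_{L/E}(s)}\Group(L/K)/\Group(L/K) = \Group(K/E)_{\varphi_{L/K}(\psi_{L/E}(s))}.
\]

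The second step is to simplify the index $\varphi_{L/K}(\psi_{L/E}(s))$ using the transitivity statement $\psi_{L/E} = \psi_{L/K}\circ\psi_{K/E}$ from Lemma~\ref{lem:12}, which gives
\[
\varphi_{L/K}(\psi_{L/E}(s)) = \varphi_{L/K}\bigl(\psi_{L/K}(\psi_{K/E}(s))\bigr) = \psi_{K/E}(s)
\]
since $\varphi_{L/K}$ and $\psi_{L/K}$ are mutually inverse. The right-hand side is then $\Group(K/E)_{\psi_{K/E}(s)}$, which by Definition~\ref{def:23} is exactly $\Group(K/E)^s$.

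There is no real obstacle here; all the work has already been done in Lemma~\ref{lem:11} and Lemma~\ref{lem:12}, and the upper numbering is designed precisely so that this compatibility with quotients holds. The only point worth double-checking is that $\psi_{L/E}(s)$ is a legitimate input to Lemma~\ref{lem:11}(2), which requires $\psi_{L/E}(s) \in \R_{\geq 0}$; this is immediate since $\psi_{L/E}$ is the inverse of the bijection $\varphi_{L/E} \colon \R_{\geq 0}\to\R_{\geq 0}$ described in the remark following Definition~\ref{def:newphi}.
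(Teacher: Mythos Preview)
Your proof is correct and follows essentially the same route as the paper: unwind Definition~\ref{def:23}, apply the lower-numbering quotient formula (you cite Lemma~\ref{lem:11}(2), the paper cites the equivalent Corollary~\ref{cor:19}), simplify the index via $\varphi_{L/K}\circ\psi_{L/E}=\psi_{K/E}$ from Lemma~\ref{lem:12}, and reapply Definition~\ref{def:23}. There is nothing to add.
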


\begin{proof}
By definition, $\Group(L/E)^s \Group(L/K)/ \Group(L/K) 
= \Group(L/E)_{\psi_{L/E}(s) }\Group(L/K)/ \Group(L/K)$.
By \Cref{cor:19}, this quotient is $\Group(K/E)_{\varphi_{L/K}(\psi_{L/E}(s))}$.
Now \Cref{lem:12} implies that
\[
\varphi_{L/K}\circ\psi_{L/E}
= \varphi_{L/K}\circ\psi_{L/K}\circ\psi_{K/E} = \psi_{K/E},
\]
and the proof is finished by again applying \Cref{def:23}.
\end{proof}

\begin{exercise} \label{exc:alternateformsofexact}  Suppose $L/K/E$ is a tower of Galois extensions.  Show that  for all $t \geq 0$ the following sequences are exact.
\[ 1 \ra \Group(L/K)_{\psi_{L/E}(t)} \ra \Group(L/E)^t \ra \Group(K/E)^t \ra 1.  \]
\[ 1 \ra \Group(L/K)^{\psi_{K/E}(t)} \ra \Group(L/E)^t \ra \Group(K/E)_{\psi_{K/E}(t)} \ra 1.  \]
\[ 1 \ra \Group(L/K)^{\psi_{K/E}(t)} \ra \Group(L/E)^t \ra \Group(K/E)^t \ra 1.  \]
\[ 1 \ra \Group(L/K)^{t} \ra \Group(L/E)_{\psi_{L/K}(t)} \ra \Group(K/E)_{t} \ra 1.  \]
\end{exercise}

\subsection{Some examples}
\subsubsection{Example: Unramified and tamely ramified extensions}  \label{ex:unram} Suppose $L/E/F$ is a tower of fields as usual.   Note that $E \leq L^{\Group(L/E)_0} \leq L^{\Group(L/E)_{0^+} } \leq L$, and by construction we have 
\begin{itemize}
    \item $L/E$ is unramified if and only if $L = L^{\Group(L/E)_0}$ and
    \item $L/E$ is tamely ramified if and only if $L = L^{\Group(L/E)_{0+}}$.
\end{itemize} 
Thus, if $L$ is an unramified or tamely ramified extension of $E$, then 
$\Group(L/E)_{0+}$ is trivial and so $\varphi_{L/E} \colon \R_{\geq 0} \rightarrow \R_{\geq 0}$ is the identity map.  It follows that in these situations we have $\Group(L/E)^r = \Group(L/E)_r$ for all $r \geq 0$.   Graphs of $\varphi_{L/E}$ may be found in Figures~\ref{fig:unram} and~\ref{fig:tame}.
\begin{figure}[ht]
    \centering
    \begin{minipage}{0.5\textwidth}
        \centering
      \begin{tikzpicture}
 \draw (-1.2,0) -- (4,0);
 \draw (0,-1.2) -- (0,4);
 \draw[color = green, thick] (0.0,0.0) -- (4.2,4.2);
 
\draw (.05,1) -- (-.05,1);
\draw (.05,2) -- (-.05,2);
\draw (.05,3) -- (-.05,3);

\draw (1,-.05) -- (1,.05);
\draw (2,-.05) -- (2,.05);
\draw (3,-.05) -- (3,.05);

  \draw (-.2,1) node { \Scale[.75]{1}};
 \draw (-.2,2) node { \Scale[.75]{2}};
 \draw (1,-.2) node { \Scale[.75]{1}};
  \draw (2,-.2) node { \Scale[.75]{2}};

  \draw (.2,4) node { \Scale[1]{s}};
     \draw (4,.2) node { \Scale[1]{r}};

   \draw (-1.4,4) node { \Scale[1]{\Group(L/E)^s}};
     \draw (4,-0.9) node { \Scale[1]{\Group(L/E)_r}};
 
 \filldraw[color = green] (0,0) circle (2pt);

 \draw[color = green] (-.8,2.5) node{ $1$};

 \draw[color = green] (-.5,0) -- (-.5, 4.5);
  \filldraw[color = green] (-.5,0) circle (2pt);
 \draw[color = green] (2.5,-.8) node{ $1$};

   \draw[color = green] (0,-.5) -- (4.5,-.5);
  \filldraw[color = green] (0,-.5) circle (2pt);
                \end{tikzpicture}

        \caption{$L/E$ unramified}  \label{fig:unram}
    \end{minipage}\hfill
    \begin{minipage}{0.5\textwidth}
        \centering
       \begin{tikzpicture}
 \draw (-1.2,0) -- (4,0);
 \draw (0,-1.2) -- (0,4);
 \draw[color = green, thick] (0.0,0.0) -- (4.2,4.2);
 
\draw (.05,1) -- (-.05,1);
\draw (.05,2) -- (-.05,2);
\draw (.05,3) -- (-.05,3);

\draw (1,-.05) -- (1,.05);
\draw (2,-.05) -- (2,.05);
\draw (3,-.05) -- (3,.05);

  \draw (-.2,1) node { \Scale[.75]{1}};
 \draw (-.2,2) node { \Scale[.75]{2}};
 \draw (1,-.2) node { \Scale[.75]{1}};
  \draw (2,-.2) node { \Scale[.75]{2}};

  \draw (.2,4) node { \Scale[1]{s}};
     \draw (4,.2) node { \Scale[1]{r}};

   \draw (-1.4,4) node { \Scale[1]{\Group(L/E)^s}};
     \draw (4,-0.9) node { \Scale[1]{\Group(L/E)_r}};
 
 \filldraw[color = blue] (0,0) circle (2pt);
 
\draw[color = blue] (-.6,-.4) node{ \Scale[.6]{ \Group(L/E)}};
 \draw[color = green] (-.8,2.5) node{ $1$};

 \draw[color = green] (-.5,0) -- (-.5, 4.5);
  \filldraw[color = blue] (-.5,0) circle (2pt);
 \draw[color = green] (2.5,-.8) node{ $1$};

 \draw[color = green] (0,-.5) -- (4.5,-.5);
  \filldraw[color = blue] (0,-.5) circle (2pt);

                \end{tikzpicture}

        \caption{$L/E$ tamely ramified, but not unramified} \label{fig:tame}
    \end{minipage}
\end{figure}

\begin{wrapfigure}[17]{R}{0.4\linewidth}
\centering
\vspace{2.5em}
  \begin{tikzpicture}
 \draw (-1.2,0) -- (4,0);
 \draw (0,-1.2) -- (0,4);
  \draw[color = blue, thick] (0.0,0.0) -- (1,2);
 \draw[color = green, thick] (1,2) -- (3.2,4.2);
 
\draw (.05,1) -- (-.05,1);
\draw (.05,2) -- (-.05,2);

\draw (1,-.05) -- (1,.05);

  \draw (-.24,2) node { \Scale[.75]{2 \ell}};
 \draw (1,-.2) node { \Scale[.75]{\ell}};

  \draw (.2,4) node { \Scale[1]{s}};
     \draw (4,.2) node { \Scale[1]{r}};

   \draw (-1.4,4) node { \Scale[1]{\Group(L/E)^s}};
     \draw (4,-0.9) node { \Scale[1]{\Group(L/E)_r}};

 \filldraw[color = blue] (0,0) circle (2pt);
  \filldraw[color = blue] (-.5,0) circle (2pt);
 \filldraw[color = blue] (1,2) circle (2pt);

 \draw[color = blue] (-1.0,1.0) node{\Scale[.6] {\Z/2\Z}};
  \draw[color = green] (-.8,3.0) node{\Scale[.6] {1 }};

 \draw[color = blue] (-.5,0) -- (-.5, 2.0);
  \filldraw[color = blue] (-.5,2) circle (2pt);
 
  \filldraw[color = blue] (-.5,2) circle (2pt);
 \draw[color = green] (-.5,2.0) -- (-.5, 4.5);

 \draw[color = blue] (0,-.5) -- (1.0,-.5);
  \filldraw[color = blue] (0,-.5) circle (2pt);
  \filldraw[color = blue] (1,-.5) circle (2pt);
   \draw[color = green] (1,-.5) -- (4.5,-.5);
\draw[color = blue] (.5,-.8) node{\Scale[.6] {\Z/2\Z}};
  \draw[color = green] (2.5,-.8) node{ \Scale[.6]{1}};

                \end{tikzpicture}

\end{wrapfigure}
\subsubsection{Example: wildly ramified separable quadratic extensions} \label{ex:quadratic}  Suppose the characteristic of the residue field of $F$ is two, and let $L$ be a separable quadratic extension of $E = F$.
Write  $\Gal(L/E) = \langle \sigma \rangle$, and  fix a uniformizer $\varpi_L \in L$.  Let $x^2 + ax + b$ be the minimal polynomial for $\varpi_L$ over $E$.
Set $\ell = \ell(L/E) = \depth_{L/E}(\sigma)$.  According to~\cite[Lemma 41.1]{BH:GLtwo} we have 
\[\displaywidth=\parshapelength\numexpr\prevgraf+2\relax
2 \ell = \min \{ \val(4), 2\val(a) - 1 \}.\]
So $2 \ell$ is odd unless $F$ has characteristic zero and $2 \ell = \val(4)$.
Since $\Group(L/E)$ has order two, we conclude that 
\begin{equation*}
\displaywidth=\parshapelength\numexpr\prevgraf+2\relax
\varphi_{L/E}(r) =
    \begin{cases}
        2r & \qquad 0 \leq r \leq \ell \\
        \ell + r & \qquad \ell < r.
    \end{cases}
\end{equation*}
A graph of $\varphi_{L/E}$ is at right.

\subsubsection{Example: quaternionic Galois groups}   Let $Q = \{\pm 1, \pm i, \pm j, \pm k \}$ be the quaternion group with the usual relations.  

Our first example of a quaternionic extension is taken from~\cite[IV.3 Exc.~2]{serre:local}.  Serre shows~\cite[\S4]{serre:sur} that there exist fields $F$, $E$, and $L$ such that
\begin{enumerate}
    \item $L$ is a totally ramified Galois extension of $E$,
    \item $\Gal(L/E) = Q$, and
    \item $\Group(L/E)_{4/e(L/F)}$ is trivial.
\end{enumerate}
To ease notation, we assume $F = E$.
Since $L/E$ is totally ramified, we have $\Group(L/E) = \Gal(L/E)$.  Thus, $e(L/F) = e(L/E) = 8$.

Since $\Group(L/E)_{0:0^+}$ injects into $k_L$, it is a cyclic group. 
As $Q$ is not cyclic, we must therefore have that $\Group(L/F)_{0+}$ is nontrivial.  Since every subgroup and quotient group of $Q$ is a two group, we conclude that the characteristic of $k_F = k_E$ is two.  This forces $\Group(L/F)_{0:0^+} \leq k_F^\times$ to have an odd number of elements, hence it must be trivial.
Since the commutator subgroup of $Q$ is $Z=\{ \pm 1 \}$, there are two more jumps.   The next jump can't be at $2/8$ or $3/8$: if it were,  then we must have $i,j \in \Group(L/E)_{2/8}$ which implies $-1 \in \Group(L/E)_{4/8+} \leq \Group(L/E)_{4/8} = 1$, a contradiction.  Therefore, the next jump must happen at $1/8$.   If the other jump happens at $t/8$, then by Lemma~\ref{lem:lemma5} we have that $2$ must divide $8 \cdot (t/8 - 1/8)$.  Thus, the final jump happens at $3/8$. We therefore have $\Group(L/E)_r$ is $Q$ for $0 \leq r  \leq 1/8$, it is  $Z$ for $1/8 < r  \leq 3/8$, and it is  $1$ for $3/8 < r$.  Consequently, $\Group(L/E)^s$ is $Q$ for $0 \leq s  \leq 1$, it is  $Z$ for $1 < s  \leq 3/2$, and it is  $1$ for $3/2 < s$.

We graph $\varphi_{L/E}$ below.

 \begin{tikzpicture}
 \draw (-1.2,0) -- (10,0);
 \draw (0,-1.2) -- (0,10);
 \draw[color = blue, thick] (0.0,0.0) -- (.5,4.0);
  \draw[color = red, thick] (.5,4.0) -- (1.5,6);
   \draw[color = green, thick]  (1.5,6) -- (6,10);
\draw (.05,2) -- (-.05,2);
\draw (.05,4) -- (-.05,4);
\draw (.05,6) -- (-.05,6);
\draw (.05,8) -- (-.05,8);

\draw (2,-.05) -- (2,.05);
\draw (4,-.05) -- (4,.05);
\draw (6,-.05) -- (6,.05);
\draw (8,-.05) -- (8,.05);

  \draw (-.2,4) node { \Scale[.75]{1}};
 \draw (-.2,8) node { \Scale[.75]{2}};
 \draw (4,-.2) node { \Scale[.75]{1}};
  \draw (8,-.2) node { \Scale[.75]{2}};

  \draw (.2,10) node { \Scale[1]{s}};
     \draw (10,.2) node { \Scale[1]{r}};

   \draw (-1.4,10) node { \Scale[1]{\Group(L/E)^s}};
     \draw (10,-0.9) node { \Scale[1]{\Group(L/E)_r}};
 
 \filldraw[color = blue] (0,0) circle (2pt);
  \filldraw[color = blue] (.5,4) circle (2pt);
   \filldraw[color = red] (1.5,6) circle (2pt);

 \draw[color = blue] (-.8,2) node{ $Q$};
 \draw[color = red] (-.8,5) node{ $Z$};
 \draw[color = green] (-.8,8) node{ $1$};

  \draw[color = blue] (-.5,0) -- (-.5,4); 
 \draw[color = red] (-.5,4) -- (-.5, 6);
 \draw[color = green] (-.5,6) -- (-.5, 10.4);

  \filldraw[color = blue] (-.5,0) circle (2pt);
  \filldraw[color = blue] (-.5,4) circle (2pt);
   \filldraw[color = red] (-.5,6) circle (2pt);

    \draw[color = blue] (.25,-.8) node{ $Q$};
 \draw[color = red] (1.00,-.8) node{ $Z$};
 \draw[color = green] (6,-.8) node{ $1$};

  \draw[color = blue] (0,-.5) -- (.5,-.5); 
 \draw[color = red] (.5,-.5) -- (1.5,-.5);
 \draw[color = green] (1.5,-.5) -- (10.4,-.5);

  \filldraw[color = blue] (0,-.5) circle (2pt);
  \filldraw[color = blue] (.5,-.5) circle (2pt);
   \filldraw[color = red] (1.5,-.5) circle (2pt);

                \end{tikzpicture}

\begin{remark}
    In~\cite[\S4]{serre:sur}  the field  $L$ in the example above is an extension of  $E = \Q_2(\sqrt{5})$.  
\end{remark}

\begin{exercise} 
\label{exc:lowerupperjumps} Set $K = L^Z$.  Show $\Gal(L/K)=\Group(L/K)  =\Group(L/K)_{3/8} = Z$
and 
\( \Group(L/K)_{3/8+} = 1\).
Conclude that $\ell(L/E) = \ell(L/K)$, but $\varphi_{L/E}(3/8)  \neq   \varphi_{L/K}(3/8)$.
\end{exercise}

\begin{exercise} \label{exc:QoverQ2} 
According to the L-functions and modular forms database\footnote{\url{https://www.lmfdb.org/}} (LMFDB) there are, up to isomorphism, three totally ramified quaternionic extensions of $\Q_2$.   Moreover, LMFDB reports that if $L$ denotes any of these extensions of $E = F = \Q_2$, then the jumps for the lower numbering filtration subgroups are $1/8$, $3/8$, and $7/8$.  Show that the graph of $\varphi_{L/E}$ found below is correct.  The label $C_4$ denotes an order four cyclic subgroup of $Q$.

 \begin{tikzpicture}
 \draw (-1.2,0) -- (10,0);
 \draw (0,-1.2) -- (0,10);

\draw (.05,2) -- (-.05,2);
 \draw (.05,4) -- (-.05,4);
\draw (.05,6) -- (-.05,6);
\draw (.05,8) -- (-.05,8);

\draw (2,-.05) -- (2,.05);
\draw (4,-.05) -- (4,.05);
\draw (6,-.05) -- (6,.05);
\draw (8,-.05) -- (8,.05);

  \draw (-.2,2) node { \Scale[.75]{1}};
 \draw (-.2,4) node { \Scale[.75]{2}};
   \draw (-.2,6) node { \Scale[.75]{3}};
 \draw (-.2,8) node { \Scale[.75]{4}};
 \draw (4,-.2) node { \Scale[.75]{1}};
  \draw (8,-.2) node { \Scale[.75]{2}};

  \draw (.2,10) node { \Scale[1]{s}};
     \draw (10,.2) node { \Scale[1]{r}};

   \draw (-1.4,10) node { \Scale[1]{\Group(L/E)^s}};
     \draw (10,-0.9) node { \Scale[1]{\Group(L/E)_r}};

  \draw[color = blue, thick] (0.0,0.0) -- (.5,2.0);
  \draw[color = purple, thick] (.5,2.0) -- (1.5,4);
   \draw[color = red, thick]  (1.5,4) -- (3.5,6);
     \draw[color = green, thick]  (3.5,6) -- (7.5,8);

\filldraw[color = blue] (0,0) circle (2pt);
  \filldraw[color = blue] (.5,2) circle (2pt);
   \filldraw[color = purple] (1.5,4) circle (2pt);
  \filldraw[color = red] (3.5,6) circle (2pt);
  
 \draw[color = blue] (-.8,1) node{ $Q$};
  \draw[color = purple] (-.8,3) node{ $C_4$};
 \draw[color = red] (-.8,5) node{ $Z$};
 \draw[color = green] (-.8,8) node{ $1$};

  \draw[color = blue] (-.5,0) -- (-.5,2); 
    \draw[color =purple] (-.5,2) -- (-.5,4); 
 \draw[color = red] (-.5,4) -- (-.5, 6);
 \draw[color = green] (-.5,6) -- (-.5, 10.4);

  \filldraw[color = blue] (-.5,0) circle (2pt);
  \filldraw[color = blue] (-.5,2) circle (2pt);
   \filldraw[color = purple] (-.5,4) circle (2pt);
    \filldraw[color = red] (-.5,6) circle (2pt);

    \draw[color = blue] (.25,-.8) node{ $Q$};
    \draw[color = purple] (1.05,-.8) node{ $C_4$};
 \draw[color = red] (2.50,-.8) node{ $Z$};
 \draw[color = green] (6,-.8) node{ $1$};

  \draw[color = blue] (0,-.5) -- (.5,-.5); 
   \draw[color = purple] (.5,-.5) -- (1.5,-.5);
 \draw[color = red] (1.5,-.5) -- (3.5,-.5);
 \draw[color = green] (3.5,-.5) -- (10.4,-.5);

  \filldraw[color = blue] (0,-.5) circle (2pt);
  \filldraw[color = blue] (.5,-.5) circle (2pt);
  \filldraw[color = purple] (1.5,-.5) circle (2pt);
   \filldraw[color = red] (3.5,-.5) circle (2pt);

                \end{tikzpicture}

\end{exercise}

\begin{remark} Note that in  Exercise~\ref{exc:QoverQ2} the jumps for the upper numbering filtration of $\Group(L/E)$ are all integers.  Since $Q$ is not abelian, this shows that the converse of the Hasse-Arf theorem~\cite[IV.3 Theorem]{serre:local} is not true.
\end{remark}

\begin{exercise}
According to LMFDB there exist fields $F$, $E$, and $L$ such that
\begin{enumerate}
    \item $L$ is a totally ramified Galois extension of $E$,
    \item $\Gal(L/E) = S_3$, the group of permutations on three distinct letters, and
    \item $\Group(L/E)_{3/e(L/F)}$ is trivial.
\end{enumerate}
Find and graph $\varphi_{L/E}$.
\end{exercise}

\subsubsection{Example: Cyclotomic Extensions of $\Q_p$}   \label{exc:cyclo} This is taken from~\cite[IV.4]{serre:local}.     For  $i, j \in \Z_{\geq 0}$
let
\begin{enumerate}
    \item $\zeta_{p^i}$ be a $p^i$-th primitive root of unity
    \item $L(i) := \Q_p(\zeta_{p^i})$
    \item $G(i) := (\Z/p^i \Z)^\times$  (In~\cite{serre:local} this is denoted $G(p^i)$.)
    \item  $G(i)^j := \{ a \in G(i) \, | \,  a \equiv 1 (p^j) \}$ if $j\leq i$
    and $G(i)^j = 1$ if $i \leq j$.
\end{enumerate}

Fix $n \in \Z_{\geq 1}$.   The Galois group of $L := L(n) $ over $ E = F := L(0) = \Q_p$ is isomorphic to $G(n)$.    For $0 \leq k \leq n$, the quotient $G(n)/G(n)^k$ can be identified with $G(k)$, the Galois group of $L(k)/E$.  Thus, $L^{G(n)^k} = L(k)$.

To compute the ramification subgroups of $\Gal(L/E)$,
the key observation is that $1-\zeta_{p^i}$ is a uniformizer of $L(i)/E$,
or equivalently, that $\val(1-\zeta_{p^i}) = [L(i):E]^{-1}$,
which equals $(p^i - p^{i-1})^{-1}$ if $i\geq1$.
We can use this observation to compute $\depth_{L/E}(\sigma)$.
Suppose $\sigma\zeta_{p^n} = \zeta_{p^n}^a$
with $a\in G(i)^d\setminus G(i)^{d+1}$,
meaning $\zeta_{p^n}^{a-1}$ is a primitive $p^{n-d}$th root of unity.
Then the depth function is given by the formula
\begin{align*}
e(L/E)\cdot\depth_{L/E}(\sigma) &= e(L/E)\cdot \val(\zeta_{p^n}^a - \zeta_{p^n}) - 1
= e(L/E)\cdot \val(\zeta_{p^n}^{a-1} - 1) - 1 \\
&= \frac{p^n-p^{n-1}}{p^{n-d} - p^{n-d-1}} - 1 = p^d - 1.
\end{align*}

For the ramification groups,
define $f \colon \R_{\geq 0} \rightarrow \R_{\geq 0}$
by $f(x) = \lceil \ln_p(1 + e(L/E) \cdot x) \rceil$.
Then for $0 \leq u \leq (p^{n-1} -1)/e(L/E)$,
we have $\Group(L/E)_u = G(n)^{f(u)}$.
In other words,
\begin{equation*}
\Group(L/E)_r  =
\begin{cases}
    G(n)= G(n)^0 \qquad & \text{if $ r = 0$,}\\
    G(n)^m \qquad & \text{if $p^{m-1} -1 < e(L/E) \cdot r \leq p^m -1$,}  \\
    G(n)^n = 1 \qquad & \text{if $p^{n-1} -1 < e(L/E) \cdot r$.}
\end{cases}
\end{equation*}

A direct calculation shows that $\varphi_{L/E}((p^k -1)/{e(L/E)}) = k$
for $k = 0, 1, 2, \ldots, n-1$,
and that, for general $r \in \R_{\geq 0}$, 
\begin{equation*}
    \varphi_{L/E}(r) =
    \begin{cases}
        k + \frac{e(L/E)}{p^{k+1} - p^k} \bigl( r - \frac{p^k-1}{e(L/E)} \bigr) & \qquad 0 \leq k \leq (n-2) \text{ and }\frac{p^{k}-1}{e(L/E)} \leq r < \frac{p^{k+1}-1}{e(L/E)}\\
        (n-1) + \bigl(r - \frac{p^{n-1}-1}{e(L/E)} \bigr) & \qquad \frac{p^{n-1}-1}{e(L/E)} \leq r.
    \end{cases}
\end{equation*}
For upper numbering we have $\Group(L/E)^s = G(n)^{\lceil s\rceil}$.

We graph $\varphi_{L/E}$ below for $p=3$ and $n = 4$.

 \begin{tikzpicture}
 \draw (-1.2,0) -- (11.5,0);
 \draw (0,-1.2) -- (0,7.5);
 \draw[color = red, thick] (0.0,0.0) -- (.592,2.0);
  \draw[color = cyan, thick] (.592,2.0) -- (2.368,4.0);
   \draw[color = purple, thick]  (2.358,4.0) -- (7.703,6);
    \draw[color = green, thick]  (7.703,6.0) -- (11.703,7);
\draw (.05,2) -- (-.05,2);
\draw (.05,4) -- (-.05,4);
\draw (.05,6) -- (-.05,6);

\draw (2,-.05) -- (2,.05);
\draw (4,-.05) -- (4,.05);
\draw (6,-.05) -- (6,.05);
\draw (8,-.05) -- (8,.05);

 \filldraw[color = blue] (0,0) circle (2pt);
  \filldraw[color = red] (.592,2.0) circle (2pt);
   \filldraw[color = cyan] (2.368,4.0) circle (2pt);
      \filldraw[color = purple] (7.703,6.0) circle (2pt);
  
 \draw (-.2,2) node { \Scale[.75]{1}};
  \draw (-.2,4) node { \Scale[.75]{2}};
 \draw (-.2,6) node { \Scale[.75]{3}};
 \draw (4,-.2) node { \Scale[.75]{.25}};
  \draw (8,-.2) node { \Scale[.75]{.5}};

  \draw (.2,7.5) node { \Scale[1]{s}};
     \draw (11.5,.2) node { \Scale[1]{r}};

   \draw (-1.4,7.5) node { \Scale[1]{\Group(L/E)^s}};
     \draw (11.5,-0.9) node { \Scale[1]{\Group(L/E)_r}};


 \draw[color = blue] (-.6,-.5) node{\Scale[.75]{G(4)}};
 \draw[color = red] (-1.2,1) node{ $G(4)^1$};
 \draw[color = cyan] (-1.2,3) node{ $G(4)^2$};
 \draw[color = purple] (-1.2,5) node{ $G(4)^3$};
 \draw[color = green] (-.8,7) node{ $1$};

 \draw[color = red] (-.5,0) -- (-.5, 2);
  \draw[color = cyan] (-.5,2) -- (-.5, 4);
   \draw[color = purple] (-.5,4) -- (-.5, 6);
 \draw[color = green] (-.5,6) -- (-.5, 7.7);

  \filldraw[color = blue] (-.5,0) circle (2pt);
  \filldraw[color = red] (-.5,2) circle (2pt);
   \filldraw[color = cyan] (-.5,4) circle (2pt);
  \filldraw[color = purple] (-.5,6) circle (2pt);

  \draw[color = red] (0,-.5) -- (.592,-.5); 
 \draw[color = cyan] (.592,-.5) -- (2.368,-.5);
 \draw[color = purple] (2.368,-.5) -- (7.703,-.5);
 \draw[color = green] (7.703,-.5) -- (11.7,-.5);

 \draw[color = cyan] (1.5,-.8) node{ $G(4)^2$};
 \draw[color = purple] (5.3,-.8) node{ $G(4)^3$};
 \draw[color = green] (10,-.8) node{ $1$};
 
  \filldraw[color = blue] (0,-.5) circle (2pt);
  \filldraw[color = red] (.592,-.5) circle (2pt);
   \filldraw[color = cyan] (2.368,-.5) circle (2pt);
   \filldraw[color = purple] (7.703,-.5) circle (2pt);

                \end{tikzpicture}

\subsection{Some observations based on the examples} Suppose $L/E$ is a finite Galois extension. 

\begin{remark}
From the definitions and the pictures we see that the maps $(r \mapsto \Group(L/E)_r)$ and $(s \mapsto \Group(L/E)^s)$ are both right continuous.  That is, for all $b \in \R_{\geq 0}$ there exists an $a<b$ such that the maps are constant on $[a,b] \cap \R_{\geq 0}$.
\end{remark}

\begin{remark}  \label{rem:tamesamephi}
Suppose $E/F$ is a finite extension and $L,K$ are finite Galois extensions of $E$ with $K/E$ tamely ramified.  Then $LK/L$ is also tamely ramified and so $\psi_{KL/L}(s) = s$ and $\psi_{K/E}(s) = s$ for all $s \in \R_{\geq 0}$.  Hence,
\[\psi_{L/E} = \psi_{LK/L} \circ \psi_{L/E} = \psi_{LK/E} = \psi_{LK/K} \circ \psi_{K/E} = \psi_{LK/K}. \]
Thus, by taking $K$ to be the maximal tame subextension $L^t$ in $L$, the study of the function $\psi_{L/E}$ can be reduced to understanding the totally wild case; i.e., $\psi_{L/E} = \psi_{L/L^t}$.  We also have $\varphi_{L/E} = \varphi_{L/L^t}$.
\end{remark}

\begin{example}  Suppose $n \in \Z_{>0}$,
and adopt the language of Example~\ref{exc:cyclo}.  If $L = L(n) = \Q_p(\zeta_{p^n})$, then
$L^t = L^{\Group(L/\Q_p)_{0^+}}= L^{G(n)^1} = L(1) = \Q_p(\zeta_p)$. 
Thus, from  Remark~\ref{rem:tamesamephi} we have $\varphi_{L/\Q_p} = \varphi_{L/\Q_p(\zeta_p)}$.  After accounting for normalizations (see Appendix~\ref{app:comparison}), this agrees with~\cite[Lemma~9 and Proposition~10]{MP19}.
\end{example}

The remainder of this section is based on Exercise~\ref{exc:lowerupperjumps}.

\begin{wrapfigure}{r}{0.25\textwidth} 
    \centering
 \begin{tikzpicture}
     \draw (0.0,6.0) node { \Scale[.75]{L}};
   \draw (0,4.0) node { \Scale[.75]{L^{\Group(L/E)_{\ell+}} = K= L^{\Group(L/E)^{u+}}}};
   \draw (0,2.0) node { \Scale[.75]{E}};
  \draw (-1.2,4.2) -- (0.0,5.8);
  \draw (1.2,3.8) -- (0.0,2.2);
 \draw (-1.0,5.2) node { \Scale[.5]{\Group(L/E)_{\ell+}}};
  \draw (1.6,3.0) node { \Scale[.5]{\Group(L/E)/\Group(L/E)^{u+}}};
                \end{tikzpicture}
\end{wrapfigure}
 Choose $u \in \R_{\geq 0}$ such that $\Group(L/E)^{u:u+}$ is nontrivial.   Note that this immediately implies that $\Group(L/E)$ is nontrivial.
 Let $K = L^{\Group(L/E)^{u+}}$.   We have that $L/K$ is Galois with $\Gal(L/K) = \Group(L/K) = \Group(L/E)^{u+}$.  The extension $K/E$ is Galois with inertia subgroup $\Group(K/E) = \Group(L/E)/\Group(L/K)$.  
 Let $\ell = \psi_{L/E}(u)$.  Then $\Group(L/E)_{\ell} = \Group(L/E)^u$ and $\Group(L/E)_{\ell+} = \Group(L/E)^{u+}$ so $\Group(L/E)_{\ell:\ell+}$ is nontrivial.

Among other things, we will show that the upper jumps for $\Group(K/E)$ are exactly the upper jumps of $\Group(L/E)$ that are less than or equal to  $u$ while the lower jumps  for $\Group(L/K)$ are exactly the 
lower jumps of $\Group(L/E)$ that are greater than  $\ell$.

\begin{lemma}  \label{lem:lowerquotient}
Suppose $r \in \R_{\geq 0}$.
If $r \leq \ell$, then $\Group(L/K)_r = \Group(L/K)_{\ell+}$.   If $r > \ell$, then $\Group(L/K)_{r} = \Group(L/E)_{r}$.
\end{lemma}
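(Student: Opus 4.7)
My plan is to reduce the statement to the elementary identity \Cref{lem:3}(4), after first identifying the subgroup $\Group(L/K)$ of $\Gal(L/E)$ in terms of the lower-numbering filtration.

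First I would record the setup: since the depth function is preserved under conjugation by $\Gal(L/E)$ (because valuations are Galois-invariant), each $\Group(L/E)_s$ is normal in $\Gal(L/E)$, and hence so is each $\Group(L/E)^s$ and each $\Group(L/E)^{s+}$. Therefore $K/E$ is Galois, and $\Gal(L/K) = \Group(L/E)^{u+}$. Because $\Gal(L/K)$ sits inside $\Group(L/E)$, every one of its elements acts trivially on $k_L$, so $\Group(L/K) = \Gal(L/K)$. Recalling the relation $\Group(L/E)^{u+} = \Group(L/E)_{\psi_{L/E}(u)+} = \Group(L/E)_{\ell+}$ from \Cref{def:23} (and right-continuity of $\psi_{L/E}$), this gives the key identification
\[
\Group(L/K) = \Group(L/E)_{\ell+}.
\]

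Now I would apply \Cref{lem:3}(4) to the tower $E \leq K \leq L$ to get
\[
\Group(L/K)_r = \Group(L/E)_r \cap \Group(L/K) = \Group(L/E)_r \cap \Group(L/E)_{\ell+}
\]
for every $r \geq 0$, and then split into the two cases. When $r > \ell$, monotonicity of the lower-numbering filtration gives $\Group(L/E)_r \subseteq \Group(L/E)_{\ell+}$, so the intersection is $\Group(L/E)_r$, yielding the second assertion. When $r \leq \ell$, monotonicity runs the other way: $\Group(L/E)_{\ell+} \subseteq \Group(L/E)_r$, so the intersection is $\Group(L/E)_{\ell+}$.

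It then remains to verify that this common value $\Group(L/E)_{\ell+}$ agrees with $\Group(L/K)_{\ell+}$. But by the case just handled, for every $s > \ell$ we have $\Group(L/K)_s = \Group(L/E)_s$, and taking the union over $s > \ell$ gives $\Group(L/K)_{\ell+} = \Group(L/E)_{\ell+}$, as required. The only step that needs care is the normality of the upper-numbering groups in the full Galois group (to ensure $K/E$ is Galois so that \Cref{lem:3}(4) applies); everything else is just bookkeeping with the definitions.
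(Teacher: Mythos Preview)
Your proof is correct and largely parallels the paper's. Both arguments rest on the identification $\Group(L/K)=\Group(L/E)_{\ell+}$ (which the paper records just before the lemma) together with \Cref{lem:3}(4), giving $\Group(L/K)_r=\Group(L/E)_r\cap\Group(L/E)_{\ell+}$, and both handle the case $r\le\ell$ identically. The only difference is in the case $r>\ell$: the paper invokes the exact sequence of \Cref{cor:19} and observes that the quotient $\Group(K/E)_{\varphi_{L/K}(r)}$ is trivial, whereas you simply note that monotonicity gives $\Group(L/E)_r\subseteq\Group(L/E)_{\ell+}$, so the intersection is $\Group(L/E)_r$. Your route is more direct and uniform; the exact sequence is not needed here. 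Your final step, deducing $\Group(L/K)_{\ell+}=\Group(L/E)_{\ell+}$ from the $r>\ell$ case by taking the union over $s>\ell$, makes explicit a point the paper leaves implicit. The preliminary remarks about normality and the identification of $\Group(L/K)$ are already set up in the text preceding the lemma, so you could safely omit them.
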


\begin{proof}
    Since $\Gal(L/K) = \Group(L/K) = \Group(L/E)_{\ell+}$ and 
    \[\Group(L/K)_r = \Group(L/K) \cap \Group(L/E)_r = \Group(L/E)_{\ell+} \cap \Group(L/E)_r,\]
    we conclude that $\Group(L/K)_{r} = \Group(L/E)_{\ell+}$
    for all $0 \leq r \leq \ell$.

Now assume $r > \ell$. 
From Corollary~\ref{cor:19}
we have the exact sequence
\[ 1 \ra \Group(L/K)_r \ra \Group(L/E)_r \ra \Group(K/E)_{\varphi_{L/K}(r)} \ra 1.\]
Note that $r > \ell$ implies $\Group(L/E)_r \leq \Group(L/E)_{\ell+} = \Group(L/K)$, and so 
\(\Group(K/E)_{\varphi_{L/K}(r)} = \Group(L/E)_r \Group(L/K)/\Group(L/K)\)
is trivial.  The result follows.
\end{proof}

\begin{remark}
    Lemma~\ref{lem:lowerquotient} shows that for all $s > \varphi_{L/E}(\ell)$ we have $\Group(L/E)^{s:s+}$ is nontrivial if and only if $\Group(L/K)^{\psi_{K/E}(s):\psi_{K/E}(s)+}$ is nontrivial.  Moreover, $\psi_{K/E}(s) \leq s$ with equality if and only if $\ell = 0$.  That is, the upper numbering jumps for $L/K$ will be smaller than or equal to their corresponding jumps for  $L/E$ with equality if and only if $\ell = 0$.  In particular, the nonzero upper numbering jumps for $L/L^t$ (here, $L^t=L^{\Group(L/E)^{0+}}$) are exactly the nonzero upper numbering jumps for $L/E$.
\end{remark}

\begin{lemma}  \label{lem:upperonquotient}
Suppose $s \in \R_{\geq 0}$.  We have  $\Group(K/E)^{u+}$ is trivial.
If $0 \leq s \leq u$, then $\Group(K/E)^{s:s+} \cong \Group(L/E)^{s:s+}$.
\end{lemma}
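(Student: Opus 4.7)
The plan is to reduce everything to \Cref{lem:upperquot}, which says that $\Group(K/E)^t = \Group(L/E)^t \Group(L/K)/\Group(L/K)$ for all $t \geq 0$, together with the defining identity $\Group(L/K) = \Group(L/E)^{u+}$.

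For the first claim, I would compute directly. Since $\Group(K/E)^t = \Group(L/E)^t \Group(L/K)/\Group(L/K)$ and, by definition of $\Group(L/E)^{u+}$, every $\Group(L/E)^t$ with $t > u$ is contained in $\Group(L/E)^{u+} = \Group(L/K)$, each such $\Group(L/E)^t \Group(L/K)$ collapses to $\Group(L/K)$. Taking the union over $t > u$ then yields
\[
\Group(K/E)^{u+} = \bigcup_{t > u} \Group(L/E)^t \Group(L/K)/\Group(L/K) = \Group(L/K)/\Group(L/K) = 1.
\]

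For the isomorphism, the key observation is that the containment reverses for $s \leq u$: in that range $\Group(L/E)^s \supseteq \Group(L/E)^{u+} = \Group(L/K)$, so $\Group(L/E)^s \Group(L/K) = \Group(L/E)^s$, and \Cref{lem:upperquot} simplifies to
\[
\Group(K/E)^s = \Group(L/E)^s/\Group(L/K).
\]
For $s < u$, I would choose $\epsilon > 0$ small enough that $s + \epsilon < u$; then the same reasoning applied to each $t \in (s, s+\epsilon]$ and a union give $\Group(K/E)^{s+} = \Group(L/E)^{s+}/\Group(L/K)$. The third isomorphism theorem then yields $\Group(K/E)^{s:s+} \cong \Group(L/E)^{s:s+}$.

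The only delicate case is the boundary $s = u$, where $\Group(L/E)^{u+}$ equals $\Group(L/K)$ (possibly nontrivial) while, by the first part, $\Group(K/E)^{u+}$ is trivial. But this is precisely what makes the identification work: combining the first part of this lemma with $\Group(K/E)^u = \Group(L/E)^u/\Group(L/K) = \Group(L/E)^u/\Group(L/E)^{u+}$ gives
\[
\Group(K/E)^{u:u+} = \Group(K/E)^u = \Group(L/E)^u/\Group(L/E)^{u+} = \Group(L/E)^{u:u+},
\]
completing the argument. The main bookkeeping obstacle is simply tracking the direction of the containment between $\Group(L/E)^s$ and $\Group(L/K)$ across the threshold $s = u$; the argument essentially reduces to an application of the second and third isomorphism theorems once this is set up.
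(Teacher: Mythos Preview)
Your proof is correct and takes a genuinely more elementary route than the paper's. The paper sets up, via Exercise~\ref{exc:alternateformsofexact}, a commutative diagram with exact rows
\[
\begin{tikzcd}
1 \arrow[r] & \Group(L/K)^{\psi_{K/E}(s)+} \arrow[r] \arrow[d,hook] & \Group(L/E)^{s+} \arrow[r] \arrow[d,hook] & \Group(K/E)^{s+} \arrow[r] \arrow[d,hook] & 1 \\
1 \arrow[r] & \Group(L/K)^{\psi_{K/E}(s)} \arrow[r] & \Group(L/E)^{s} \arrow[r] & \Group(K/E)^{s} \arrow[r] & 1
\end{tikzcd}
\]
and then argues (implicitly invoking \Cref{lem:lowerquotient}) that the left column is constant, equal to $\Group(L/E)^{u+}$, so the snake lemma gives the isomorphism. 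Your argument bypasses both the exercise and the snake lemma: you observe directly that for $s \leq u$ the containment $\Group(L/E)^s \supseteq \Group(L/E)^{u+} = \Group(L/K)$ collapses \Cref{lem:upperquot} to $\Group(K/E)^s = \Group(L/E)^s/\Group(L/K)$, and then the third isomorphism theorem finishes. The price you pay is a small case split at the boundary $s = u$, which the paper's snake-lemma argument handles uniformly; the gain is that your proof needs nothing beyond \Cref{lem:upperquot} and basic group theory.
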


\begin{proof}
  Since  $\Group(L/K) = \Group(L/E)^{u+}$ and
\[  \Group(K/E)^s = \Group(L/E)^s \Group(L/K)/\Group(L/K),\]
it  follows that $\Group(K/E)^s$ is trivial for all $s > u$.  Thus $\Group(K/E)^{u+}$ is trivial.
Now  assume $s \leq u$. From 
Exercise~\ref{exc:alternateformsofexact}
we can derive the commutative diagram
\[
\begin{tikzcd}
1 \arrow[r] & \Group(L/K)^{\psi_{K/E}(s)+} \arrow[r] \arrow[d,hook] & \Group(L/E)^{s+} \arrow[r] \arrow[d,hook] & \Group(K/E)^{s+} \arrow[r] \arrow[d,hook] &1 \\
1 \arrow[r] &  \Group(L/K)^{\psi_{K/E}(s)} \arrow[r] & \Group(L/E)^s \arrow[r]  & \Group(K/E)^s \arrow[r] &1
\end{tikzcd}
\]
where the vertical maps are injective and the horizontal rows are exact.  Since $\psi_{K/E}(s) \leq s \leq u$, we have
\[\Group(L/K)^{\psi_{K/E}(s)} = (\Group(L/E)^{u+})^{\psi_{K/E}(s)}  = \Group(L/E)^{u^+},\]
and
\[\Group(L/K)^{\psi_{K/E}(s)+}= (\Group(L/E)^{u+})^{\psi_{K/E}(s)}  = \Group(L/E)^{u^+} .\]
Hence $\Group(L/K)^{\psi_{K/E}(s)+} =  \Group(L/K)^{\psi_{K/E}(s)} = \Group(L/E)^{u+}$, and the result follows from the snake lemma.
\end{proof}

\subsection{Upper numbering for the absolute inertia subgroup}

Suppose $E/F$ is an extension.
Set 
\[  I(E) =  \{ K \leq \bar{F} \colon \text{$K/E$ is  finite  and Galois}\}. \]
If $K, L \in I(E)$  with  $K \leq L$, then  we have the natural surjection
\[ s_{KL}  \colon \Group(L/E) = \Group(\bar{F}/E) / \Group(\bar{F}/L) \sra \Group(\bar{F}/E) / \Group(\bar{F}/K) = \Group(K/E). \]
This defines an inverse system, and the inverse limit is the inertia subgroup
\[\Group_E = \Group(\bar{F}/E) =  \lim_{\longleftarrow} \Group(K/E) =  \lim_{\longleftarrow} \Group(\bar{F}/E) / \Group(\bar{F}/K).\]
This is, by definition, a profinite group.  Equivalently, $\Group_E$ is a compact totally disconnected topological group~\cite[II.1.4, Theorem~1]{cassels-frohlich}.   A  neighborhood basis of the identity is given by the set $\{ \Group(\bar{F}/K) \, | \, K \in I(E) \}$.   All profinite groups are automatically Hausdorff: the diagonal in $\Group_E \times \Group_E$ is the preimage of the closed set $\{e\}$ under the map $(x,y) \mapsto x y^{-1}$.

Note that the upper numbering filtration is defined so that for all $t \geq 0$,
\[ s_{KL} [\Group(L/E)^t] = \Group(K/E)^t \text{ and }  s_{KL} [\Group(L/E)^{t+}] = \Group(K/E)^{t+}. \]

\begin{definition}
    For $r \geq 0$ we define
\[\Group_E^r = \lim_{\longleftarrow} \Group(K/E)^r \text{ and } \Group_E^{r+} = \lim_{\longleftarrow} \Group(K/E)^{r+}.\]
\end{definition} 

These are closed subgroups of $\Group_E$~\cite[II.1.4, Corollary~2]{cassels-frohlich}.
  Note that $\Group_E = \Group_E^0 = \Group(\bar{F}/E)$ is the inertia subgroup  and $\Group_E^{0+}$ is the wild inertia subgroup of $W_E$.   
  
   Unlike the Moy--Prasad filtration for $p$-adic groups, it is not true that 
 \(\Group_E^{r+} \) is equal to  \( \bigcup_{s > r} \Group_E^s \).   This is because for every $s > r$, there will be an automorphism of $\bar{F}$ that fixes $\bigcap_{t>r} (\bar{F})^{\Group_E^{t}}$ yet does not live in $\Group_E^s$.  Thanks to the fundamental  theorem of Galois theory we know that the absolute Galois group of $\bigcap_{t>r} (\bar{F})^{\Group_E^{t}}$ is the closure of 
 $\bigcup_{t > r} \Group_E^t$, which, by the next lemma, is $\Group_E^{r+}$.

\begin{lemma}
    For $r \in \R_{\geq 0}$, 
    \[ \Group_E^{r+} 
    = \cl \big(  \bigcup_{s > r} \Group_E^s \big).\]
\end{lemma}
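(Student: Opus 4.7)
The plan is to prove the two inclusions separately; only one is nontrivial.

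For $\supseteq$: every $\Group_E^s$ with $s > r$ projects into $\Group(K/E)^s \subseteq \Group(K/E)^{r+}$ for each $K \in I(E)$, so $\Group_E^s \subseteq \Group_E^{r+}$. Thus $\bigcup_{s>r} \Group_E^s \subseteq \Group_E^{r+}$, and because $\Group_E^{r+}$ is closed, its closure is contained in $\Group_E^{r+}$ as well.

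For $\subseteq$: given $\sigma \in \Group_E^{r+}$, I would show that every basic open neighborhood $\sigma \cdot \Group(\bar{F}/K)$ of $\sigma$, with $K \in I(E)$, meets $\bigcup_{s>r} \Group_E^s$. Fix such a $K$ and let $\bar{\sigma}$ denote the image of $\sigma$ in $\Group(K/E)$. Since $\sigma \in \Group_E^{r+}$, we have $\bar{\sigma} \in \Group(K/E)^{r+} = \bigcup_{s>r} \Group(K/E)^s$, so there exists $s_K > r$ (depending on $K$) with $\bar{\sigma} \in \Group(K/E)^{s_K}$. It then remains to lift $\bar{\sigma}$ to some $\tau \in \Group_E^{s_K}$: such a $\tau$ lies in $\sigma \cdot \Group(\bar{F}/K)$ and in $\bigcup_{s>r} \Group_E^s$, showing $\sigma$ is a limit of such elements.

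The main technical step is that lifting. It uses the surjectivity $s_{K'L'}[\Group(L'/E)^{s_K}] = \Group(K'/E)^{s_K}$ for all towers $K' \leq L'$ in $I(E)$, which was recorded just before the definition of $\Group_E^r$. Consequently $\Group_E^{s_K}$ is the inverse limit of a cofiltered system of finite (hence compact) sets with surjective transition maps, and such an inverse limit surjects onto every finite-level quotient; applying this to the level $K$ produces the required $\tau$.

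I expect the only real conceptual obstacle to be organizing the $K$-dependence of $s_K$: there may be no single $s > r$ that works for all $K$ simultaneously, which is exactly why $\bigcup_{s>r} \Group_E^s$ can fail to equal $\Group_E^{r+}$ on the nose and the closure is needed. Once this is kept in mind, the proof reduces to combining the finite-level surjectivity of the upper-numbering quotient map with the definition of the profinite topology on $\Group_E$.
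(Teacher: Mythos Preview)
Your proof is correct and follows essentially the same strategy as the paper's: project $\sigma$ to a finite level, find some $s_K>r$ with the projection in $\Group(K/E)^{s_K}$, and then lift back using the surjectivity of $\Group_E^{s_K}\to\Group(K/E)^{s_K}$. The only organizational difference is that the paper fixes a countable cofinal chain $K_1\leq K_2\leq\cdots$ in $I(E)$ and produces an explicit sequence $\tau_i\to\sigma$, whereas you work directly with the neighborhood basis $\{\sigma\cdot\Group(\bar F/K)\}$; your framing is slightly cleaner in that it sidesteps the need to justify that $I(E)$ has a countable cofinal chain.
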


\begin{proof}
Since $\Group_E^{r+}$ is closed and $\Group_E^s \subseteq \Group_E^{r+}$ for all $s> r$, it is enough to show  that $\Group_E^{r+}$ is a subset of the closure of $\bigcup_{s>r} \Group_E^s$.

Fix $\sigma \in \Group^{r+}_E$.  Write $\sigma = (\sigma_L)_{L \in I(E)} $. 
Fix\footnote{Sean Cotner points out that we can do this because the number of separable extensions of $E$ is countable. Krasner’s lemma shows that  two separable monic polynomials with nearby coefficients define the same field extension~\cite[Proposition~7.6.1]{milne:ant20}. The space of separable monic polynomials is second-countable (being a product of copies of $E$), so this means there are only countably many separable extensions of a given degree. Thus there are only countably many finite separable extensions, even in positive characteristic.} an increasing sequence 
\[K = K_1 \leq K_2 \leq K_3 \leq \cdots \leq K_n \leq \cdots  \]
with $K_j \in I(E)$ and $\bigcup K_j = \bar{F}$.   To ease notation, define $\sigma_i = \sigma_{K_i}$.   Note that $\sigma = \lim \sigma_i$.

For each $i$ there exists $s_i \in \R_{> r}$ such that $\sigma_i \in \Group(K_i/E)^{s_i}$.   Since for $j \geq i$ we have $\Group(K_j/E)^{s_i} \sra \Group(K_i/E)^{s_i}$, we may assume $s_j \leq s_i$.  Thus, we have a nonincreasing sequence
\[s_1 \geq s_2 \geq \cdots \geq s_n \geq \cdots \]
of nonnegative numbers each of which is greater than $r$.  Since $\Group_E^{s_i}$ surjects onto $\Group(K_i/E)^{s_i}$, we may choose $\tau_i \in \Group_E^{s_i}$ whose image in $\Group(K_i/E)^{s_i}$ is $\sigma_i$.   We then have $\sigma = \lim \tau_i \in \cl (\bigcup_i \Group_E^{s_i}) \subseteq \cl (\bigcup_{s>r} \Group_E^{s})$.
\end{proof}

\begin{lemma}  Let $E/F$ be a finite extension.
\[\bigcap_{s \geq 0} \Group_E^{s} = \bigcap_{s \geq 0} \Group_E^{s+} = 1.\]
\end{lemma}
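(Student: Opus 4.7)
The plan is to reduce both claims to the triviality of the intersection of the lower-numbering ramification groups in each finite Galois quotient, which is \Cref{lem:3}(3). Observe first that $\Group_E^{s+} \subseteq \Group_E^s$ for every $s \geq 0$, so
\[
\bigcap_{s \geq 0} \Group_E^{s+} \subseteq \bigcap_{s \geq 0} \Group_E^s,
\]
and it is therefore enough to show the second intersection is trivial.

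Let $\sigma = (\sigma_K)_{K \in I(E)}$ be any element of $\bigcap_{s \geq 0} \Group_E^s$, viewing $\Group_E = \lim_{\longleftarrow} \Group(K/E)$. To conclude $\sigma = 1$, it suffices by the universal property of the inverse limit to show $\sigma_K = 1$ for every $K \in I(E)$. Fix such a $K$. Since $\Group_E^s = \lim_{\longleftarrow}\Group(K/E)^s$ by definition, the component $\sigma_K$ lies in $\Group(K/E)^s$ for every $s \geq 0$, and the task is to exhibit a single $s$ for which this forces $\sigma_K = 1$.

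By \Cref{def:23}, $\Group(K/E)^s = \Group(K/E)_{\psi_{K/E}(s)}$, where $\psi_{K/E}$ is the inverse of the bijection $\varphi_{K/E} \colon \R_{\geq 0} \to \R_{\geq 0}$; in particular $\psi_{K/E}(s) \to \infty$ as $s \to \infty$. Because $\Group(K/E)$ is a finite group and $\bigcap_{r \geq 0} \Group(K/E)_r = 1$ by \Cref{lem:3}(3), the filtration group $\Group(K/E)_r$ is already trivial whenever $r > \ell(K/E)$. Choosing $s$ large enough that $\psi_{K/E}(s) > \ell(K/E)$ therefore yields $\sigma_K \in \Group(K/E)^s = 1$, as required.

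The argument is essentially routine bookkeeping and I do not anticipate any real obstacle. The one point requiring attention is the passage from ``the intersection $\bigcap_r \Group(K/E)_r$ is trivial'' to ``$\Group(K/E)_r$ is trivial for some single $r$,'' which uses finiteness of each $\Group(K/E)$. Note also that the hypothesis that $E/F$ is a finite extension plays no role in the argument; only the finiteness of each quotient $\Group(K/E)$ matters.
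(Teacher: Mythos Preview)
Your proof is correct and follows essentially the same approach as the paper's: pick $\sigma$ in the intersection, project to each finite quotient $\Group(K/E)$, and use that $\sigma_K \in \Group(K/E)^s$ for all $s$ forces $\sigma_K = 1$. Your version is slightly more detailed than the paper's (you explicitly handle the reduction from the $s+$ intersection to the $s$ intersection and spell out, via \Cref{def:23} and \Cref{lem:3}(3), why $\bigcap_s \Group(K/E)^s = 1$), but the underlying argument is identical.
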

\begin{proof}
    Suppose $\sigma \in \bigcap_{s \geq 0} \Group_E^{s}$.  Then $\sigma \in \Group_E$, and so we write $\sigma = (\sigma_K)_{K \in I(E)}$.  Fix $K \in I(E)$.  Since $\sigma \in \Group_E^s$ for all $s \geq 0$, we have $\sigma_K \in \Group(K/E)^s$ for all $s$, which means $\sigma_K = 1$.  Thus, $\sigma = 1$.
\end{proof}

\subsection{A different different}  \label{sec:di9fferentdifferent}
Suppose $F \leq E \leq L$ is a tower of finite separable extensions.
The normalized differental exponent of $L/E$ is defined by
    \[
    d(L/E):= \max \{r \in e(L/F)^{-1}\cdot\bbZ \mid
    \Tr_{L/E}[L_{\geq -r}] \subseteq E_{\geq 0} \}.
    \]
In other words, $d(L/E)$ is $e(L/F)^{-1}$ times the usual differental exponent.
The normalized differental exponent  answers the question:
What is the largest $R_L$-lattice $\mathcal{L} \subseteq L$ 
that satisfies  $\Tr_{L/E}[\mathcal{L}] = R_E$? 
We now introduce a measure of the smallest lattice in $L$ with this property, namely
the \emph{compressed differental exponent}
\[
c_{L/E} := d(L/E) - e(E/F)^{-1} + e(L/F)^{-1} \leq d(L/E).
\]

\begin{remark} \label{different-additivity}
It follows from~\cite[VIII.1 Corollary~4]{We95} that for a tower of finite separable extensions $F \leq E \leq K \leq L$ we have 
$$d(L/E) = d(L/K) + d(K/E).$$
After some manipulation this yields
$$c_{L/E} = c_{L/K} + c_{K/E}.$$
\end{remark}

\begin{lemma} \label{thm10}
$c_{L/E}$ is the unique $c\in\bbR$
such that for all $r\in \bbR$,
\[
\Tr_{L/E}[L_{\geq r}] = E_{\geq r+c}.
\]
\end{lemma}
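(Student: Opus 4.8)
The plan is to verify directly that $c=c_{L/E}$ has the stated property, and then to read off uniqueness by specializing the identity. For uniqueness: if $c$ works, then setting $r=-c$ gives $\Tr_{L/E}[L_{\geq -c}]=E_{\geq 0}=R_E$; if $c'$ also works, then this same lattice equals $E_{\geq c'-c}$, so $E_{\geq c'-c}=R_E$. Since $E_{\geq t}=R_E$ precisely for $t\in(-e(E/F)^{-1},0]$, this forces $c'-c\in(-e(E/F)^{-1},0]$, and interchanging the roles of $c$ and $c'$ then forces $c=c'$.

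For existence I would first observe that $M:=\Tr_{L/E}[L_{\geq r}]$ is a nonzero fractional ideal of $E$: it is an $R_E$-submodule of $E$ by $E$-linearity of the trace, it is nonzero because $\Tr_{L/E}$ is surjective (separability of $L/E$), and it is bounded below in valuation because conjugates of an element share its valuation, so $M\subseteq E_{\geq r}$. Hence $M=E_{\geq s(r)}$ for a unique $s(r)\in e(E/F)^{-1}\Z$, and the task is to compute $s(r)$ and match it against $E_{\geq r+c_{L/E}}$. The first reduction is to containments in $R_E$: for $t\in e(E/F)^{-1}\Z$ choose $a\in E$ with $\val(a)=t$; multiplication by $a^{-1}$ is a valuation-shifting bijection of $L$ that, by $E$-linearity, carries $\Tr_{L/E}[L_{\geq r}]$ to $\Tr_{L/E}[L_{\geq r-t}]$ and $E_{\geq t}$ to $R_E$. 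Thus $\Tr_{L/E}[L_{\geq r}]\subseteq E_{\geq t}$ if and only if $\Tr_{L/E}[L_{\geq r-t}]\subseteq R_E$.

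Next I would unwind the right-hand side using the definition of $d(L/E)$ itself. With $d:=d(L/E)$ and $\mathfrak{d}:=e(L/F)\cdot d\in\Z$, the definition says $\Tr_{L/E}[L_{\geq -d}]\subseteq R_E$ while $\Tr_{L/E}[L_{\geq -d-e(L/F)^{-1}}]\not\subseteq R_E$; since $L_{\geq x}=\varpi_L^{\lceil e(L/F)\,x\rceil}R_L$ and the fractional ideals of $L$ form a chain, it follows that $\Tr_{L/E}[L_{\geq x}]\subseteq R_E$ iff $\lceil e(L/F)\,x\rceil\geq -\mathfrak{d}$, i.e. iff $x>-d-e(L/F)^{-1}$. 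Feeding this into the reduction, for $t\in e(E/F)^{-1}\Z$ we get $\Tr_{L/E}[L_{\geq r}]\subseteq E_{\geq t}$ iff $t<r+d+e(L/F)^{-1}$, so $s(r)$ is the largest element of $e(E/F)^{-1}\Z$ lying strictly below $r+d+e(L/F)^{-1}$. In particular $r+d+e(L/F)^{-1}$ lies in $\bigl(s(r),\,s(r)+e(E/F)^{-1}\bigr]$, hence $r+c_{L/E}=\bigl(r+d+e(L/F)^{-1}\bigr)-e(E/F)^{-1}$ lies in $\bigl(s(r)-e(E/F)^{-1},\,s(r)\bigr]$; since $E_{\geq x}=E_{\geq s(r)}$ for every $x$ in that interval, we conclude $E_{\geq r+c_{L/E}}=E_{\geq s(r)}=\Tr_{L/E}[L_{\geq r}]$, completing the proof.

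The only thing requiring care is the arithmetic with the ceiling function and the half-open intervals, which encode exactly where the valuation groups $\val(L^\times)$ and $\val(E^\times)$ line up; there is no genuine obstacle, since the one piece of substantive input — that the trace shifts these valuation filtrations by a single constant, governed by the different — is precisely what the definition of $d(L/E)$ packages. (Alternatively one could deduce the statement from the tower additivity $c_{L/E}=c_{L/K}+c_{K/E}$ of \Cref{different-additivity} by reducing to unramified, tame, and degree-$p$ extensions, but the direct argument above is shorter.)
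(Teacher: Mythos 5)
Your proposal is correct and takes essentially the same approach as the paper: both reduce, via multiplication by elements of $E$ with valuation in $e(E/F)^{-1}\Z$, to the defining containment for $d(L/E)$, and both use $R_E$-linearity plus surjectivity of the trace to know that $\Tr_{L/E}[L_{\geq r}]$ is a fractional ideal of~$E$. The only cosmetic difference is that you compute $s(r)$ explicitly via the ceiling function, whereas the paper reduces to checking the two endpoints $r=-d$ and $r=-c$ and invokes the maximality in the definition of~$d$ for the latter.
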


\begin{proof}
Uniqueness is clear: since the Moy--Prasad filtration on $E$ is not constant,
if $E_{\geq r + c} = E_{\geq r + c'}$ for all $r$ then $c=c'$.
Moreover, since the jumps in the filtrations on $L$ and~$E$
lie in $e(L/F)^{-1}\cdot\bbZ$,
we must have that $c_{L/E}\in e(L/F)^{-1}\cdot\bbZ$ as well.
It remains to show that $c_{L/E}$ satisfies the claimed property.

Let $s,s'\in\bbR$.
We first make a few observations
about the effect of $\Tr_{L/E}$ on the fractional ideals of~$L$,
using the $R_E$-linearity and surjectivity of $\Tr_{L/E}\colon L\to E$
together with the relations
$\varpi_E\cdot L_{\geq s} = L_{\geq s+e(E/F)^{-1}}$
and $\varpi_E\cdot E_{\geq s'} = E_{\geq s'+e(E/F)^{-1}}$.
\begin{itemize}
\item
For every $s$ there is some (non-unique) $s'$
such that $\Tr_{L/E}[L_{\geq s} ] = E_{\geq s'}$.

\item
For every $s'$ there is some (non-unique) $s$ 
such that $\Tr_{L/E}[L_{\geq s}] = E_{{\geq s'}}$.

\item
If $\Tr_{L/E}[L_{\geq s}] = E_{{\geq s'}}$
then $\Tr_{L/E}[L_{\geq s + e(E/F)^{-1}}] = E_{\geq s'+e(E/F)^{-1}}$.
\end{itemize}
The problem is to nicely relate such $s$ and~$s'$,
which we may, without loss of generality,  assume lie in $e(L/F)^{-1}\cdot\bbZ$.
Let $d = d(L/E)$ and $c = c_{L/E}$.

Every element of $e(L/F)^{-1}\cdot\bbZ$
can be placed in the interval $[-d,-c]$
after translating by an element of $e(E/F)^{-1}\cdot\bbZ$.
Therefore, using the periodicity property of the filtrations on $L$ and~$E$
under multiplication by $\varpi_E$,
it suffices to show that if $-d\leq r \leq -c$ then $\Tr_{L/E}[L_{\geq r}] = E_{\geq 0}$.
It is enough to check the claim at the endpoints $r=-d$ and $r=-c$.
Certainly $\Tr_{L/E}[L_{\geq -d}] = E_{\geq 0}$ by definition of~$d$,
so that $\Tr_{L/E}[L_{\geq -c}]\subseteq E_{\geq 0}$ also.
If $\Tr_{L/E}[L_{\geq -c}]\subsetneq E_{\geq 0}$,
so that $\Tr_{L/E}[L_{\geq -c}]\subseteq E_{>0}$,
then $\Tr_{L/E}[L_{\geq -c - e(E/F)^{-1}}] \subseteq E_{\geq 0}$,
but this would contradict the maximality in the definition of~$d$
since $c + e(E/F)^{-1} = d + e(L/F)^{-1} > d$.
Hence $\Tr_{L/E}[L_{\geq -c}] = E_{\geq 0}$.
\end{proof}

\begin{corollary}
$c_{L/E} = \min\{r\in e(L/F)^{-1}\cdot\bbZ
\mid \Tr_{L/E}[L_{\geq -r}] = E_{\geq0}\}$. \pushQED{\qed} 
  \qedhere 
    \popQED 
\end{corollary}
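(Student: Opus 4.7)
The plan is to deduce this corollary almost entirely from \Cref{thm10}, which already does the heavy lifting by identifying $c_{L/E}$ as the unique real number $c$ satisfying $\Tr_{L/E}[L_{\geq r}] = E_{\geq r+c}$ for all $r \in \bbR$. Two things then need verification: that $c_{L/E}$ lies in the indicated set, and that nothing smaller does.

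First, specializing \Cref{thm10} to $r = -c_{L/E}$ yields $\Tr_{L/E}[L_{\geq -c_{L/E}}] = E_{\geq 0}$, so $c_{L/E}$ satisfies the defining equation of the set. Membership then follows once one notes that $c_{L/E} \in e(L/F)^{-1}\cdot\bbZ$; this was already recorded inside the proof of \Cref{thm10} (as a consequence of $d(L/E) \in e(L/F)^{-1}\cdot\bbZ$ together with $e(E/F)^{-1}, e(L/F)^{-1} \in e(L/F)^{-1}\cdot\bbZ$), so I would simply cite it rather than redo the observation.

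For minimality, I would argue by contradiction: suppose $r \in e(L/F)^{-1}\cdot\bbZ$ with $r < c_{L/E}$ and $\Tr_{L/E}[L_{\geq -r}] = E_{\geq 0}$. Applying \Cref{thm10} gives $\Tr_{L/E}[L_{\geq -r}] = E_{\geq c_{L/E} - r}$ with $c_{L/E} - r > 0$. The step that needs a sentence of justification is that $E_{\geq s} \subsetneq E_{\geq 0}$ whenever $s > 0$: this holds because the filtration on $E$ has a jump at every point of $e(E/F)^{-1}\cdot\bbZ_{\geq 0}$ (and in particular at the first positive such point), so any strictly positive threshold forces a strict containment. Combining, we would obtain $E_{\geq 0} = E_{\geq c_{L/E} - r} \subsetneq E_{\geq 0}$, a contradiction.

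There is really no serious obstacle here: the corollary is essentially a bookkeeping consequence of \Cref{thm10}, and the only thing one must be careful about is that ``$s > 0$'' genuinely forces a proper inclusion of the Moy--Prasad pieces of~$E$, which follows from the discreteness of the filtration on $E$.
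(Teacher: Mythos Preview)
Your proposal is correct and matches the paper's approach: the paper records the corollary with an immediate \qed, treating it as a direct consequence of \Cref{thm10}, and your write-up simply makes explicit the two verifications (membership and minimality) that the paper leaves to the reader.
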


We can also use \Cref{thm10}
to explain the change in the depth of an additive character
upon base change.
    Fix a nontrivial continuous additive character $\Lambda=\Lambda_F:F\ra\Cc$.
For any finite separable extension $K/F$, we define $\Lambda_K:=\Lambda_F\circ\Tr_{K/F}:K\ra\Cc$. Since $\Tr_{K/F}:K\ra F$ is surjective, we have that $\Lambda_K$ is also a nontrivial continuous additive character. 
Define
\[
\depth(\Lambda_K):=\max\{ \val(x)\,|\, x\in K,\;\Lambda_K(x)\not=1\}.
\]

\begin{corollary}
$\depth(\Lambda_L) = \depth(\Lambda_E) + c_{L/E}$.  \pushQED{\qed} 
  \qedhere 
    \popQED 
\end{corollary}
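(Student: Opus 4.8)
The goal is to compute $\depth(\Lambda_L)$ in terms of $\depth(\Lambda_E)$ and the compressed different $c_{L/E}$. The plan is to unwind the definition of $\depth(\Lambda_K)$ in terms of the Moy--Prasad filtration. Observe first that $\depth(\Lambda_K)$ is precisely the largest real number $r$ such that $\Lambda_K$ is nontrivial on $K_{\geq r}$; equivalently, since the filtration jumps, $\depth(\Lambda_K)$ is characterized by the condition that $\Lambda_K$ kills $K_{> \depth(\Lambda_K)}$ but not $K_{\geq \depth(\Lambda_K)}$.

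Next I would translate the relation $\Lambda_L = \Lambda_E \circ \Tr_{L/E}$ into a statement about which fractional ideals $\Lambda_L$ annihilates. For a fractional ideal $L_{\geq r}$, we have $\Lambda_L$ trivial on $L_{\geq r}$ if and only if $\Lambda_E$ is trivial on $\Tr_{L/E}[L_{\geq r}]$, which by \Cref{thm10} equals $E_{\geq r + c_{L/E}}$. So $\Lambda_L|_{L_{\geq r}}$ is trivial precisely when $\Lambda_E|_{E_{\geq r + c_{L/E}}}$ is trivial, i.e.\ precisely when $r + c_{L/E} > \depth(\Lambda_E)$. Taking the supremum over $r$ with $\Lambda_L|_{L_{\geq r}}$ nontrivial (and using that the filtration on $L$ takes all sufficiently fine values near the relevant jump, so the supremum is attained as a jump of the $L$-filtration), we get $\depth(\Lambda_L) = \depth(\Lambda_E) + c_{L/E}$.

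The one point requiring a little care — and the main (minor) obstacle — is matching the ``$\geq$ versus $>$'' conventions at the jump: $\depth$ is defined as a \emph{max} over $x$ with $\Lambda_K(x) \neq 1$, so I should check that $\Lambda_L$ is nontrivial on $L_{\geq r}$ for all $r \leq \depth(\Lambda_E) - c_{L/E}$ and trivial for all $r > \depth(\Lambda_E) - c_{L/E}$, which is immediate from the displayed equivalence once one knows $\depth(\Lambda_K)$ is itself a jump of the filtration on $K$ (so that $L_{\geq r}$ for $r$ slightly above a jump equals $L_{> \text{jump}}$). Since the jumps of both filtrations lie in discrete sets, this bookkeeping is routine, and the corollary follows.
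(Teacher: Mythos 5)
Your approach is exactly the intended one: the corollary is stated without proof because it is meant to be an immediate consequence of Lemma~\ref{thm10}, and your strategy of pushing $\Lambda_L = \Lambda_E\circ\Tr_{L/E}$ through the identity $\Tr_{L/E}[L_{\geq r}] = E_{\geq r + c_{L/E}}$ is precisely how one should read it off.

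There is, however, a sign error in your final step, and it is one your own intermediate computations contradict. You correctly establish that $\Lambda_L|_{L_{\geq r}}$ is trivial precisely when $r + c_{L/E} > \depth(\Lambda_E)$, i.e.\ precisely when $r > \depth(\Lambda_E) - c_{L/E}$, and your parenthetical check says the same thing: nontrivial for $r \leq \depth(\Lambda_E) - c_{L/E}$, trivial for $r > \depth(\Lambda_E) - c_{L/E}$. But the supremum (indeed maximum) of such $r$ is then $\depth(\Lambda_E) - c_{L/E}$, not $\depth(\Lambda_E) + c_{L/E}$. Since $c_{L/E}\geq 0$ with equality exactly when $L/E$ is tame, the two differ whenever $L/E$ is wildly ramified. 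A concrete check: for $E = \Q_2$, $L = \Q_2(\sqrt2)$ one has $c_{L/E}=1$ and $\Tr_{L/E}(a+b\sqrt2)=2a$, so if $\depth(\Lambda_E)=0$ then $\Lambda_L$ is trivial on $R_L = L_{\geq 0}$ and nontrivial on $\tfrac12 R_L = L_{\geq -1}$, giving $\depth(\Lambda_L) = -1 = \depth(\Lambda_E) - c_{L/E}$. So the sign in your conclusion (and, it appears, in the corollary as printed, given the stated definition of $\depth(\Lambda_K)$ as a max of valuations) should be a minus. A smaller remark: the worry about ``$\geq$ versus $>$'' at the jump is unnecessary here, because the max definition already gives the clean equivalence ``$\Lambda_K$ nontrivial on $K_{\geq r}$ iff $r\leq\depth(\Lambda_K)$'' for all real $r$; combined with the \emph{equality} (not just containment) in Lemma~\ref{thm10}, this yields $r\leq\depth(\Lambda_L) \Longleftrightarrow r\leq \depth(\Lambda_E)-c_{L/E}$ for every $r$, with no boundary bookkeeping.
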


To finish, and to set up a connection with the norm map
that we develop in the next section,
we explain the relationship between $c_{L/E}$
and the Herbrand function.

\begin{lemma}[{\cite[IV.2 Proposition~4]{serre:local}}] \label{depth-sum}
$c_{L/E} = \sum_{1 \neq \sigma \in \Group(L/E)} \depth_{L/E}(\sigma)$.\pushQED{\qed} 
  \qedhere 
    \popQED 
\end{lemma}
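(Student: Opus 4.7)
The plan is to reduce to Serre's classical formula for the (unnormalized) differental exponent, namely
\[
v_L(\mathfrak{d}_{L/E}) = \sum_{i \geq 0}\bigl(\lvert G_i\rvert - 1\bigr),
\]
where $v_L$ is the valuation with $v_L(\varpi_L) = 1$ and $G_i$ denotes the $i$-th lower numbering ramification group in Serre's indexing; this is~\cite[IV.1, Proposition 4]{serre:local}. As already noted in the proof of \Cref{lem:lemma5}, the two indexings are related by $G_i = \Group(L/E)_{i/e(L/F)}$, and the two valuations by $\val = v_L/e(L/F)$, so $d(L/E) = v_L(\mathfrak{d}_{L/E})/e(L/F)$.

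The second step is to rewrite $\sum_{\sigma \neq 1} \depth_{L/E}(\sigma)$ by counting layers. By \Cref{lem:3}(1), for $\sigma \neq 1$ the depth lies in $e(L/F)^{-1}\cdot\Z_{\geq 0}$; writing $\depth_{L/E}(\sigma) = i_\sigma/e(L/F)$, the tautology $i_\sigma = \sum_{j \geq 1}[\sigma \in G_j]$ together with a swap of summation order gives
\[
\sum_{1 \neq \sigma \in \Group(L/E)} \depth_{L/E}(\sigma)
= \frac{1}{e(L/F)} \sum_{j \geq 1} \bigl(\lvert G_j\rvert - 1\bigr),
\]
where the $-1$ accounts for the identity, which lies in every $G_j$.

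The last step is to match this against $c_{L/E}$ by extracting the $j = 0$ term from Serre's formula. Since $\lvert G_0\rvert = \lvert\Group(L/E)\rvert = e(L/E)$ and $e(L/F) = e(L/E)\,e(E/F)$,
\[
d(L/E) - \sum_{1\neq\sigma\in\Group(L/E)} \depth_{L/E}(\sigma)
= \frac{\lvert G_0\rvert - 1}{e(L/F)}
= \frac{1}{e(E/F)} - \frac{1}{e(L/F)},
\]
which rearranges to precisely the definition of $c_{L/E}$.

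The genuine content is entirely contained in the cited classical formula of Serre; the rest is a translation of normalizations, with the only conceptual point being that passing from $d(L/E)$ to $c_{L/E}$ corresponds exactly to removing the $\sigma = 1$ contribution in the unnormalized sum. The main obstacle, if any, is purely notational: keeping the factors of $e(L/F)$, $e(L/E)$, and $e(E/F)$ straight.
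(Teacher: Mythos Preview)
Your proof is correct. The paper does not actually give a proof of this lemma: it simply cites Serre and places a \textsc{qed}, leaving the translation between Serre's classical normalization and the paper's normalized quantities $d(L/E)$, $c_{L/E}$, and $\depth_{L/E}$ entirely to the reader. Your argument fills in exactly those details, and does so correctly; in particular, your observation that the passage from $d(L/E)$ to $c_{L/E}$ corresponds to dropping the $j=0$ term (equivalently, the ``$\sigma=1$ contribution'' in the unnormalized count) is the right way to see why the compressed differental exponent is the natural object here. One minor remark: the formula $v_L(\mathfrak{d}_{L/E}) = \sum_{i\geq 0}(|G_i|-1)$ that you invoke is Proposition~4 of \S IV.1 in Serre (as you write), whereas the paper's citation points to \S IV.2; your reference is the correct one for the statement you use.
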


\begin{corollary}  \label{cor:slargeandc}
  If $s \in \R_{\geq 0}$ satisfies $s \geq \ell(L/E)$, then $\varphi_{L/E}(s) = s + c_{L/E}$.  
\end{corollary}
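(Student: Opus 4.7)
The plan is to unpack the definition of $\varphi_{L/E}(s)$ and separate the sum into the contribution from the identity and the contributions from all other elements of $\Group(L/E)$.

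Recall from \Cref{def:newphi} that
\[
\varphi_{L/E}(s) = \sum_{\sigma\in\Group(L/E)} \min\bigl(\depth_{L/E}(\sigma), s\bigr).
\]
First I would handle the identity element: since $\depth_{L/E}(1) = \infty$, its contribution is simply $\min(\infty,s) = s$. Next I would observe that by the definition of $\ell(L/E)$ as the infimum over $r$ with $\Group(L/E)_r$ trivial (together with \Cref{lem:3}(1), which ensures this infimum is attained), every nontrivial $\sigma \in \Group(L/E)$ satisfies $\depth_{L/E}(\sigma) \leq \ell(L/E)$. Combined with the hypothesis $s \geq \ell(L/E)$, this gives $\min\bigl(\depth_{L/E}(\sigma), s\bigr) = \depth_{L/E}(\sigma)$ for each $\sigma \neq 1$.

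Adding the two contributions yields
\[
\varphi_{L/E}(s) = s + \sum_{1\neq\sigma\in\Group(L/E)} \depth_{L/E}(\sigma),
\]
and the proof concludes by invoking \Cref{depth-sum}, which identifies the sum on the right with $c_{L/E}$.

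There is no real obstacle here: the argument is essentially a bookkeeping verification that $\varphi_{L/E}$ becomes linear with slope $1$ past the last jump, and that the constant shift produced is precisely the compressed differental exponent as computed by Serre's formula in \Cref{depth-sum}.
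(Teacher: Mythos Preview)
Your proof is correct and follows essentially the same approach as the paper: both unpack \Cref{def:newphi}, separate off the identity term (which contributes~$s$), observe that for nontrivial $\sigma$ the hypothesis $s\geq\ell(L/E)$ forces $\min(\depth_{L/E}(\sigma),s)=\depth_{L/E}(\sigma)$, and then invoke \Cref{depth-sum}. Your write-up is slightly more explicit in justifying $\depth_{L/E}(\sigma)\leq\ell(L/E)$ for $\sigma\neq1$, but the argument is the same.
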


\begin{proof}
Using the definition of $\varphi_{E/F}$  (see Definition~\ref{def:newphi}),
we see that
\begin{equation*}
    \begin{split}
      \varphi_{L/E}(s) -s &= - s +  \varphi_{L/E}(s) \\
      &= - s + \sum_{\sigma \in \Group(L/E)} \min(\depth_{L/E} (\sigma), s)  \\
      &=  \sum_{\sigma \in \Group(L/E) \setminus \{1\} } \depth_{L/E} (\sigma).  \qedhere
    \end{split}
\end{equation*}
\end{proof}

\begin{remark}
    We conclude from Corollary~\ref{cor:slargeandc} that $c_{L/E} \geq 0$ with equality if and only if $L/E$ is tamely ramified.
\end{remark}

\subsection{Some results about norm maps}

If $L/E$ is a finite Galois extension, then for all $s \in \R$ we have
\[ \Tr_{L/E}[L_{\geq s}] = E_{\geq (s + c_{L/E})} .\]
Since this is true for all $s$, we have a surjection, also denoted $\Tr_{L/E}$,
\[  \Tr_{L/E} \colon L_{= s} \sra E_{=(s + c_{L/E})}. \]

There are similar, but more subtle, results for the norm map $\Nm_{L/F} \colon L^\times \ra F^\times$.  For example, thanks to~\cite[V.2~Proposition 2 and V.6~Proposition 8]{serre:local}, for all $s \in \R_{>0}$ we have
\[ \Nm_{L/E}[L^{\times}_{\geq s}] =\Nm_{L^u/E}[\Nm_{L/L^u}[L^{\times}_{\geq s}]] \subseteq \Nm_{L^u/E}[L^{u \times}_{\geq \varphi_{L/L^u}(s)} ] = E^{\times}_{\geq \varphi_{L/L^u}(s)}.\]
Here $L^u/E$ denotes the maximal unramified subextension of $E$ in $L$. 
Since $\varphi_{L^u/E}(t) = t$ for all $t \geq 0$, we conclude that $\Nm_{L/E}[L^\times_{\geq s}] \subseteq E^\times_{\geq \varphi_{L/E}(s)}$.  Since this is true for all $s \in \R_{>0}$,  we have a homomorphism, also denoted $\Nm_{L/E}$,
\[  \Nm_{L/E} \colon L^\times_{= s} \longrightarrow E^\times_{=\varphi_{L/E}(s)}. \]

\begin{lemma} \label{lem:normrange}
   Suppose $s \geq 0$.
   \begin{enumerate}
       \item \label{it:norm1} The homomorphism 
    \[  \Nm_{L/E} \colon L^\times_{\geq s} \longrightarrow E^\times_{\geq \varphi_{L/E}(s)}\]
    is surjective if and only if either $L$ is unramified and $s \geq 0$ or $\Group(L/E)$ is nontrivial and $s > \ell(L/E)$.
    \item \label{it:norm2}  Suppose $s > \ell(L/E)$. If $x \in L_{\geq s}$, then $\Nm_{L/E}(1+x) = 1 +\Tr_{L/E}(x)$ modulo $E^\times_{> \varphi_{L/E}(s)}$.  Since   we know from Corollary~\ref{cor:slargeandc} that $s > \ell(L/E)$ implies $\varphi_{L/E}(s) = s + c_{L/E}$,  we have the following commutative diagram
\[
\begin{tikzcd}
L_{=s}\arrow[rr, "\Tr_{L/E}"]\arrow[d, "\rotatebox{90}{$\sim$}"]& &E_{=(s+c_{L/E})}\arrow[d, "\rotatebox{90}{$\sim$}"]\\
L^{\times}_{=s}\arrow[rr, "\Nm_{L/E}"]& &E^{\times}_{=(s + c_{L/E})}
\end{tikzcd}
\]
 in which the vertical maps are the $x \mapsto 1 + x$ isomorphisms.
   \end{enumerate}
\end{lemma}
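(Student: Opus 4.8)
The plan is to prove part (2) first — this is the substantive statement, the ``more subtle'' result — and then read off part (1). Note that the commutative diagram in part (2) is merely a repackaging of the congruence $\Nm_{L/E}(1+x)\equiv 1+\Tr_{L/E}(x)\pmod{E^{\times}_{>\varphi_{L/E}(s)}}$: by \Cref{cor:slargeandc} we have $\varphi_{L/E}(s)=s+c_{L/E}$ for $s>\ell(L/E)$, and $\Tr_{L/E}(x)\in E_{\geq s+c_{L/E}}$ since $\Tr_{L/E}[L_{\geq s}]=E_{\geq s+c_{L/E}}$, so the two vertical $1+(-)$ isomorphisms intertwine the graded trace with the graded norm exactly when the congruence holds. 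Thus everything reduces to the congruence, which I would establish by induction on $[L:E]$.

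\textbf{The inductive step.} If $[L:E]$ is composite then $\Gal(L/E)$ is not simple — a finite Galois group of local fields with no proper nontrivial normal subgroup is cyclic of prime order, because its inertia subgroup is normal and solvable with cyclic quotient — so pick a proper nontrivial normal $N\trianglelefteq\Gal(L/E)$ and set $K=L^{N}$, yielding a tower $L/K/E$ of Galois extensions with $[L:K],[K:E]<[L:E]$. By \Cref{cor:exact2}, $s>\ell(L/E)$ forces $s>\ell(L/K)$ and $\varphi_{L/K}(s)>\ell(K/E)$. Applying the inductive hypothesis to $L/K$ at level $s$ writes $\Nm_{L/K}(1+x)=(1+\Tr_{L/K}(x))\,u$ with $u\in K^{\times}_{>\varphi_{L/K}(s)}$ and $\Tr_{L/K}(x)\in K_{\geq\varphi_{L/K}(s)}$. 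Now apply $\Nm_{K/E}$ and use $\Nm_{L/E}=\Nm_{K/E}\circ\Nm_{L/K}$, the inductive hypothesis for $K/E$ at level $\varphi_{L/K}(s)$, the identities $\Tr_{K/E}\circ\Tr_{L/K}=\Tr_{L/E}$ and $\varphi_{K/E}\circ\varphi_{L/K}=\varphi_{L/E}$ (\Cref{lem:12}), and the range inclusion $\Nm_{K/E}[K^{\times}_{\geq t}]\subseteq E^{\times}_{\geq\varphi_{K/E}(t)}$ together with strict monotonicity of $\varphi_{K/E}$ to absorb the factor $\Nm_{K/E}(u)$; this gives the congruence for $L/E$.

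\textbf{The base case $[L:E]$ prime.} Write $\Nm_{L/E}(1+x)=\prod_{\sigma\in\Gal(L/E)}(1+\sigma x)=\sum_{k}e_{k}$, where $e_{k}$ is the $k$-th elementary symmetric function of $\{\sigma x\}_{\sigma}$, so $e_{0}=1$ and $e_{1}=\Tr_{L/E}(x)$; the claim is that every $e_{k}$ with $k\geq 2$ lies in $E_{>\varphi_{L/E}(s)}$. If $L/E$ is tamely ramified or unramified then $c_{L/E}=0$, $\varphi_{L/E}=\operatorname{id}$, $\ell(L/E)=0$, and the crude estimate $\val(e_{k})\geq k\,\val(x)\geq 2s>s=\varphi_{L/E}(s)$ suffices. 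If $L/E$ is wildly ramified then $[L:E]=p$ and $L/E$ is totally ramified, the crude bound is \emph{not} enough, and the key is to use the power sums $p_{j}:=\sum_{\sigma}(\sigma x)^{j}=\Tr_{L/E}(x^{j})\in E_{\geq js+c_{L/E}}$: for $2\leq k\leq p-1$, Newton's identities express $e_{k}$ (valid because $k<p$) as a $\bbZ[1/k!]$-polynomial in $p_{1},\dots,p_{k}$ with no constant term, so $\val(e_{k})\geq ks+c_{L/E}>s+c_{L/E}=\varphi_{L/E}(s)$; and $e_{p}=\Nm_{L/E}(x)$ has $\val(e_{p})=p\,\val(x)\geq ps>s+c_{L/E}$ since $c_{L/E}=\sum_{1\neq\sigma}\depth_{L/E}(\sigma)<(p-1)s$ (each depth is $\leq\ell(L/E)<s$). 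Working with the $e_{k}$ directly rather than expanding $(1+x)^{p}$ is precisely what avoids the $\binom{p}{k}$- and $px$-terms, so no bound on $\val(p)$ is needed. This is the step I expect to be the crux of part (2).

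\textbf{Part (1).} For sufficiency: surjectivity of $\Nm_{L/E}\colon L^{\times}_{\geq s}\to E^{\times}_{\geq\varphi_{L/E}(s)}$ follows by successive approximation (using completeness of $L$ and discreteness of the jumps) once every graded map $L^{\times}_{=t}\to E^{\times}_{=\varphi_{L/E}(t)}$ with $t\geq s$ is surjective; for $L/E$ unramified these are $\Nm_{k_{L}/k_{E}}$ at $t=0$ and (by part (2), since $t>0=\ell(L/E)$) the trace map $k_{L}\to k_{E}$ for $t>0$, both surjective, and for $\Group(L/E)\neq 1$ with $s>\ell(L/E)$ every $t\geq s$ exceeds $\ell(L/E)$, so part (2) identifies the graded map with the one induced by $\Tr_{L/E}\colon L_{=t}\twoheadrightarrow E_{=(t+c_{L/E})}$, again surjective. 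For necessity one argues conversely that surjectivity of $\Nm_{L/E}$ on $L^{\times}_{\geq s}$ makes every graded map surjective for $t\geq s$, and then shows this fails when $L/E$ is ramified and $s\leq\ell(L/E)$: in the wildly ramified case one reduces via $L/L^{N}$ (with $N$ a normal $p$-subgroup) to a single-break extension and computes the graded norm at the top break, where genuinely nonadditive $p$-power contributions obstruct surjectivity, while in the remaining tamely ramified case ($s=0$) the obstruction lives in the valuation and is read off from $[E^{\times}:\Nm_{L/E}(L^{\times})]=|\Gal(L/E)^{\ab}|$ together with $\val_{E}(\Nm_{L/E}(\varpi_{L}))=f(L/E)$. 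Besides the wild base case of part (2), pinning down this graded-norm computation at a wild break is the other place that needs care.
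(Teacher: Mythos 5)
Your proof of part~(2) is correct, and the wildly ramified prime base case is handled by a genuinely different device than the paper uses. The paper reduces the prime cyclic case to Serre, \emph{Local Fields} V.3 Proposition~5, whereas you expand $\Nm_{L/E}(1+x)=\sum_k e_k$ and bound the elementary symmetric functions directly via the power sums $p_j=\Tr_{L/E}(x^j)\in E_{\geq js+c_{L/E}}$ and Newton's identities. The key observations that make this work --- that $k!$ is a unit for $k<p$ so the valuation estimate $\val(e_k)\geq ks+c_{L/E}$ survives the division, and that $\val(e_p)=p\,\val(x)\geq ps>s+c_{L/E}$ because $c_{L/E}\leq(p-1)\ell(L/E)<(p-1)s$ --- are correct and give a self-contained elementary argument. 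Your inductive step (tower $L/K/E$ with a proper normal subgroup, \Cref{cor:exact2} to verify the depth hypotheses, \Cref{lem:12} to compose the Herbrand functions, and the range inclusion $\Nm_{K/E}[K^\times_{>t}]\subseteq E^\times_{>\varphi_{K/E}(t)}$ to absorb the error term) matches the paper's. Your sufficiency argument for part~(1) by successive approximation through the graded trace maps is also fine.

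The necessity direction of part~(1), however, has a real gap. You assert that ``surjectivity of $\Nm_{L/E}$ on $L^\times_{\geq s}$ makes every graded map surjective for $t\geq s$,'' but this is not automatic for a general filtered map: the graded norm can have nontrivial kernel at intermediate jump depths (this is exactly the content of \Cref{thm11}), and then an element $x\in L^\times_{\geq s}$ with $\Nm(x)$ deep in $E^\times$ need not itself be deep in $L^\times$. The implication \emph{is} true for cyclic extensions of prime degree, because there the graded norm is injective at every depth other than the single jump $\ell(L/E)$, and that single-jump structure is precisely what makes the counting argument close; but your reduction ``via $L/L^N$ to a single-break extension'' and the phrase ``genuinely nonadditive $p$-power contributions obstruct surjectivity'' are too vague to carry the weight. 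The paper instead proves the strict inclusion $\Nm_{L/E}[L^\times_{\geq s}]\subsetneq E^\times_{\geq\varphi_{L/E}(s)}$ for $s\leq\ell(L/E)$ directly in the cyclic prime case from \cite[V.3 Prop.~5(iii)]{serre:local}, and then propagates strictness through the tower composition $\Nm_{L/E}=\Nm_{K/E}\circ\Nm_{L/K}$ in tandem with \Cref{cor:exact2}. To repair your argument you would need either to prove the graded-surjectivity transfer by an explicit index count at the top break of the cyclic prime subquotient, or to restructure the necessity proof along the paper's lines.
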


\begin{proof}
    Suppose first that $L/E$ is unramified.  In this case   the statements in the Lemma follow from~\cite[V.2 Propositions 1 and 3]{serre:local} and their proofs.  
    
    Now suppose  that $L/E$  is totally ramified and cyclic of prime order.  Since
    \[ \Nm_{L/E} \colon L^\times_{= \ell(L/E)} \ra E^\times_{=\varphi_{L/E}(\ell(L/E))} \]
    is neither injective nor surjective~\cite[V.3 Proposition 5(iii)]{serre:local},  it follows that if $s \leq \ell(L/E)$, then  $\Nm_{L/E}[L^\times_{\geq s}] \subsetneq E^\times_{\geq \varphi_{L/E}(s)}$.  On the other hand, for $s > \ell(L/E)$, 
    the proof of~\cite[V.3 Proposition 5(iv)]{serre:local}  shows that Claim~\ref{it:norm2} of the Lemma holds.  It follows that  for all $s > \ell(L/E)$ we have $\Nm_{L/E}[L^\times_{\geq s}] = E^\times_{\geq \varphi_{L/E}(s)}$.

   Suppose now that $L/E$ is a totally ramified Galois extension.  We shall proceed by induction on $|\Group(L/E)|$.  If $\Group(L/E)$ is trivial, then $E = L$ and there is nothing to prove.  If $\Group(L/E)$ is not trivial, then  since $\Group(L/E)$ is solvable there is a subextension $K/E$ of $L/E$ such that $\Group(K/E) = \Group(L/E)/\Group(L/K)$ is cyclic of prime order.  By induction we know that for all $r > 0$ 
   \begin{enumerate}
   \item 
    \(  \Nm_{L/K} [L^\times_{\geq r}] \subseteq  K^\times_{\geq \varphi_{L/K}(r)}\) with equality if and only if $r > \ell(L/K)$ and 
    \item \label{it:norm2a} if  $r > \ell(L/K)$ and  $x \in L_{\geq r}$, then $\Nm_{L/K}(1+x) = 1 +\Tr_{L/K}(x)$ modulo $K^\times_{> \varphi_{L/K}(r)}$. 
    \end{enumerate}
   Since $K/E$ is cyclic of prime order, we know that for all $t > 0$
    \begin{enumerate}
   \item 
    \(  \Nm_{K/E} [K^\times_{\geq t}] \subseteq  E^\times_{\geq \varphi_{K/E}(t)}\) with equality if and only if $t > \ell(K/E)$ and 
    \item \label{it:norm2b} if  $t > \ell(K/E)$ and  $y \in K_{\geq t}$, then $\Nm_{K/E}(1+y) = 1 +\Tr_{K/E}(y)$ modulo $E^\times_{> \varphi_{K/E}(t)}$. 
    \end{enumerate}
For all $s > 0$ we have 
\begin{equation*}
    \begin{split}
    \Nm_{L/E}[L^\times_{\geq s}] &= \Nm_{K/E}[\Nm_{L/K}[L^\times_{\geq s}]] \\ 
    &\subseteq \Nm_{K/E}[K^\times_{ \geq \varphi_{L/K}(s)}] \\
    &\subseteq E^\times_{\geq \varphi_{K/E}(\varphi_{L/K}(s))} = E^\times_{\geq \varphi_{L/E}(s)} 
     \end{split}
\end{equation*}
 with equality if and only if both $s> \ell(L/K)$ and $\varphi_{L/K}(s) > \ell(K/E)$.  But, from Corollary~\ref{cor:exact2} we know that  $s> \ell(L/K)$ and $\varphi_{L/K}(s) > \ell(K/E)$ if and only if $s > \ell(L/E)$.  Thus, Claim~\ref{it:norm1} holds for totally ramified Galois extensions. Finally, if $r > \ell(L/E)$, then for all $x \in L_{\geq r}$ we have
 \begin{equation*}
     \begin{split}
         \Nm_{L/E}(1 + x) & = \Nm_{K/E} ( \Nm_{L/K}(1 + x)) \\
         &= \Nm_{K/E} (1 + \Tr_{L/K}(x) + z) \qquad \hphantom{+ w}\text{     for some $z \in K_{>\varphi_{L/K}(r)}$}\\
          &= 1 + \Tr_{K/E}(\Tr_{L/K}(x) + z) + w  \qquad \text{ for some $w \in E_{>\varphi_{K/E}(\varphi_{L/K}(r))}$}\\
          &\equiv 1 + \Tr_{L/E}(x)  \mod{E^\times_{>\varphi_{L/E}(r)}},
     \end{split}
 \end{equation*}
and so Claim~\ref{it:norm2} holds for totally ramified Galois extensions.

 Finally, suppose $L/E$ is any finite Galois extension.  The case when $L/E$ is unramified is handled above, so we may assume $\Group(L/E)$ is nontrivial.    We let $K = L^{\Group(L/E)}$ be the maximal unramified extension of $E$ inside $L$.  Since $L/K$ is totally ramified with Galois group $\Group(L/K) = \Group(L/E)$ and $\varphi_{L/E} = \varphi_{L/K}$,  from our results above we have that for all $s > 0$
 \[ \Nm_{L/E}[L^\times_{ \geq s}] = \Nm_{K/E}[\Nm_{L/K}[L^\times_{\geq s}]] \subseteq \Nm_{K/E}[K^\times_{\geq \varphi_{L/K}(s)}] = E^\times_{\geq \varphi_{L/K}(s)} = E^\times_{\geq \varphi_{L/E}(s)}\]
 with equality if and only if $s > \ell(L/K) = \ell(L/E)$.   For $r > \ell(L/E)$ our results above similarly yield $\Nm_{L/E}(1+x) \equiv 1 + \Tr_{L/E}(x) \mod{E^\times_{>\varphi_{L/E}(r)}}$.
\end{proof}

With a bit more work, we can also describe
the interaction at intermediate depth
between the norm map and congruence subgroups.

\begin{theorem}[{\cite[V.6]{serre:local}}]  \label{thm11}
Let $L/K$ be a finite totally ramified Galois extension
and let $r\geq0$. 
If $K^\times_{=\varphi_{L/K}(r)}$ is nontrivial, 
then there is an exact sequence
\[
\begin{tikzcd}
1 \rar &
\Group(L/K)^{r:r+} \rar{\theta} &
L^\times_{=r} \rar{\Nm_{L/K}} &
K^\times_{=\varphi_{L/K}(r)}.
\end{tikzcd}
\]
\end{theorem}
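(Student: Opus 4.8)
The plan is to verify exactness at the two spots separately and to obtain the nontrivial inclusion by induction on $[L:K]$, with base case $L/K$ cyclic of prime degree. Exactness at the left-hand term is the injectivity of $\theta$, part of \Cref{lem:lemma5} (here $\theta=\theta_r$ is the map $\sigma\mapsto\sigma(\varpi_L)/\varpi_L$ of that lemma, and $\Nm_{L/K}\colon L^\times_{=r}\to K^\times_{=\varphi_{L/K}(r)}$ is the induced map discussed just before \Cref{lem:normrange}). Since the norm is $\Gal(L/K)$-invariant, $\Nm_{L/K}(\sigma(\varpi_L)/\varpi_L)=1$ for $\sigma\in\Group(L/K)_r$, and $\sigma(\varpi_L)/\varpi_L\in L^\times_{\geq r}$ because $\val(\sigma(\varpi_L)/\varpi_L-1)=\depth_{L/K}(\sigma)\geq r$; hence $\operatorname{im}\theta_r\subseteq\ker\Nm_{L/K}$. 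The whole content is therefore the reverse inclusion $\ker(\Nm_{L/K}\colon L^\times_{=r}\to K^\times_{=\varphi_{L/K}(r)})\subseteq\operatorname{im}\theta_r$.

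For the base case, let $p=[L:K]$. The only index at which the source of $\theta$ is nontrivial is $r=\ell(L/K)$, or $r=0$ in the tame case, where it is all of $\Gal(L/K)\cong\Z/p$. When $r>\ell(L/K)$, \Cref{lem:normrange} identifies $\Nm_{L/K}$ on graded pieces with $\Tr_{L/K}$, which is an isomorphism there for $L/K$ totally ramified, so $\ker\Nm_{L/K}$ is trivial, as required. When $r\leq\ell(L/K)$ I would invoke the explicit norm computation of \cite[V.3]{serre:local}: after trivializing $L^\times_{=r}$ and $K^\times_{=\varphi_{L/K}(r)}$ by uniformizers (each is one-dimensional over $k_L=k_K$), $\Nm_{L/K}$ becomes an additive polynomial map of $k$ of degree at most $p$ — of the shape $y\mapsto cy^p$ with $c\neq0$ when $r<\ell(L/K)$, and $y\mapsto Ay+By^p$ with $B\neq0$ when $r=\ell(L/K)$ — so its kernel has at most $p$ elements; combined with the injectivity of $\theta_r$ and the inclusion already proved, $\operatorname{im}\theta_r=\ker\Nm_{L/K}$. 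The tame case $r=0$ is analogous, $\Nm_{L/K}$ being $y\mapsto y^p$ on $k^\times$ with kernel $\mu_p(k)$, which has exactly $p$ elements since a totally ramified cyclic degree-$p$ extension forces $\mu_p\subseteq K$.

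For the inductive step, assume $[L:K]$ is not prime. Since $\Gal(L/K)=\Group(L/K)$ is solvable (the remark after \Cref{lem:lemma5}) it has a normal subgroup $H$ of prime index $p$; set $M=L^H$, so $M/K$ is cyclic totally ramified of degree $p$ and $L/M$ is totally ramified Galois with $[L:M]<[L:K]$. Put $s=\varphi_{L/M}(r)$; by \Cref{lem:12}, $\Nm_{L/K}$ on $L^\times_{=r}$ factors as $L^\times_{=r}\xrightarrow{\Nm_{L/M}}M^\times_{=s}\xrightarrow{\Nm_{M/K}}K^\times_{=\varphi_{L/K}(r)}$, and a short computation with the explicit form of $\varphi_{M/K}$ for a cyclic degree-$p$ extension shows the hypothesis ``$K^\times_{=\varphi_{L/K}(r)}$ nontrivial'' forces ``$M^\times_{=s}$ nontrivial'', so both the base case (for $M/K$ at $s$) and the inductive hypothesis (for $L/M$ at $r$) apply. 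Given $u\in L^\times_{=r}$ with $\Nm_{L/K}(u)=1$, one has $\Nm_{L/M}(u)\in\ker\Nm_{M/K}=\operatorname{im}\theta^{M/K}_s$, say $\Nm_{L/M}(u)=\overline{\sigma(\varpi_M)/\varpi_M}$ in $M^\times_{=s}$ for some $\sigma\in\Group(M/K)_s$; lift $\sigma$ to $\tilde\sigma\in\Group(L/K)_r$ by \Cref{lem:11}. Because $H\trianglelefteq\Gal(L/K)$, one computes $\Nm_{L/M}(\tilde\sigma(\varpi_L)/\varpi_L)=\sigma(\varpi'_M)/\varpi'_M$ with $\varpi'_M:=\Nm_{L/M}(\varpi_L)$ a uniformizer of $M$, and by independence of $\theta^{M/K}_s$ from the uniformizer (\Cref{lem:lemma5}) this equals $\Nm_{L/M}(u)$ in $M^\times_{=s}$. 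Hence $u\cdot(\tilde\sigma(\varpi_L)/\varpi_L)^{-1}\in\ker\Nm_{L/M}=\operatorname{im}\theta^{L/M}_r$, say it equals $\overline{\tau(\varpi_L)/\varpi_L}$ with $\tau\in\Group(L/M)_r\subseteq\Group(L/K)_r$ (\Cref{lem:3}); and then $u=\theta_r(\tilde\sigma\tau)$ since $\theta_r$ is a homomorphism (\Cref{lem:lemma5}), so $u\in\operatorname{im}\theta_r$. This completes the induction.

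The step I expect to be the main obstacle is the base case at $r=\ell(L/K)$. Because the residue field may be infinite, the mere failure of $\Nm_{L/K}$ to be surjective on the relevant graded piece — which is all that \Cref{lem:normrange} records — does not bound the size of its kernel; one genuinely needs the precise polynomial shape of the norm map there, i.e.\ the full content of \cite[V.3, Proposition~5]{serre:local}. Everything else is bookkeeping with the Herbrand function and with the normality of $H$.
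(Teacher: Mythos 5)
Your proposal is correct and follows precisely the route the paper indicates: the paper omits a proof but states it proceeds by ``a similar reduction to cyclic Galois extensions that we saw in the proof of \Cref{lem:normrange}'', and your induction on $[L:K]$ through a normal subgroup $H$ of prime index, with the prime-cyclic base case handled via the explicit polynomial form of the norm from \cite[V.3, Proposition~5]{serre:local}, is exactly that reduction. Two small notes: the source $\Group(L/K)^{r:r+}$ in the statement should be read as the lower-numbering quotient $\Group(L/K)_{r:r+}$, as you tacitly did and as the target $L^\times_{=r}$ of $\theta_r$ from \Cref{lem:lemma5} forces (this matches Serre's formulation); and your worry about infinite residue fields is moot here --- the paper assumes $F$ is a nonarchimedean local field, so $k_F$ is finite, and in any case your counting argument at $r=\ell(L/K)$ is already field-size agnostic, since $\operatorname{im}\theta_\ell$ has exactly $p$ elements while $\ker\Nm_{L/K}$, being the zero set of an additive polynomial of degree $p$, has at most $p$.
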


We omit a proof, but it uses a similar reduction to cyclic Galois extensions
that we saw in the proof of \Cref{lem:normrange}.
Since  $L/K$ is totally ramified, we have that if both $L^\times_{=t}$ and $K^\times_{=s}$
are nonzero for some $t, s \in \R_{>0}$, then they have the same cardinality.  Thus,
 \Cref{thm11} implies that  the homomorphism
$\Nm_{L/K} \colon L^\times_{=r} \to K^\times_{=\varphi_{L/K}(r)}$ is, outside a finite number of exceptional depths,  usually
an isomorphism when the target is nonzero.
The set of exceptional depths is a subset
of the set of jumps in the upper numbering filtration on $\Group(L/K)$, and it is
 in general a proper subset:
$\Group(L/K)^{r:r+}\neq1$ does not imply that $K^\times_{=\varphi_{L/K}(r)} \neq 1$.

\begin{example}[Wildly ramified norm-one tori]
Let $E = F$ and suppose $L/E$ is a totally ramified separable quadratic extension.
Let $T$ be the $F$-torus representing
the norm-one elements $L^1\defeq\ker(\Nm_{L/F})$.
Using Corollary~\ref{cor:slargeandc} to rewrite  Example~\ref{ex:quadratic} we have
\[
\varphi_{L/F}(r) = \begin{cases}
2r & \tn{if $0\leq r\leq c_{L/F}$} \\
r + c_{L/F} & \tn{if $c_{L/F} < r$.}
\end{cases}
\]
From local Artin reciprocity~\cite[Theorem~1.1]{milne:lcft} there is an exact sequence
\[
1 \longrightarrow L^1 \longrightarrow L^\times
\xlongrightarrow{\Nm} F^\times \longrightarrow \Group(L/F)
\longrightarrow 1.
\]
Heuristically, \Cref{thm11} implies that
the mass of $L^1$ concentrated at shallow depths is missing.
More precisely: $L^1 = L^1_{\geq c_{L/F}}$,
or in other words, $L^1_{=r} = 1$ if $r < c_{L/F}$.
Moreover, the parahoric subgroup of $L^1$ is $L^1_{>c_{L/F}}$
and the relative component group is $L^1_{=c_{L/F}}\cong\bbZ/2\bbZ$,
as predicted by the Kottwitz homomorphism \cite[Cor.~11.1.6]{KP23}.
\Cref{fig:mass} illustrates the effect
of the norm map on the depth, and the resulting
congruence filtration of $L^1$.
Specifically, suppose $S$ is one of $T(F)$, $\Gm(L)$ or $\Gm(F)$.
At depth~$r$,
we draw \begin{tikzpicture}\node[empty] (A) at (0,0) {};\end{tikzpicture}
if $S_{=r}=1$, 
we draw \begin{tikzpicture}\node[half] (A) at (0,0) {};\end{tikzpicture}
if $S_{=r}\simeq\bbZ/2\bbZ$, and 
we draw \begin{tikzpicture}\node[full] (A) at (0,0) {};\end{tikzpicture}
if either $r>0$ and $S_{=r} \simeq \bbF_q$ or $r = 0$ and $S_{=0} \simeq \bbF_q^\times$.
We use a similar notational scheme for $\Group(L/F)^{r:r+}$.

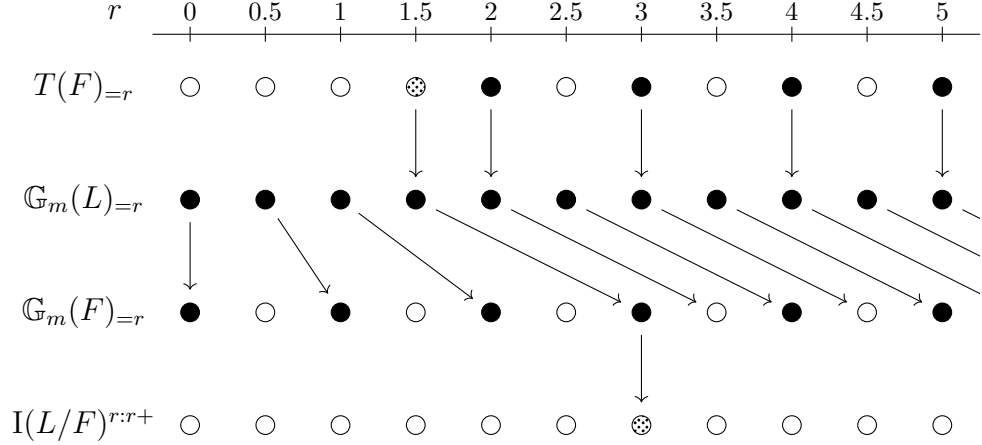
\begin{figure}[h]
\centering
\begin{tikzpicture}
\clip (-2.5,-2) rectangle (10.5,5);

\foreach \a in {-1,0,...,14}{
	\coordinate (A\a) at (\a,3);
	\coordinate (B\a) at (\a,1.5);
	\coordinate (C\a) at (\a,0);
	\coordinate (D\a) at (\a,-1.5);
}

\node (la-1) at (A-1) {$T(F)_{=r}\qquad$};
\node (lb-1) at (B-1) {$\Gm(L)_{=r}\qquad$};
\node (lc-1) at (C-1) {$\Gm(F)_{=r}\qquad$};
\node (ld-1) at (D-1) {$\Group(L/F)^{r:r+} \qquad$};

\node (R) at (-1,4) {$r$};
\draw (-0.5,3.7) edge (10.5,3.7);
\foreach \a in {0,1,...,10} \draw (\a,3.6) -- (\a,3.8);

\foreach[evaluate={\b=int(\a/2)}] \a in {0,2,...,10} \node (dp\a) at (\a,4) {\footnotesize$\b$};
\foreach[evaluate={\b=\a/2}] \a in {1,3,...,9} \node (dp\a) at (\a,4) {\footnotesize$\b$};

\foreach \a in {0,1,2,5,7,9} \node[empty] (a\a) at (A\a) {};
\node[half] (a3) at (A3) {};
\foreach \a in {4,6,8,10} \node[full] (a\a) at (A\a) {};
\foreach \a in {0,1,...,12} \node[full] (b\a) at (B\a) {};
\foreach \a in {0,2,...,12} \node[full] (c\a) at (C\a) {};
\foreach \a in {1,3,...,11,13} \node[empty] (c\a) at (C\a) {};
\foreach \a in {0,1,2,3,4,5,7,8,9,10} \node[empty] (d\a) at (D\a) {};
\node[half] (d6) at (D6) {};

\foreach \a in {3,4,6,8,10} \draw[->] (a\a) -- (b\a);

\draw[->] (b0) -- (c0);
\draw[->] (b1) -- (c2);
\draw[->] (b2) -- (c4);
\draw[->] (b3) -- (c6);
\draw[->] (b4) -- (c7);
\draw[->] (b5) -- (c8);
\draw[->] (b6) -- (c9);
\draw[->] (b7) -- (c10);
\draw[->] (b8) -- (c11);
\draw[->] (b9) -- (c12);
\draw[->] (b10) -- (c13);

\draw[->] (c6) -- (d6);
\end{tikzpicture}
\caption{The effect of the norm map on the congruence filtration,
for $L/F$ wild quadratic with $c_{L/F}=3/2$.}
\label{fig:mass}
\end{figure}
\end{example}

\begin{remark}[Norm map for non-Galois extensions]
Using local class field theory,
we can show that if $L/K$ is a finite separable,
not necessarily Galois, extension
then $\Nm_{L/K}(L^\times) = \Nm_{L_0/K}(L_0^\times)$
where $L_0/K$ is the maximal Galois extension of~$K$ contained in~$L$,
or in other words, the intersection of all Galois conjugates of~$L$.  (To see this, note that for $L_a$, the maximal abelian subextension of $L$, we have $\Nm_{L/K}(L^\times) = \Nm_{L_a/K}(L_a^\times)  \leq \Nm_{L_0/K}(L_0^\times) \leq \Nm_{L/K}(L^\times)$~\cite[III, Theorem~3.5]{milne:lcft}.)  
In particular, if $L/K$ is ``totally non-Galois'' in the sense that $L_0=K$
then $\Nm_{L/K}$ is surjective.
There are many such extensions, for example,
$K(\sqrt[e]{\varpi})/K$ if $e$ is prime to $|k_K| - 1$.
It would be interesting to generalize \Cref{thm11} to such extensions.
\end{remark}
 
\subsection{Upper numbering subgroups and extensions}

In this section $E$ is an extension of $F$ and $L/E$ is a finite Galois extension.

\begin{lemma} \label{lem:18} Suppose $r \in \R_{\geq 0}$.   We have
\[\Group_E^r \cap \Group_L^0 = \Group_L^{\psi_{L/E}(r)} \text{ and } \Group_E^{r+} \cap \Group_L^0 = \Group_L^{\psi_{L/E}(r)+}.\]
Moreover, if $\Group_E^r \leq \Group^0_L$, then  for all $s \geq r$ we have
\begin{enumerate} 
    \item $\Group_L^{\psi_{L/E}(s)} = \Group_E^s$  and
    \item $\psi_{L/E}(s) - s = \psi_{L/E}(r) - r$.
\end{enumerate} 
\end{lemma}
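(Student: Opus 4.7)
The plan is to first establish the stated identities at the finite level, for Galois extensions $M/E$ containing $L$, and then pass to inverse limits; the ``moreover'' clause will then follow formally from the first equality together with \Cref{cor:slargeandc}.

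At the finite level I would show that for any $M\in I(E)$ containing $L$,
\[
\Group(M/E)^r \cap \Group(M/L) = \Group(M/L)^{\psi_{L/E}(r)},
\]
and the same identity with $r$ replaced by $r+$.  The idea is to unwind the upper numbering: the left-hand side equals $\Group(M/E)_{\psi_{M/E}(r)}\cap\Group(M/L)$, which by \Cref{lem:3}(4) collapses to $\Group(M/L)_{\psi_{M/E}(r)}$; the tower formula $\psi_{M/E} = \psi_{M/L}\circ\psi_{L/E}$ from \Cref{lem:12} then identifies this with $\Group(M/L)^{\psi_{L/E}(r)}$.  The $r+$ variant follows by the same computation once one notes, using that $\psi_{M/E}$ is strictly increasing and continuous, that $\Group(M/E)^{r+} = \Group(M/E)_{\psi_{M/E}(r)+}$.

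To pass to the limit, note that extensions $M\in I(E)$ containing $L$ are trivially cofinal in $I(E)$ and, via Galois closure over~$E$ inside~$\bar F$, give a cofinal family in $I(L)$ as well.  An element $\sigma\in\Gal(\bar F/E)$ lies in $\Group_E^r\cap\Group_L^0$ iff $\sigma\in\Group_L$ and the image $\sigma_M$ of $\sigma$ in each such $\Group(M/E)$ lies in $\Group(M/E)^r$.  Since $\sigma\in\Group_L$ already forces $\sigma_M\in\Group(M/L)$, the finite-level identity converts the condition into $\sigma_M\in\Group(M/L)^{\psi_{L/E}(r)}$ for all such~$M$, which is exactly the inverse-limit description of $\Group_L^{\psi_{L/E}(r)}$.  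The $r+$ case is identical.

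For the ``moreover'' part, assume $\Group_E^r\leq\Group_L^0$.  For $s\geq r$, the inclusion $\Group_E^s\subseteq\Group_E^r\subseteq\Group_L^0$ combined with the first equality at level~$s$ yields $\Group_E^s = \Group_L^{\psi_{L/E}(s)}$, giving~(1).  For~(2), I would apply the surjection $\Group_E\twoheadrightarrow\Group(L/E)$, whose kernel is $\Group_L$ and which sends $\Group_E^r$ onto $\Group(L/E)^r$; the hypothesis then forces $\Group(L/E)^r = \Group(L/E)_{\psi_{L/E}(r)} = 1$, so $\psi_{L/E}(r) > \ell(L/E)$.  For any $s\geq r$ then $\psi_{L/E}(s)\geq\psi_{L/E}(r) > \ell(L/E)$, and \Cref{cor:slargeandc} gives $s = \varphi_{L/E}(\psi_{L/E}(s)) = \psi_{L/E}(s) + c_{L/E}$, so $\psi_{L/E}(s) - s = -c_{L/E}$ independent of~$s$.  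The main bookkeeping nuisance is the cofinality check in the passage to the inverse limit; all of the substance is already in the finite-level tower formula \Cref{lem:12}.
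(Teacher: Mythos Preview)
Your argument is correct, and the first half---the finite-level computation $\Group(M/E)^r\cap\Group(M/L)=\Group(M/L)^{\psi_{L/E}(r)}$ via \Cref{lem:3}(4) and \Cref{lem:12}---is exactly the paper's argument, though you are more explicit about the $r+$ variant and the cofinality needed to pass to the inverse limit (the paper leaves these implicit).

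The difference is in part~(2) of the ``moreover'' clause. The paper stays at the level of a finite extension $M/E$: it first observes that $\Group(M/E)_t=\Group(M/L)_t$ for $t\geq\psi_{M/E}(r)$, then writes $\psi_{L/E}(s)-s=\varphi_{M/L}(\psi_{M/E}(s))-\varphi_{M/E}(\psi_{M/E}(s))$ and expands both $\varphi$'s via their integral definition to see that the difference is constant in~$s$. Your route is cleaner: from $\Group_E^r\leq\Group_L^0$ you deduce $\Group(L/E)_{\psi_{L/E}(r)}=1$, hence $\psi_{L/E}(r)\geq\ell(L/E)$ (you wrote strict inequality, which holds when $\Group(L/E)\neq1$; in the unramified case everything is trivial), and then \Cref{cor:slargeandc} gives $s-\psi_{L/E}(s)=c_{L/E}$ for all $s\geq r$ directly. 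This avoids any auxiliary $M$ and reuses a result the paper has already packaged; the paper's version, by contrast, is self-contained and does not invoke $c_{L/E}$ or \Cref{cor:slargeandc}.
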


\begin{proof}
Suppose $M/E$ is a finite Galois extension such that $L \leq M$.

For the first statement, it will be enough to show that $\Group(M/L)^{\psi_{L/E}(s)} = \Group(M/E)^s \cap \Group(M/L)$ for all $s \geq r$.   Suppose $s \geq r$. We have
\begin{equation*}
    \begin{split}
       \Group(M/E)^s \cap \Group(M/L) 
        &= \Group(M/E)_{\psi_{M/E}(s)} \cap \Group(M/L) = \Group(M/L)_{\psi_{M/E}(s)} \\
        &=  \Group(M/L)^{\varphi_{M/L} (\psi_{M/E}(s))} = \Group(M/L)^{\psi_{L/E}(s)}.
    \end{split}
\end{equation*}

We now assume $\Group_E^r \leq \Group_L^0$ and turn to the remaining statements.   From the computations above we have
\[  \Group(M/E)^s  =  \Group(M/E)^s \cap \Group(M/L)  = \Group(M/L)^{\psi_{L/E}(s)}.\]
For the second statement, note that 
for all $t \geq \psi_{M/E}(r)$ we have
\[ \Group(M/E)_t =  \Group(M/E)^{\varphi_{M/E}(t)} =  \Group(M/L)^{\psi_{L/E}(\varphi_{M/E}(t))} =  \Group(M/L)^{\varphi_{M/L}(t)} 
=  \Group(M/L)_t.\]
Thus
\begin{equation*}
    \begin{split}
        \psi_{L/E}(s) - s &= \varphi_{M/L}(\psi_{M/E} (s)) -  \varphi_{M/E}(\psi_{M/E} (s)) \\
        &= \varphi_{M/L}(\psi_{M/E} (r)) + \int_{\psi_{M/E}(r)}^ {\psi_{M/E} (s)}  \big| \Group(M/L)_t \big| \, dt \\
        &  \hspace{2em}  - \Bigr( \varphi_{M/E}(\psi_{M/E} (r))  + \int_{\psi_{M/E} (r)}^ {\psi_{M/E} (s)}  \big| \Group(M/E)_t \big| \, dt \Bigl) \\  
        &= \varphi_{M/L}(\psi_{M/E} (r)) - \varphi_{M/E}(\psi_{M/E} (r)) \\
        &= \psi_{L/E}(r) - r. \qedhere
    \end{split}
\end{equation*}
\end{proof}

\begin{remark}  From Exercise~\ref{exc:classsical_sublemma} the  first claim of Lemma~\ref{lem:18} is consistent with the classical version of the same result with $F = E$ (see, for example, \cite[Proposition 1.4]{BH19}, \cite[Lemma~1]{MP19}, or \cite[Lemma~2.2]{AP22}; in~\cite{AP22} they call this result the \emph{comparison lemma}). 
\end{remark}

\begin{definition}  Recall that $E$ is an extension of $F$ and $L/E$ is a finite Galois extension.  We have defined
    \( \ell(L/E)  = \inf \{r \in \R_{\geq 0} \colon \Group(L/E)_r = 1  \}\),
 and we now define
    \[  u(L/E)  := \inf \{t \in \R_{\geq 0} \colon \Group(L/E)^t = 1  \}. \]
\end{definition}

\begin{example} \label{ex:exampofuandl}
In Example~\ref{exc:cyclo} we have
\[u(\Q_p(\zeta_{p^n})/\Q_p) = n-1 \, \text{   and    } \, \ell(\Q_p(\zeta_{p^n})/\Q_p) = \frac{p^{n-1} - 1}{(p-1)p^{n-1}}.\]
\end{example}

\begin{remark}  \label{rem:sometruthsaboutuandl}  Here are some observations about $\ell(L/E)$ and $u(L/E)$.
\begin{enumerate}
    
\item \label{it:one} We have \[ u(L/E) = \varphi_{L/E}(\ell(L/E)) \text{ and } \psi_{L/E}( u(L/E)) = \ell(L/E),\]
 and thus from Corollary~\ref{cor:slargeandc} we conclude
 that $c_{L/E} = u(L/E) - \ell(L/E)$.

\item
From~\cite[IV.2 Exc. 3]{serre:local} we have 
 \[ \ell(L/E) \leq \frac{ e(p)}{(p-1)\cdot e(L/F)}\]
where $e(p)$ denotes the absolute ramification index of $p$. 
\end{enumerate}
\end{remark}

\begin{lemma}\label{lem:TFAE}
Suppose $s \in \R_{\geq 0}$.  The following statements are equivalent.
\begin{enumerate}

     \item   \label{item:five} \( \psi_{L/E}(s) \geq \ell(L/E)\)  or \( s \geq u(L/E)\).
   
 \item \label{item:six}   $ c_{L/E} = s - \psi_{L/E}(s) $.
    
    \item  \label{item:two}  \(\Group(L/E)_{\psi_{L/E}(s)+} = 1 \).
    
\item  \label{item:three} \( \Norm_{L/E} [L^\times_{>\psi_{L/E}(s)}] = E^\times_{>s} \).

\item    \label{item:four} $\Group_L^{\psi_{L/E}(s)+} = \Group_E^{s+}$. 

   \item \label{item:one} \(\Group_E^{s+} \leq \Group^0_L    \).

\end{enumerate} 
\end{lemma}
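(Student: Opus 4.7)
My plan is to arrange the six-way equivalence as three easier blocks all pivoting on~(1): the ``lower-numbering'' equivalences (1) $\Leftrightarrow$ (2) $\Leftrightarrow$ (3), the ``profinite upper-numbering'' equivalences (1) $\Leftrightarrow$ (5) $\Leftrightarrow$ (6), and the norm equivalence (1) $\Leftrightarrow$ (4). The first two blocks are formal; the third is where the real work lies.

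For the first block, the key identity is $\varphi_{L/E}(t) - t = \sum_{\sigma \neq 1} \min\bigl(\depth_{L/E}(\sigma), t\bigr)$, which is Definition~\ref{def:newphi} with the $\sigma = 1$ term stripped off (since $\depth_{L/E}(1) = \infty$). Lemma~\ref{depth-sum} identifies $c_{L/E}$ with $\sum_{\sigma \neq 1} \depth_{L/E}(\sigma)$, so upon substituting $t = \psi_{L/E}(s)$ (so that $\varphi_{L/E}(t) = s$), equation (2) holds exactly when $t \geq \depth_{L/E}(\sigma)$ for every nontrivial~$\sigma$, equivalently when $\Group(L/E)_{t+} = 1$, which is~(3), or when $t \geq \ell(L/E)$. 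Combined with $\psi_{L/E}(u(L/E)) = \ell(L/E)$ from Remark~\ref{rem:sometruthsaboutuandl} and the monotonicity of~$\psi_{L/E}$, this matches~(1). So (1) $\Leftrightarrow$ (2) $\Leftrightarrow$ (3).

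For the second block, the surjection $\Group_E \twoheadrightarrow \Group(L/E)$ carries $\Group_E^{s+}$ onto $\Group(L/E)^{s+}$ by the definition of the profinite upper numbering, and its kernel is $\Group_L^0$. Hence $\Group_E^{s+} \leq \Group_L^0$ iff $\Group(L/E)^{s+} = 1$ iff $s \geq u(L/E)$, giving (1) $\Leftrightarrow$ (6). Lemma~\ref{lem:18} then gives $\Group_E^{s+} \cap \Group_L^0 = \Group_L^{\psi_{L/E}(s)+}$, so (5) is the assertion $\Group_E^{s+} = \Group_E^{s+} \cap \Group_L^0$, i.e., (6).

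The forward direction (1) $\Rightarrow$ (4) follows immediately from Lemma~\ref{lem:normrange}(1): every $r > \psi_{L/E}(s) \geq \ell(L/E)$ satisfies $\Nm[L^\times_{\geq r}] = E^\times_{\geq \varphi_{L/E}(r)}$, and the union over such~$r$ exhausts $E^\times_{>s}$ by strict monotonicity of $\varphi_{L/E}$. The converse (4) $\Rightarrow$ (1) is the main obstacle. I would argue the contrapositive: given $\psi_{L/E}(s) < \ell(L/E)$, first reduce to $L/L^u$ totally ramified (since $\Nm_{L^u/E}$ is surjective on every congruence stratum of the unramified extension $L^u/E$, and the relevant invariants $\psi$, $\varphi$, $\ell$, $u$ for $L/L^u$ agree with those for $L/E$). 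Then apply Theorem~\ref{thm11} at $r = \ell(L/L^u)$ to produce the nontrivial kernel $\Group(L/E)^{u(L/E):u(L/E)+}$, which forces $\Nm \colon L^\times_{=r} \to (L^u)^\times_{=u(L/E)}$ to miss a coset. The delicate point is verifying that this missed coset cannot be filled in by ``exceptional'' $x \in L^\times$ with $\val(x-1) < \ell(L/E)$ but $\val(\Nm(x) - 1) \geq u(L/E)$; for this I would apply Theorem~\ref{thm11} at each intermediate lower-numbering jump in the interval $\bigl(\psi_{L/E}(s), \ell(L/E)\bigr]$ and track the resulting images in the successive quotient $E^\times_{\geq u(L/E)}/E^\times_{>u(L/E)}$, concluding that the image of $\Nm[L^\times_{>\psi_{L/E}(s)}]$ in this quotient is a proper subgroup, hence (4) fails.
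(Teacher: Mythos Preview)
Your first two blocks are correct and essentially match the paper. The only organizational difference is that you prove (1) $\Leftrightarrow$ (6) directly via the surjection $\Group_E \twoheadrightarrow \Group(L/E)$ with kernel $\Group_L^0$, whereas the paper reaches (6) through (3) $\Leftrightarrow$ (5) using the exact sequence of Exercise~\ref{exc:alternateformsofexact}; both routes are fine.

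Where you go astray is in treating (4) $\Rightarrow$ (1) as ``the main obstacle'' and proposing a delicate argument via Theorem~\ref{thm11}. You have overlooked that Lemma~\ref{lem:normrange}(\ref{it:norm1}) is already an \emph{if and only if}, so it dispatches both directions of (3) $\Leftrightarrow$ (4) at once, exactly as the paper does. For the contrapositive: if $\psi_{L/E}(s) < \ell(L/E)$ (so $\Group(L/E)$ is nontrivial), let $r_1$ be the least element of $e(L/F)^{-1}\bbZ$ exceeding $\psi_{L/E}(s)$; since $\ell(L/E)\in e(L/F)^{-1}\bbZ$ by Lemma~\ref{lem:3} we have $r_1\leq\ell(L/E)$, hence $L^\times_{>\psi_{L/E}(s)} = L^\times_{\geq r_1}$ and Lemma~\ref{lem:normrange}(\ref{it:norm1}) gives
\[
\Nm_{L/E}[L^\times_{>\psi_{L/E}(s)}] = \Nm_{L/E}[L^\times_{\geq r_1}] \subsetneq E^\times_{\geq\varphi_{L/E}(r_1)} \subseteq E^\times_{>s},
\]
so (4) fails. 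There are no ``exceptional $x$'' to worry about, because $L^\times_{>\psi_{L/E}(s)}$ is literally equal to $L^\times_{\geq r_1}$.

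Your Theorem~\ref{thm11} route is not only redundant but has a genuine gap as written: that theorem requires $K^\times_{=\varphi_{L/K}(r)}$ to be nontrivial, and at $r = \ell(L/E)$ this amounts to $u(L/E)\in e(E/F)^{-1}\bbZ$, which can fail --- for instance in the paper's first quaternionic example, where $u(L/E)=3/2\notin\bbZ = e(E/F)^{-1}\bbZ$. So the exact sequence you invoke is unavailable precisely at the depth where you want it.
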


\begin{proof} 

(\ref{item:five}) $\Leftrightarrow$ (\ref{item:six}):
Since $s \geq u(L/E)$ if and only if $\psi_{L/E}(s) \geq \ell(L/E)$, it is enough to show that $\psi_{L/E}(s) \geq \ell(L/E)$ if and only if $c_{L/E} = s - \psi_{L/E}(s)$.

Thanks to Corollary~\ref{cor:slargeandc} we conclude  that if $\psi_{L/E}(s) \geq \ell(L/E)$, then   $c_{L/E} = \varphi_{L/E}(\psi_{L/E}(s))  - \psi_{L/E}(s) = s - \psi_{L/E}(s)$.

Suppose now that $c_{L/E} = s - \psi_{L/E}(s)$.  Recall from Remark~\ref{rem:sometruthsaboutuandl}(\ref{it:one}) that $c_{L/E} = u(L/E) - \ell(L/E)$.
Note that  $t \mapsto (t - \psi_{L/E}(t))$ is a non-decreasing function on $\R_{\geq 0}$ that is strictly increasing on $[0, u(L/E))$ and constant  on $[u(L/E), \infty)$.  Thus,  if $s - \psi_{L/E}(s) = c_{L/E} = u(L/E) - \ell(L/E)$, then $s \geq u(L/E)$.

(\ref{item:five}) $\Leftrightarrow$ (\ref{item:two}):
Since  \( \psi_{L/E}(s) \geq \ell(L/E)\)  if and only if \( s \geq u(L/E)\), it is enough to show (\ref{item:two}) holds if and only if  \( \psi_{L/E}(s) \geq \ell(L/E)\).  This follows  from the definition of $\ell(L/E)$.

(\ref{item:two}) $\Leftrightarrow$ (\ref{item:three}): 
Since $\Group(L/E)_{\psi_{L/E}(s)+} =1$ if and only if $\psi_{L/E}(s) \geq \ell(L/E)$, this follows from  Lemma~\ref{lem:normrange}~(\ref{it:norm1}).

(\ref{item:two}) $\Leftrightarrow$ (\ref{item:four}):
Suppose $t \geq 0$ and $M/E$ is a finite Galois extension with $L \leq M$.   We need to show \(\Group(M/L)^{\psi_{L/E}(t)} = \Group(M/E)^t \) if and only if $\Group(L/E)_{\psi_{L/E}(t)}$ is trivial.
From 
Exercise~\ref{exc:alternateformsofexact} we have that 
\[ 1 \ra \Group(M/L)^{\psi_{L/E}(t)} \ra \Group(M/E)^t  \ra \Group(L/E)_{\psi_{L/E}(t)} \ra 1\]
is exact.   From this it follows that $\Group(L/E)_{\psi_{L/E}(t)}$ is trivial if and only if 
\[\Group(M/L)^{\psi_{L/E}(t)} = \Group(M/E)^t.\]

(\ref{item:four}) $\Leftrightarrow$ (\ref{item:one}):  
From Lemma~\ref{lem:18} we know that $\Group_L^{\psi_{L/E}(s)+}  = \Group_E^{s+} \cap \Group_L^0$.  Since $\Group_L^0 \leq \Group_E^0$, this means $\Group_L^{\psi_{L/E}(s)+} \leq \Group_E^{s+}$.    Thus, it will be enough to show 
$\Group_E^{s+} \leq \Group_L^{\psi_{L/E}(s)+}$ if and only if $\Group_E^{s+} \leq \Group_L^0$.

If   
 $\Group_E^{s+} \leq \Group_L^{\psi_{L/E}(s)+}$, then  $\Group_E^{s+} \leq \Group_L^{0}$.  On the other hand, if $\Group_E^{s+} \leq \Group_L^{0}$, then $\Group_E^{s+} = \Group_E^{s+} \cap \Group_L^{0} = \Group_L^{\psi_{L/E}(s)+}$ from Lemma~\ref{lem:18}, so $\Group_E^{s+} \leq \Group_L^{\psi_{L/E}(s)+}$.
\end{proof}

\begin{remark}
If $L/E$ is tamely ramified (so $\ell(L/E) = u(L/E) = 0$), then  $\varphi_{L/E}(t) = \psi_{L/E}(t) =  t$ for all $t \geq 0$, 
so $\Group_E^{v} = \Group_L^v$ for all $v > 0$.  If $L/E$ is not tamely ramified, then we have  
$\Group_E^{\ell(L/E)+} = \Group_L^{\psi_{L/E}(\ell(L/E))+}$, but $\Group_E^{\ell(L/E)}$ is not a subset of $\Group_L^0$.  This was observed in~\cite[1.9 Corollary~2]{BH19}.
\end{remark}

\section{Depth preservation, or lack thereof}

\subsection{Cohomology}

Suppose $W$ is a topological group with a neighborhood basis of the identity consisting of open normal subgroups.  A discrete topological group $M$ on which $W$ acts will be called a \emph{smooth $W$-group} provided that $\Stab_W(m)$ is open for all $m \in M$.  Note that $M$ is not assumed to be abelian.

Suppose $M$ is a smooth $W$-group and 
$I \leq W$ is normal in $W$.  We endow $W/I$ with the quotient topology.   Let $M^{I}$ denote the  $I$-fixed points in $M$; it is a smooth $W/I$-group.
A function $c \colon W/I \ra M^{I}$ is called a \emph{(continuous) $1$-cocycle} or  \emph{(continuous) crossed homomorphism} provided that
\begin{itemize}
   \item  $c(\tau \mu) = c(\tau) \cdot \tau c(\mu)$ for all $\tau, \mu \in W/I$, and
\item there exists an open $J \leq W/I$ such that $c(\tau \sigma) = c(\tau)$ for all $\tau \in W/I$,  $\sigma \in J$.
\end{itemize}
Let $Z^1(W/I,M^{I})$ denote the set of continuous $1$-cocycles from $W/I$ to $M^{I}$.
Two cocycles $c_1, c_2 \in Z^1(W/I,M^{I}) $ are said to be \emph{cohomologous} provided that there exists  $m \in M^{I}$ such that $c_1({\tau}) = m^{-1} c_2({\tau}) \tau(m)$ for all $\tau \in W/I$.   This is an equivalence relation, and we let $H^1(W/I,M^{I})$ denote the set of equivalence classes in $Z^1(W/I,M^{I})$.

\begin{lemma} 
Suppose $I_1 \leq I_2$ are normal subgroups of $W$.  The natural map
\begin{equation} \label{equ:injection} \tag{$\heartsuit$} H^1(W/I_2,M^{I_2}) \rightarrow H^1(W/I_1,M^{I_1})
\end{equation}
is an injection.
\end{lemma}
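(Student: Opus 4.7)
The plan is to verify injectivity by chasing the definitions and exploiting that cocycles send the identity to the identity. First I would make the map concrete: composing with the surjection $W/I_1\twoheadrightarrow W/I_2$ and using the inclusion $M^{I_2}\subseteq M^{I_1}$ (valid because $I_1\leq I_2$) converts a cocycle $c\in Z^1(W/I_2,M^{I_2})$ into a cocycle $\tilde c\in Z^1(W/I_1,M^{I_1})$ that happens to factor through $W/I_2$. Continuity is preserved since pulling back along a continuous quotient preserves the local-constancy condition, and cohomologous cocycles pull back to cohomologous cocycles, so this descends to a well-defined map on $H^1$.

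Next, suppose $c_1,c_2\in Z^1(W/I_2,M^{I_2})$ become cohomologous after pullback, so that there exists $m\in M^{I_1}$ with
\[
\tilde c_1(\bar w) \;=\; m^{-1}\,\tilde c_2(\bar w)\,\bar w(m)
\qquad\text{for all } \bar w\in W/I_1.
\]
The key observation I would make is that such an $m$ is automatically $I_2$-fixed, hence lies in $M^{I_2}$. To see this, I would plug in $\bar\tau\in I_2/I_1\subseteq W/I_1$: such a $\bar\tau$ projects to the identity in $W/I_2$, and since any cocycle satisfies $c(1)=1$, we get $\tilde c_i(\bar\tau)=1$. The cohomology relation then collapses to $1=m^{-1}\,\bar\tau(m)$, i.e.\ $\bar\tau(m)=m$.

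Once $m\in M^{I_2}$ is established, the $W$-action on $m$ factors through $W/I_2$, and for any $\tilde w\in W/I_2$ with lift $\bar w\in W/I_1$ the same identity reads $c_1(\tilde w)=m^{-1}\,c_2(\tilde w)\,\tilde w(m)$, exhibiting $c_1$ and $c_2$ as cohomologous already in $Z^1(W/I_2,M^{I_2})$. The only subtle point is the forced $I_2$-invariance of $m$; everything else is formal bookkeeping, but this step is precisely where the non-abelian nature of $M$ might have caused trouble, and it is clean only because cocycles vanish at the identity.
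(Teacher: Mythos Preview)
Your argument is correct and follows essentially the same route as the paper: assume the pullbacks of $c_1,c_2$ are cohomologous via some $m\in M^{I_1}$, evaluate the cocycle identity at elements of $I_2/I_1$ (which map to the identity in $W/I_2$), use $c_i(1)=1$ to deduce $m\in M^{I_2}$, and conclude that $c_1,c_2$ were already cohomologous. The paper's proof is terser but identical in substance.
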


\begin{proof}
For $\tau \in W/I_1$, let $\bar{\tau}$ denote its image in $W/I_2$.
Suppose $c_1, c_2 \in Z^1(W/I_2,M^{I_2})$ have the same image in $H^1(W/I_1,M^{I_1})$.  Then there exists $m \in M^{I_1}$ such that $c_1(\bar{\tau}) = m^{-1} c_2(\bar{\tau}) \tau(m)$ for all $\tau \in W/I_1$.  Since $\bar{\tau} = 1$ for all $\tau \in I_2$ and $c_1(1) = c_2(1) = 1$, we conclude that $m = \rho(m)$ for all $\rho \in I_2$.   That is, $m \in M^{I_2}$.  Hence, $c_1$ and $c_2$ are cohomologous.  
\end{proof}

The result below is ~\cite[Lemma~2.9]{AP22}.

\begin{lemma}
Suppose $E/F$ is a finite Galois extension, $M$ is a smooth $W_E$-group, and $r \in \R_{\geq 0}$. 
We have
\[H^1(W_E,M) = \lim_{\longrightarrow} H^1(W_E/\Group_E^{r},M^{\Group_E^{r}} )\]
and
\[H^1(W_E,M) = \lim_{\longrightarrow}  H^1(W_E/\Group_E^{r+},M^{\Group_E^{r+}} ).\]
\end{lemma}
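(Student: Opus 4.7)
The plan is to establish each identity by verifying that every cohomology class in $H^1(W_E,M)$ is inflated from one of the directed pieces, for $r$ sufficiently large. The transition maps in the directed system are already known to be injective by \eqref{equ:injection} (applied with $I_1=\Group_E^{r'}\leq I_2=\Group_E^r$ when $r\leq r'$, and similarly in the $r+$ version), so only surjectivity onto $H^1(W_E,M)$ needs to be shown.

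Fix a continuous $1$-cocycle $c\in Z^1(W_E,M)$. First, I would use the continuity condition in the definition of a $1$-cocycle to produce an open normal subgroup $J\trianglelefteq W_E$ on which $c$ vanishes. Next I would verify the key cofinality statement: for every open subgroup $H$ of $\Group_E$ there exists $r\geq 0$ with $\Group_E^r\subseteq H$. This is the heart of the matter, and I expect it to be the main obstacle. The argument is as follows: pick a finite Galois extension $K/E$ with $\ker\bigl(\Group_E\sra\Group(K/E)\bigr)\subseteq H$; since $\Group(K/E)$ is finite, its upper-numbering filtration terminates, so $\Group(K/E)^r=1$ for all sufficiently large $r$. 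By \Cref{lem:upperquot} (combined with the definition of $\Group_E^r$ as an inverse limit), the image of $\Group_E^r$ in $\Group(K/E)$ is $\Group(K/E)^r=1$, so $\Group_E^r$ lies in the kernel, hence in $H$. Taking $H=J\cap\Group_E$ yields some $r_0$ with $\Group_E^{r_0}\subseteq J$, so $c$ vanishes on $\Group_E^{r_0}$, and a fortiori on $\Group_E^{r_0+}\subseteq\Group_E^{r_0}$.

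To conclude I would check that the values of $c$ automatically land in the relevant fixed points. Both $\Group_E^{r_0}$ and $\Group_E^{r_0+}$ are normal in $W_E$ because the upper-numbering filtrations are stable under Galois conjugation at each finite level. For $\sigma\in\Group_E^{r_0}$ and $\tau\in W_E$, write $\sigma\tau=\tau\sigma'$ with $\sigma'\in\Group_E^{r_0}$; the cocycle identity then gives
\[
\sigma\,c(\tau)=c(\sigma)\cdot\sigma\,c(\tau)=c(\sigma\tau)=c(\tau\sigma')=c(\tau)\cdot\tau\,c(\sigma')=c(\tau),
\]
using $c(\sigma)=c(\sigma')=1$. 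Hence $c$ factors through a $1$-cocycle $\bar c\colon W_E/\Group_E^{r_0}\to M^{\Group_E^{r_0}}$ whose inflation recovers $c$, which establishes the first identity. The identical argument, with $\Group_E^{r_0+}$ in place of $\Group_E^{r_0}$, handles the second.
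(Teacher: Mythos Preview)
Your proof is correct and follows essentially the same approach as the paper's: injectivity from \eqref{equ:injection}, then for surjectivity use continuity together with the cocycle identity and normality of $\Group_E^{r_0}$ to descend $c$ to $W_E/\Group_E^{r_0}$. The paper simply asserts that continuity yields right-invariance under some $\Group_E^r$, whereas you spell out the cofinality argument showing $\Group_E^r\subseteq J$; your claim that the open subgroup $J$ produced by continuity can be taken \emph{normal} is not guaranteed by the definition, but this is harmless since your argument only uses that $\Group_E^{r_0}\subseteq J$ and that $\Group_E^{r_0}$ itself is normal.
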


 \begin{proof}  
  We prove the first statement. The proof of the second follows by replacing $r$ with $r+$ in what follows.

Setting $I_1 = 1$ in (\ref{equ:injection}), we see that the right-hand side injects into the left-hand side. For the other direction, choose $c \in Z^1(W_E,M)$. We need to show that $c \in Z^1(W_E/\Group_E^r,M^{\Group_E^r})$ for some $r$.  Since $c$ is continuous, there exists an $r\in \R_{\geq 0}$ such that $c(\tau \sigma) = c(\tau) $  for all $\tau \in W_E$ and for all $\sigma \in \Group_E^r$. Thus,  for all $\sigma \in \Group_E^r$ we have
     \[ c(1) =  c(1 \cdot \sigma) = c(1) c( \sigma).\]
    We conclude that $c(\sigma) = c(1) = 1$ for all $\sigma \in \Group_E^r$.  Fix  $\tau \in W_E$ and $\sigma \in \Group_E^r$.  Since $ \tau^{-1} \sigma \tau \in \Group_E^r$, we have 
    \[c(\tau) = c(\tau (\tau^{-1} \sigma \tau)) = c(\sigma \tau) = c(\sigma) \sigma c(\tau) = \sigma c(\tau).\]
Since this is true for all $\tau \in W_E$ and $\sigma \in \Group_E^r$, we conclude that $c \in Z^1(W_E/\Group_E^r,M^{\Group_E^r})$.
 \end{proof}

Suppose $W'$ is a subgroup of $W$ and $M$ is a smooth $W'$-group. 
Let $\Ind_{W'}^{W}M$ denote the group of  maps $f \colon W \rightarrow M$ such that
\begin{enumerate}
    \item $f(hw) = h \cdot f(w)$ for all $h \in W'$ and $w \in W$ and
    \item there is an open subgroup $I$ of $W$ such that $f(wi) = f(w)$ for all $i \in I$ and $w \in W$.
\end{enumerate} 
This is a smooth $W$-group under the right-regular action: $(w \cdot f)(x) = f(xw)$ for all $x, w \in W$.   In this context, Shapiro's lemma~\cite[Proposition 1.29]{BS64} says:
\begin{lemma}  \label{lem:shapirolem}  Suppose $M$ is a smooth $W'$-group.
 There is an isomorphism
\[ \Sh \colon H^1(W, \Ind_{W'}^{W}M) \xlongrightarrow{\sim} H^1(W',M)\]
induced by the map $c \mapsto (h \mapsto c(h)(1))$.  Here $c\in Z^1(W,\Ind_{W'}^{W} M)$ and $h \in W'$. \pushQED{\qed} 
  \qedhere 
    \popQED 
\end{lemma}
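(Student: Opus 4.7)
The plan is to check that $\Sh$ is well-defined on cocycles and descends to an isomorphism on cohomology, by exhibiting an explicit inverse. I would organise the proof in three steps.

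First I would verify that $\Sh$ sends $Z^1(W, \Ind_{W'}^W M)$ to $Z^1(W', M)$. Given $c \in Z^1(W, \Ind_{W'}^W M)$, pointwise multiplication in $\Ind_{W'}^W M$ together with the right-regular action gives $c(w_1 w_2)(1) = c(w_1)(1) \cdot c(w_2)(w_1)$; specialising to $w_1 = h_1 \in W'$ and using the induction relation $c(w_2)(h_1) = h_1 \cdot c(w_2)(1)$ yields the cocycle identity
\[
\Sh(c)(h_1 h_2) = \Sh(c)(h_1) \cdot h_1 \Sh(c)(h_2).
\]
Continuity of $\Sh(c)$ is inherited from that of $c$: an open $J \leq W$ for which $c$ is right-$J$-invariant gives right-$(J \cap W')$-invariance of $\Sh(c)$.

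Second, I would show $\Sh$ respects coboundaries. If $c_1(w) = F^{-1} \cdot c_2(w) \cdot (w \cdot F)$ for some $F \in \Ind_{W'}^W M$, then for $h \in W'$, evaluating at $1$ and using $(w \cdot F)(1) = F(w)$ together with $F(h) = h \cdot F(1)$ exhibits $\Sh(c_1)$ and $\Sh(c_2)$ as cohomologous via the element $F(1) \in M$.

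Third, I would construct an inverse on the level of $H^1$. Choose right-coset representatives $\{x_\alpha\}$ for $W' \backslash W$ with $x_0 = 1$, so every $g \in W$ factors uniquely as $g = h(g) x(g)$ with $h(g) \in W'$. Given $f \in Z^1(W', M)$, define $c_f \colon W \to \Ind_{W'}^W M$ by prescribing
\[
c_f(w)(x_\alpha) := f(h(x_\alpha w))
\]
on representatives and extending $W'$-equivariantly. A direct computation, using the identity $h(x_\alpha w_1 w_2) = h(x_\alpha w_1) \cdot h(x(x_\alpha w_1) w_2)$ combined with the cocycle relation for $f$, shows $c_f$ is a cocycle, and $\Sh(c_f)(h) = c_f(h)(1) = f(h)$. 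A change of representatives modifies $c_f$ only by a coboundary, so $[f] \mapsto [c_f]$ descends to a well-defined inverse to $\Sh$.

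The main obstacle I anticipate is the continuity and smoothness bookkeeping for the inverse. I need to check that each $c_f(w)$ lies in the smooth induction and that $w \mapsto c_f(w)$ is continuous with values in that induced module; both reduce to showing that if $f$ factors through an open quotient of $W'$, then $c_f(w)(x_\alpha)$ depends only on $w$ modulo a suitable open subgroup of $W$ as $x_\alpha$ ranges over finitely many cosets at a time. This is the technical heart of the argument and is precisely what is worked out in the cited Borel--Serre Proposition~1.29.
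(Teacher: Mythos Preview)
The paper does not give a proof of this lemma at all: the statement ends with a \texttt{\textbackslash qed} box immediately after the citation to \cite[Proposition~1.29]{BS64}, so the ``proof'' in the paper consists entirely of that reference. Your proposal therefore supplies strictly more than the paper does. The three-step outline you give (well-definedness on cocycles, compatibility with coboundaries, explicit inverse via coset representatives) is the standard argument for non-abelian Shapiro, and the computations you sketch in steps~1 and~2 are correct as written. You also correctly identify the one genuinely delicate point---verifying that each $c_f(w)$ lands in the \emph{smooth} induction and that $w\mapsto c_f(w)$ is continuous---and you defer to Borel--Serre for it, which is exactly what the paper does wholesale.

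One small completeness remark: in step~3 you verify $\Sh\circ(f\mapsto c_f)=\mathrm{id}$ but do not explicitly address the other composite $c\mapsto c_{\Sh(c)}$; to finish you would either check that this is cohomologous to $c$, or (equivalently) show directly that $\Sh$ is injective on $H^1$, i.e.\ that if $\Sh(c)$ is a coboundary via some $m\in M$ then $c$ is a coboundary via an element of $\Ind_{W'}^W M$ constructed from $m$ and the coset representatives. This is routine and is also covered by the Borel--Serre reference.
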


\subsection{On the depth of a Langlands parameter}
We present the definition of depth given in~\cite[Section~2]{AP22}.  
Let $\Fr$ denote a Frobenius element in $W_F$.

Suppose $G$ is a connected reductive $F$-group, and we let $G^\vee$ denote its dual.  An $L$-parameter for $G$ is a homomorphism 
\[ \varphi \colon  \SL_2  \times W_F \ra {}^LG :=   G^{\vee}  \rtimes W_F\]
such that
\begin{enumerate}  
    \item  the natural map $\pr_2 \circ \res_{W_F} \varphi $ from $W_F$ to itself is the identity
    \item  $\res_{\Group_F^0} \varphi \colon \Group_F^0 \ra {^{L}}G$ is continuous and $\varphi(\Fr)$ is semisimple   
    \item  $\res_{\SL_2}\varphi$ is a homomorphism of algebraic groups over $\C$.
\end{enumerate}

\begin{definition} Suppose $\varphi$ is an $L$-parameter for $G$.  
\begin{enumerate}
    \item 
We define $c_{\varphi} \in Z^1(W_F,G^\vee)$ by $\varphi(1,\sigma) = (c_{\varphi}(\sigma), \sigma)$ and let $[c_{\varphi}]$ denote the image of $c_{\varphi}$ in $H^1(W_F,G^\vee)$.
\item For $g \in G^\vee$ we define the $L$-parameter ${}^g\varphi$ for $G$ by ${}^g\varphi(\tau) = g \varphi(\tau) g^{-1}$ for $\tau \in W_F$.
\item  We define the depth, $\depth_F(\varphi)$, of $\varphi$ to be
\[\depth_F(\varphi) := \inf \{ r \in \R_{\geq 0} \, | \, [c_{\varphi}] \in H^1(W_F/\Group_F^{r+}, (G^\vee)^{\Group_F^{r+}}).\]
\end{enumerate}
\end{definition}

\begin{remark} If $G$ is $F$-split, then $c_{\varphi} = \res_{W_F} \varphi$.
\end{remark}

\begin{lemma}  Every element in the $G^\vee$-conjugacy class of an $L$-parameter $\varphi$ of $G$ has the same depth as $\varphi$.
\end{lemma}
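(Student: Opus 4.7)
The plan is to observe that $G^\vee$-conjugation is an inner automorphism of the target of the cocycle and therefore acts trivially on cohomology classes.

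First I would compute $c_{{}^g\varphi}$ explicitly in terms of $c_\varphi$. Writing elements of $G^\vee\rtimes W_F$ as pairs and identifying $g\in G^\vee$ with $(g,1)$, the semidirect-product multiplication gives
\[
{}^g\varphi(1,\sigma) = (g,1)\,(c_\varphi(\sigma),\sigma)\,(g^{-1},1)
= \bigl(g\cdot c_\varphi(\sigma)\cdot \sigma(g)^{-1},\ \sigma\bigr),
\]
so $c_{{}^g\varphi}(\sigma) = g\, c_\varphi(\sigma)\, \sigma(g)^{-1}$ for all $\sigma\in W_F$.

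Second, this formula is exactly the coboundary relation witnessing that $c_{{}^g\varphi}$ is cohomologous to $c_\varphi$: taking $m = g^{-1}\in G^\vee$, we have $m^{-1} c_\varphi(\sigma)\, \sigma(m) = g\, c_\varphi(\sigma)\, \sigma(g)^{-1} = c_{{}^g\varphi}(\sigma)$. Hence $[c_\varphi] = [c_{{}^g\varphi}]$ as elements of $H^1(W_F, G^\vee)$.

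Finally, I would invoke the injectivity of the inflation map $H^1(W_F/\Group_F^{r+},(G^\vee)^{\Group_F^{r+}})\hookrightarrow H^1(W_F,G^\vee)$ (the case $I_1=1$, $I_2=\Group_F^{r+}$ of the lemma that proved (\ref{equ:injection})). Via this injection we treat $H^1(W_F/\Group_F^{r+},(G^\vee)^{\Group_F^{r+}})$ as a subset of $H^1(W_F,G^\vee)$. Since $[c_\varphi]$ and $[c_{{}^g\varphi}]$ coincide in the larger set, one lies in this subset precisely when the other does; taking the infimum over $r$ yields $\depth_F({}^g\varphi) = \depth_F(\varphi)$.

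There is no real obstacle: the argument is a short direct computation followed by a formal reduction. The only thing to keep an eye on is making sure the cohomology setup is the one defined in the excerpt (nonabelian, continuous, with $G^\vee$ a smooth $W_F$-group via the action that factors through a finite Galois quotient), but since $g\in G^\vee$ is an honest element, the coboundary relation is perfectly well-behaved.
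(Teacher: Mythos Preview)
Your proof is correct and follows essentially the same approach as the paper: compute $c_{{}^g\varphi}(\sigma) = g\,c_\varphi(\sigma)\,\sigma(g)^{-1}$, conclude that $[c_\varphi] = [c_{{}^g\varphi}]$ in $H^1(W_F,G^\vee)$, and observe that the depth depends only on this cohomology class. The paper simply writes ``the result follows'' after establishing the equality of classes, whereas you spell out the role of the inflation injection, but this is only a difference in explicitness, not in strategy.
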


\begin{proof}  Suppose $\varphi$ is an $L$-parameter for $G$ and  $g \in G^\vee$.  A computation shows that $c_{{}^g\varphi} (\sigma) =  g \cdot c_{\varphi}(\sigma) \cdot \sigma(g)^{-1}$ for all $\sigma \in W_F$.  Thus
     $[c_{\varphi}] = [c_{{}^g\varphi}]$, and   the result follows.
\end{proof}

\begin{lemma}
    If $G$ splits over a tamely ramified extension and $\varphi$ is an $L$-parameter for $G$, then
    \[\depth_F(\varphi) = \inf \{ r \in \R_{\geq 0} \, | \, c_{\varphi} [\Group_F^{r+}] \subseteq G^\vee \text{ is trivial}\}.\]
\end{lemma}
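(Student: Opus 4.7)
The plan is to show that under the tame splitting assumption, the two infima defining the ``cohomological'' and ``naive'' depths coincide. The key reduction is that the Galois action on~$G^\vee$ is trivial on high enough ramification filtration pieces.

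First I would observe that since $G$ splits over some tame extension $E/F$, the $W_F$-action on $G^\vee$ factors through $W_F/W_E$. Because $E/F$ is tame, $\Group_F^{0+}$ is contained in $\Group_E \leq W_E$, so $\Group_F^{0+}$ acts trivially on $G^\vee$. In particular, for every $r \geq 0$ we have $\Group_F^{r+} \leq \Group_F^{0+}$ and therefore $(G^\vee)^{\Group_F^{r+}} = G^\vee$. This reduces the set appearing in $\depth_F(\varphi)$ to those $r$ for which $[c_\varphi]$ lies in the image of the inflation map $H^1(W_F/\Group_F^{r+}, G^\vee) \hookrightarrow H^1(W_F, G^\vee)$.

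Next I would prove the easy direction: if $c_\varphi(\Group_F^{r+}) = 1$, then $c_\varphi$ descends to a continuous cocycle on $W_F/\Group_F^{r+}$. Indeed, for $\sigma \in \Group_F^{r+}$ and $\tau \in W_F$, the cocycle relation combined with the trivial action of $\sigma$ on $G^\vee$ gives $c_\varphi(\tau\sigma) = c_\varphi(\tau)\cdot\tau c_\varphi(\sigma) = c_\varphi(\tau)$ and $c_\varphi(\sigma\tau) = c_\varphi(\sigma)\cdot\sigma c_\varphi(\tau) = c_\varphi(\tau)$, so $c_\varphi$ is constant on $\Group_F^{r+}$-cosets on both sides.

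For the harder direction, suppose $[c_\varphi] \in H^1(W_F/\Group_F^{r+}, G^\vee)$. By definition there exist a cocycle $c' \in Z^1(W_F/\Group_F^{r+}, G^\vee)$ and an element $g \in G^\vee$ with $c_\varphi(\tau) = g\, c'(\tau)\, \tau(g)^{-1}$ for all $\tau \in W_F$. Since $c'$ is pulled back from $W_F/\Group_F^{r+}$, evaluating the cocycle identity $c'(\tau\sigma) = c'(\tau)\cdot\tau c'(\sigma)$ with $\tau = 1$ forces $c'(\sigma) = 1$ for all $\sigma \in \Group_F^{r+}$. Specializing $c_\varphi(\tau) = g\, c'(\tau)\, \tau(g)^{-1}$ to $\tau = \sigma \in \Group_F^{r+}$ and using that $\sigma(g) = g$ (by the first paragraph), we get $c_\varphi(\sigma) = g \cdot 1 \cdot g^{-1} = 1$. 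Hence $c_\varphi(\Group_F^{r+}) = 1$.

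Combining the two directions shows that the two sets of $r$ appearing in the infima are equal, and thus the infima themselves agree, proving the lemma. No step is particularly delicate: the only substantive input is the tame-splitting hypothesis, which kills both the coefficient action on $G^\vee$ and any potential coboundary obstruction.
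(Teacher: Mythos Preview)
Your proof is correct and follows the same approach as the paper: reduce to $(G^\vee)^{\Group_F^{r+}}=G^\vee$ via the tame-splitting hypothesis, then identify membership of $[c_\varphi]$ in the inflated $H^1$ with the vanishing of $c_\varphi$ on~$\Group_F^{r+}$. The paper's proof is terser, simply asserting this last equivalence, while you spell out both directions explicitly (in particular the coboundary argument using $\sigma(g)=g$), which is a welcome clarification.
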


\begin{proof}
    Since $G$ splits over a tamely ramified extension, the wild inertia subgroup $\Group_F^{0+}$ acts trivially on $G^\vee$.  Thus, for $r \in \R_{\geq 0}$ we have
   \(  [c_{\varphi}] \in H^1(W_F/\Group_F^{r+}, (G^\vee)^{\Group_F^{r+}})\)
   if and only if
    \( [c_{\varphi}] \in H^1(W_F/\Group_F^{r+}, G^\vee)\)
    if and only if
    \( c_{\varphi} [\Group_F^{r+}] \subseteq G^\vee \text{ is trivial}\).
\end{proof}

\subsection{Depth and restriction of scalars on the Galois side}
In this section we present a result of Aubert and Plymen~\cite[Theorems~2.16 and~2.17]{AP22}, which is a generalization of the same result for tori by Mishra and Pattanayak~\cite[Lemma~4 and Corollary~5]{MP19}.

We begin with a result that Aubert and Plymen~\cite[Lemma~2.8]{AP22} call the \emph{submodule lemma}~\cite[Lemma 3]{MP19}.  
\begin{lemma} \label{lem:submodule} \pushQED{\qed} 
Suppose $W$ is a topological group and $I,W' \leq W$ with $I$ normal in $W$.  Then the map $f \mapsto (gI \mapsto f(g))$ is an isomorphism
\[\big( \Ind_{W'}^W M \big)^I \cong   \Ind_{W'/(W' \cap I)}^{W/I}(M^{(W' \cap I)})\]
for all smooth  $W'$-groups $M$.
    \qedhere 
    \popQED  
\end{lemma}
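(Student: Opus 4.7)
The plan is to verify that the given map is well-defined, takes values where claimed, is a $W/I$-equivariant group homomorphism, and admits an obvious inverse. The only input beyond routine unwinding of definitions is the normality of $I$ in $W$.

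First, unpacking the right-regular action, an element $f \in \Ind_{W'}^W M$ is $I$-invariant if and only if $f(gi) = f(g)$ for all $g \in W$ and $i \in I$; that is, $f$ is constant on right $I$-cosets.  Hence the formula $\bar f(gI) \defeq f(g)$ defines a genuine function $\bar f \colon W/I \to M$.

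Second, the heart of the argument is verifying that $\bar f$ takes values in $M^{W'\cap I}$.  For $h \in W' \cap I$ and $g \in W$, the left-equivariance of $f$ gives $h \cdot f(g) = f(hg)$; normality of $I$ in $W$ lets us write $hg = g(g^{-1}hg)$ with $g^{-1}hg \in I$, so $f(hg) = f(g)$ by $I$-invariance of $f$, whence $h \cdot f(g) = f(g)$ as required.  With this in hand, $\bar f$ inherits left-equivariance from $f$ in the evident way (noting that $W' \cap I$ is normal in $W'$ since $I$ is normal in $W$, so that the $W'$-action on $M^{W' \cap I}$ genuinely descends to a $W'/(W' \cap I)$-action).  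It is also smooth: if $J \leq W$ is an open subgroup with $f(wj) = f(w)$ for all $j \in J$, then $JI/I$ is an open subgroup of $W/I$ with the analogous property for $\bar f$.  Thus $\bar f \in \Ind_{W'/(W'\cap I)}^{W/I}(M^{W' \cap I})$.

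Third, the assignment $f \mapsto \bar f$ is a group homomorphism (the group structure on both sides is pointwise) and is $W/I$-equivariant under the right-regular actions, via a pointwise check using $(w \cdot f)(g) = f(gw)$.  Finally, to build the inverse, given $\phi$ in the target group we set $f(g) \defeq \phi(gI)$; one checks that $f$ is $I$-invariant by construction, is $W'$-equivariant because the $W'$-action on $M^{W' \cap I}$ factors through $W'/(W' \cap I)$, and is smooth by pulling back the smoothness subgroup for $\phi$ along $W \to W/I$.  This recipe yields a two-sided inverse, completing the isomorphism.  The only step requiring any real thought is the second one, where normality of $I$ is what forces the values of $\bar f$ into $M^{W' \cap I}$; everything else is formal.
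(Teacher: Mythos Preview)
Your proof is correct. The paper does not actually supply a proof of this lemma: the statement carries its own \textsf{qed} symbol, and immediately afterward the paper poses ``Prove Lemma~\ref{lem:submodule}'' as an exercise for the reader. So there is nothing in the paper to compare against; you have simply done the exercise, and your argument is the expected direct verification that the obvious map and its obvious inverse are well-defined and mutually inverse. Your identification of the one nontrivial point---that normality of $I$ is exactly what forces $f(g)\in M^{W'\cap I}$---is on target.
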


\begin{exercise}
    Prove Lemma~\ref{lem:submodule}.
\end{exercise}

Suppose $E/F$ is a finite Galois extension and $H$ is a connected reductive $E$-group.  Let $\Res_{E/F}$ denote the Weil restriction of scalars of $H$ (see~\cite[Section 2.i]{Mi17}) for an introduction to restriction of scalars for algebraic groups).

\begin{lemma} \label{lem:64}  For all $r \geq 0$ there is a canonical isomorphism
    \[ H^1 \bigr(W_F/\Group_F^{r+}, [(\Res_{E/F} H)^\vee]^{\Group_F^{r+}} \bigr) \cong H^1 \bigr(W_E/\Group_E^{\psi_{E/F}(r)+}, (H^\vee)^{\Group_E^{\psi_{E/F}(r)+}} \bigr). \]
\end{lemma}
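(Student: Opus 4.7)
The plan is to combine the standard $L$-group description of the Weil restriction of scalars with two tools already in hand: the submodule lemma (\Cref{lem:submodule}) and Shapiro's lemma (\Cref{lem:shapirolem}), using \Cref{lem:18} to translate between ramification filtrations on $W_F$ and~$W_E$.

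First, I would appeal to the standard functorial identification of Langlands duals under Weil restriction of scalars: as a smooth $W_F$-group,
\[
(\Res_{E/F} H)^\vee \cong \Ind_{W_E}^{W_F} H^\vee,
\]
where $W_E$ acts on $H^\vee$ via its quotient $\Gal(\bar{F}/E)$. This reduces the question to computing the cohomology of $W_F/\Group_F^{r+}$ on the $\Group_F^{r+}$-invariants of this induced module.

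Second, I would apply \Cref{lem:submodule} to the triple $(W_F, W_E, \Group_F^{r+})$ acting on $M = H^\vee$, obtaining a canonical isomorphism of smooth $W_F/\Group_F^{r+}$-groups
\[
\bigl(\Ind_{W_E}^{W_F} H^\vee\bigr)^{\Group_F^{r+}}
\;\cong\;
\Ind_{W_E/(W_E \cap \Group_F^{r+})}^{W_F/\Group_F^{r+}}
\bigl((H^\vee)^{W_E \cap \Group_F^{r+}}\bigr).
\]
Since $\Group_E = W_E \cap \Group_F = \Group_E^0$, the intersection $W_E\cap\Group_F^{r+}$ equals $\Group_E^0 \cap \Group_F^{r+}$, and \Cref{lem:18} identifies this with $\Group_E^{\psi_{E/F}(r)+}$. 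So the invariants are canonically an induced module built from $(H^\vee)^{\Group_E^{\psi_{E/F}(r)+}}$ along the subgroup inclusion $W_E/\Group_E^{\psi_{E/F}(r)+}\hookrightarrow W_F/\Group_F^{r+}$.

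Finally, I would apply Shapiro's lemma (\Cref{lem:shapirolem}) to this induced module, replacing the ambient topological group by the quotient $W_F/\Group_F^{r+}$ and the subgroup by $W_E/\Group_E^{\psi_{E/F}(r)+}$; this produces exactly the right-hand side of the claimed isomorphism.

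The main technical point — and the only place where real care is required — is the first step: verifying that the standard identification $(\Res_{E/F} H)^\vee \cong \Ind_{W_E}^{W_F} H^\vee$ is an isomorphism of smooth $W_F$-groups, compatible with the filtration by $\Group_F^{r+}$-invariants. The remaining two steps are purely formal compositions of the submodule lemma and Shapiro's lemma, and \Cref{lem:18} supplies the precise translation between the ramification filtrations that makes the resulting indices match.
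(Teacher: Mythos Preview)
Your proposal is correct and follows essentially the same route as the paper: identify $(\Res_{E/F}H)^\vee$ with $\Ind_{W_E}^{W_F}H^\vee$, apply the submodule lemma (\Cref{lem:submodule}) together with \Cref{lem:18} to rewrite the $\Group_F^{r+}$-invariants as an induction from $W_E/\Group_E^{\psi_{E/F}(r)+}$, and then apply Shapiro's lemma (\Cref{lem:shapirolem}). The paper treats the first identification as given without further comment, so your flagging it as the one substantive input is appropriate.
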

\begin{proof}
Note that as smooth $W_F/\Group_F^{r+}$ groups, we have
\begin{equation*}
    \begin{split}
        \big[(\Res_{E/F} H)^\vee\big]^{\Group_F^{r+}} & \cong  \big[\Ind_{W_E}^{W_F} H^\vee\big] ^{\Group_F^{r+}} \\
& \qquad \text{(thanks to Lemma~\ref{lem:submodule}}    )     \\
& \cong    \Ind_{W_E/(W_E \cap \Group_F^{r+})}^{W_F/\Group_F^{r+}} (H^\vee)^{\Group_F^{r+} \cap W_E}\\
& \qquad \text{(thanks to Lemma~\ref{lem:18}}    )  \\
& =   \Ind_{W_E/ \Group_E^{\psi_{E/F}(r)+}}^{W_F/\Group_F^{r+}} (H^\vee)^{\Group_E^{\psi_{E/F}(r)+}}.\\
    \end{split}
\end{equation*}
 The Shapiro isomorphism (Lemma~\ref{lem:shapirolem}) then tells us
\begin{equation*}
    \begin{split}
        H^1   &\bigl( W_F/\Group_F^{r+}, [(\Res_{E/F} H)^\vee]^{\Group_F^{r+}} \bigr) \\ &\cong H^1 \bigl(W_F/\Group_F^{r+},  \Ind_{W_E/ \Group_E^{\psi_{E/F}(r)+}}^{W_F/\Group_F^{r+}} (H^\vee)^{\Group_E^{\psi_{E/F}(r)+}} \bigr) \\
        &\cong H^1 \bigl(W_E/\Group_E^{\psi_{E/F}(r)+}, (H^\vee)^{\Group_E^{\psi_{E/F}(r)+}}\bigr). \qedhere
         \end{split}
\end{equation*}
\end{proof}

\begin{corollary}
    Suppose $H$ is a connected reductive $E$ group and $\varphi$ is an $L$-parameter for $H$.  Let $\Sh(\varphi)$ denote the $L$-parameter for $\Res_{E/F} (H)$ that arises via the Shapiro isomorphism.  Then
    \pushQED{\qed} 
    \[ \depth_F(\Sh(\varphi) ) = \psi_{E/F}(\depth_E(\varphi)). \qedhere \]
    \popQED
\end{corollary}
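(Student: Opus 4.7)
The plan is to apply \Cref{lem:64} and then chase through the definition of depth. The first step, and the only nontrivial one, is to verify that the canonical isomorphism provided by \Cref{lem:64} identifies the inflated class $[c_{\Sh(\varphi)}]$ on the $F$-side with the inflated class $[c_\varphi]$ on the $E$-side. Tracing through the construction in the proof of \Cref{lem:64}, the isomorphism is assembled by composing the submodule identification (\Cref{lem:submodule}) with Shapiro's isomorphism (\Cref{lem:shapirolem}); the latter is, by its very definition, the map that sends the cohomology class of $\Sh(\varphi)$ to the cohomology class of $\varphi$. So one needs to confirm compatibility with the inflations from the finite quotients $W_F/\Group_F^{r+}$ and $W_E/\Group_E^{\psi_{E/F}(r)+}$, which is essentially a naturality check for Shapiro.

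Granting this compatibility, the remainder of the argument is a manipulation of infima. Combining \Cref{lem:64} with the definition of $\depth_F(\Sh(\varphi))$ gives
\[
\depth_F(\Sh(\varphi)) = \inf\{r \geq 0 \mid [c_\varphi] \in H^1(W_E/\Group_E^{\psi_{E/F}(r)+},(H^\vee)^{\Group_E^{\psi_{E/F}(r)+}})\}.
\]
By the definition of $\depth_E$, the condition inside the infimum is equivalent to $\psi_{E/F}(r) \geq \depth_E(\varphi)$. Since $\psi_{E/F}$ is a continuous strictly increasing bijection on $\R_{\geq 0}$ with inverse $\varphi_{E/F}$, the infimum of such $r$ is obtained by applying $\psi_{E/F}^{-1}$ to $\depth_E(\varphi)$, yielding the claimed formula.

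The main obstacle will be the naturality verification in the first step: one must carefully unpack the composite in the proof of \Cref{lem:64} to confirm that it carries $[c_{\Sh(\varphi)}]$ to $[c_\varphi]$ at the level of inflated cohomology classes. Once that is in place, the remainder is a routine chase.
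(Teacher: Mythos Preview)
Your approach is exactly the one the paper intends: the corollary is stated without proof (just a \qed) because it is meant to be an immediate consequence of \Cref{lem:64}, and you have correctly spelled out the chain of reasoning, including flagging the naturality of Shapiro with inflation as the one point requiring care.

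However, your final sentence does not actually yield the displayed formula. You correctly compute
\[
\depth_F(\Sh(\varphi)) = \inf\{r \geq 0 : \psi_{E/F}(r) \geq \depth_E(\varphi)\}
= \psi_{E/F}^{-1}(\depth_E(\varphi)) = \varphi_{E/F}(\depth_E(\varphi)),
\]
but the statement to be proved reads $\psi_{E/F}(\depth_E(\varphi))$, and these differ whenever $E/F$ is wildly ramified and $\depth_E(\varphi)>0$. In fact your computation is right and the displayed formula in the corollary is a typo: the subsequent lemma on induced tori proves $\depth_{T(F)}(\chi) = \psi_{E/F}(\depth_F(\LLC(\chi)))$, which, unwinding the identifications $T = \Res_{E/F}\Gm$ and $\LLC(\chi) = \Sh(\tilde\chi)$ with $\depth_{T(F)}(\chi) = \depth_E(\tilde\chi)$, gives exactly $\depth_F(\Sh(\tilde\chi)) = \varphi_{E/F}(\depth_E(\tilde\chi))$. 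So the corollary should read $\varphi_{E/F}$, not $\psi_{E/F}$; you should not have glossed over the fact that $\psi_{E/F}^{-1}$ and $\psi_{E/F}$ are different functions.
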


\subsection{The Local Langlands Correspondence and depth for induced tori}

Let $E/F$ be a finite Galois extension.

Set $T = \Res_{E/F} \Gm$, the Weil restriction of scalars of $\Gm$.  Then $X^*(T) := \Hom(T,\Gm)$ is (canonically) a free $\Z$-module with basis $\Gal(E/F) \cong W_F/W_E$, and $T^\vee := X^*(T) \otimes_{\Z} \C^\times$ is (canonically)  $\Ind_{W_E}^{W_F} \C^\times = \oplus_{W_F/W_E} \C^\times$.
Thus, from Shapiro's Lemma~\ref{lem:shapirolem} we conclude that 
\[H^1(W_F,T^\vee) \cong H^1(W_F,\Ind_{W_E}^{W_F} \C^\times) \cong  H^1(W_E,\C^\times) = \Hom(W_E,\C^\times).\]
On the other hand $T(E) \cong E^\times$, and so the local class field theory isomorphism $\theta_E \colon   E^\times \xra{\sim} W_E^{\ab}$~\cite[V.1 Exc. 11]{neu99}, gives us
\[ \Hom(T(F),\C^\times) = \Hom(E^\times,\C^\times) \cong \Hom(W_E,\C^\times).\]
The composition of these maps yields the Local Langlands Correspondence (LLC) for induced tori:
\[ \LLC \colon \Hom(T(F),\C^\times) \xra{\sim} H^1(W_F,T^\vee).\]

\begin{definition}  The depth, $\depth_{T(F)}(\chi)$,  of $\chi \in \Irr(T(F))$ is defined to be
\[\depth_{T(F)}(\chi) := \min \{ r \in \Q_{\geq 0} \, | \, \chi[T(F)_{r+}] = 1\}. \]
\end{definition}

The following result is~\cite[Corollary 7]{MP19}.

\begin{lemma}  If $\chi \in \Irr(T(F))$, then
\[ \depth_{T(F)}(\chi) =  \psi_{E/F}(\depth_F( \LLC(\chi))).\]
\end{lemma}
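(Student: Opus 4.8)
The plan is to combine the three isomorphisms defining $\LLC$ with the depth-comparison results already established in the note. First I would unwind what $\depth_{T(F)}(\chi) = r$ means in terms of the corresponding character of $E^\times$: since $T(F) \cong E^\times$ and this isomorphism is compatible with the congruence filtrations (both given by $E^\times_{\geq \bullet}$), a character $\chi$ of $T(F)$ has depth $r$ exactly when the associated $\chi_E \in \Hom(E^\times,\C^\times)$ is trivial on $E^\times_{>r}$ but not on $E^\times_{\geq r}$ — i.e. $\depth_{T(F)}(\chi)$ equals the depth of $\chi_E$ measured against the valuation-normalized filtration on $E^\times$.

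Next I would push $\chi_E$ through local class field theory $\theta_E \colon E^\times \xrightarrow{\sim} W_E^{\ab}$. The key input here is that $\theta_E$ carries the filtration subgroup $E^\times_{\geq s}$ onto (the image in $W_E^{\ab}$ of) the upper-numbering ramification subgroup $\Group_E^s$, for $s > 0$ — this is exactly the normalized form of the classical statement that Artin reciprocity matches the unit filtration with the upper-numbering filtration, and it is the reason the note introduced the normalized upper numbering in the first place. Consequently the character $\psi \defeq \chi_E \circ \theta_E^{-1} \in \Hom(W_E,\C^\times)$ satisfies: $\psi$ is trivial on $\Group_E^{s+}$ iff $\chi_E$ is trivial on $E^\times_{>s}$. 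So $\depth_{T(F)}(\chi)$ equals $\inf\{s \geq 0 : \psi|_{\Group_E^{s+}} = 1\}$, which is precisely $\depth_E$ of the $L$-parameter for $\Gm/E$ attached to $\psi$ (here $\Gm$ is split, so $c_\varphi = \res_{W_E}\varphi = \psi$ and the Hochschild--Serre subtleties collapse).

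Finally I would invoke the Shapiro-side computation. Under the chain of isomorphisms, $\LLC(\chi) \in H^1(W_F,T^\vee)$ corresponds under Shapiro's lemma (Lemma~\ref{lem:shapirolem}) to the class of $\psi$ in $H^1(W_E,\C^\times)$; in the language of the preceding corollary, $\LLC(\chi) = \Sh(\varphi_\psi)$ where $\varphi_\psi$ is the $L$-parameter of $\Gm/E$ attached to $\psi$. That corollary gives $\depth_F(\Sh(\varphi_\psi)) = \psi_{E/F}(\depth_E(\varphi_\psi))$, and combining with $\depth_E(\varphi_\psi) = \depth_{T(F)}(\chi)$ from the previous paragraph — wait, the identity to prove runs the other way, so I would instead write $\depth_{T(F)}(\chi) = \depth_E(\varphi_\psi)$ and then $\depth_F(\LLC(\chi)) = \depth_F(\Sh(\varphi_\psi)) = \psi_{E/F}(\depth_E(\varphi_\psi))$, whence $\psi_{E/F}(\depth_F(\LLC(\chi))) = \psi_{E/F}(\psi_{E/F}^{-1}(\depth_{T(F)}(\chi))) $ — no: $\psi_{E/F}$ is not its own inverse. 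The correct bookkeeping is $\depth_F(\LLC(\chi)) = \varphi_{E/F}(\depth_{T(F)}(\chi))$, equivalently $\psi_{E/F}(\depth_F(\LLC(\chi))) = \depth_{T(F)}(\chi)$, which is the claim, using $\psi_{E/F} = \varphi_{E/F}^{-1}$.

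**Main obstacle.** The genuinely delicate step is the second paragraph: verifying that $\theta_E$ matches $E^\times_{\geq s}$ with $\Group_E^{s}$ in the \emph{normalized} indexing, i.e. tracking the normalization constant $e(E/F)$ correctly through Artin reciprocity. Classically this is the statement $\theta_E(U_E^{(n)}) = \Group(E^{\ab}/E)^n$ with integer $n$; here one must confirm that dividing the valuation by $e(E/F)$ on both sides preserves the match, and in particular that no shift by $c_{E/F}$ sneaks in (it does not, because the norm map on units is filtration-preserving in the right sense by Lemma~\ref{lem:normrange}, whereas the trace/conductor shift $c_{E/F}$ only affects the \emph{additive} picture). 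Everything else is formal: compatibility of $T(F)\cong E^\times$ with filtrations is immediate, and the Shapiro/restriction-of-scalars depth formula is already proved in the note.
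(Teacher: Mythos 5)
Your overall strategy matches the paper's: unwind the three isomorphisms defining $\LLC$, using (i) the compatibility of $T(F)\cong E^\times$ with filtrations, (ii) the normalized Artin reciprocity match between $E^\times_{\geq s}$ and $\Group_E^s$, and (iii) the Shapiro/restriction-of-scalars depth comparison built from Lemma~\ref{lem:64}. The paper carries out the whole chain at once as a string of identifications of $\Hom(T(F)/T(F)_{>s},\C^\times)$; you decompose it by first producing the intermediate $L$-parameter $\varphi_\psi$ of $\Gm$ over~$E$ and then invoking the Shapiro depth formula. That decomposition is legitimate, and your first two paragraphs are fine, including the worry about whether a $c_{E/F}$ shift could sneak in through reciprocity (it cannot, exactly for the reason you give).

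The problem is the final paragraph, which as written is not a proof but an unresolved contradiction. From the stated Corollary you first deduce $\depth_F(\LLC(\chi)) = \psi_{E/F}(\depth_{T(F)}(\chi))$; you then notice this does not give the claim and, with no further justification, replace it by the incompatible formula $\depth_F(\LLC(\chi)) = \varphi_{E/F}(\depth_{T(F)}(\chi))$. Since $\psi_{E/F}\neq\varphi_{E/F}$ unless $E/F$ is tamely ramified, you cannot simply swap one for the other after having derived the first. What you should have done is return to Lemma~\ref{lem:64} itself: that lemma says $\depth_F(\Sh(\varphi))\leq r$ if and only if $\depth_E(\varphi)\leq\psi_{E/F}(r)$, which gives $\depth_F(\Sh(\varphi)) = \varphi_{E/F}(\depth_E(\varphi))$, equivalently $\depth_E(\varphi) = \psi_{E/F}(\depth_F(\Sh(\varphi)))$. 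Feeding in $\depth_E(\varphi_\psi) = \depth_{T(F)}(\chi)$ then closes the argument cleanly. (Your instinct that the final formula should read $\depth_F(\LLC(\chi)) = \varphi_{E/F}(\depth_{T(F)}(\chi))$ is correct; this also agrees with the discussion and the example after the lemma in the paper, which record $\depth_F(\LLC(\chi)) = \depth_{T(F)}(\chi) + c_{E/F}$ for characters of large depth. The discrepancy you stumbled on is worth flagging as a likely sign error in the Corollary you invoked, but in a proof you must resolve it rather than assert past it.)
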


\begin{proof}  For all $s \in \R_{\geq 0}$ we have
    \begin{equation*}
        \begin{split}
            \Hom(T(F)/T(F)_{>s},\C^\times) &=  \Hom(E^\times/E^\times_{>s},\C^\times) \\
            & (\text{\cite[VI.4.1, Theorem 1]{cassels-frohlich} and }\\
            &\qquad \text{\cite[XIII.4, Proposition~12]{serre:local}}) \\
            &= \Hom(W_E/\Group_E^{s+},\C^\times) \\
             &= H^1(W_E/\Group_E^{s+},\C^\times) \\
            & \text{  (from Lemma~\ref{lem:64})} \\
            &= H^1(W_F/\Group_F^{\varphi_{E/F}(s)+},[\Ind^{W_E}_{W_F} \C^\times]^{\Group_F^{\varphi_{E/F}(s)+}}) \\
             &= H^1(W_F/\Group_F^{\varphi_{E/F}(s)+},[T^\vee]^{\Group_F^{\varphi_{E/F}(s)+}}) 
        \end{split}
    \end{equation*}
    Since this holds for all $s$, the lemma follows.
\end{proof}

 If $\chi$ has depth zero or $E/F$ is tamely ramified, then we conclude that 
\[ \depth_{T(F)}(\chi) =  \depth_F( \LLC(\chi)).\]  However, if $\chi$ has positive depth and $\Group(E/F)_{>0}$ is not trivial, then since $\psi_{E/F}(x) < x$ for all $x > 0$, we will have 
\[ \depth_{T(F)}(\chi) =  \psi_{E/F}(\depth_F( \LLC(\chi)) ) < \depth_F(\LLC(\chi))\]
and for all $\chi$ of depth greater than or equal to  $\ell(E/F)$ we will have 
\[\depth_F( \LLC(\chi)) = \depth_{T(F)}(\chi) + c_{E/F}.\]

\begin{example}
    Suppose $F = \Q_p$, $E = \Q_p(\zeta_{p^n})$ with $n$ large, and $S_n$ is the $F$-torus $\Gm \times R_{E/F}(\Gm)$.  If $\chi \in \Irr(S_n(F))$, then $\chi = \chi_1 \otimes \chi_2$ with $\chi_1 \in \Irr(\Gm(F))$ and $\chi_2 \in \Irr(R_{E/F}(\Gm)(F))$.  Suppose $\chi_1$ has depth $r$ with $r \in \Z_{>1}$ and $\chi_2$ has depth $s$ with $s \cdot e(E/F) \in \Z_{>1}$.  Note that if $u(E/F) \leq s < r < s + c_{E/F}$, then $\depth_{S_n(F)}(\chi) = r$ while $\depth_F(\LLC(\chi)) = s + c_{E/F}$. 
    Since $c_{E/F} > n-2$ (see Example~\ref{ex:exampofuandl}), there is a very large number of characters on $S_n(F)$ for which the depths of the characters on the  $p$-adic and Galois sides are ``independent'' of each other.  
\end{example}

\begin{remark}
    A new definition of depth on the Galois side was recently proposed in~\cite{Mis25}.  This new definition is designed to preserve depth.
\end{remark}

\appendix

\section{Comparison with classical definitions} \label{app:comparison}\noindent Suppose $F \leq E \leq L$ with $L/E$ finite Galois. 

For $s \in \R_{\geq 0}$ let $\tilde{\Group}(L/E)_s$ denote the classical lower numbering filtration subgroup of $\Gal(L/E)$.   Recall that in the classical
setting the valuation  is normalized so that the valuation of $L^\times$ is $\Z$.   This means that if $\tilde{\Group}(L/E)_{r:r+} \neq 1$, then $r \in \Z$. We have
\[{\Group}(L/E)_s = \tilde{\Group}(L/E)_{e(L/F) \cdot s}.\]
 We define \(\tilde{\varphi}_{L/E} \colon \R_{\geq 0} \rightarrow \R_{\geq 0}\) by
\[\tilde{\varphi}_{L/E}(x)  := \int_0^x [\tilde{\Group}(L/E)_0: \tilde{\Group}(L/E)_t]^{-1} \, dt\]
and its inverse \(\tilde{\psi}_{L/E} \colon \R_{\geq 0} \rightarrow \R_{\geq 0}\) is the classical Hasse-Herbrand function.

\begin{lemma}
    For all $x \geq 0$ we have
    \[\varphi_{L/E}(x) = \frac{1}{e(E/F)} \tilde{\varphi}_{L/E} (e(L/F) \cdot x)\]
    and for all $y \geq 0$ we have
       \[\psi_{L/E}(y) = \frac{1}{e(L/F)} \tilde{\psi}_{L/E} (e(E/F) \cdot y).\]
\end{lemma}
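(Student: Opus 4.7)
My plan is to verify the first identity by comparing derivatives (as was done in the proof of \Cref{lem:12}), and then deduce the second identity from the first by substituting $y = \varphi_{L/E}(x)$ and inverting.

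Set $f(x) = \frac{1}{e(E/F)}\tilde{\varphi}_{L/E}(e(L/F)\cdot x)$. Both $f$ and $\varphi_{L/E}$ are continuous, piecewise linear functions on $\R_{\geq 0}$, and they agree at $x = 0$ (both are zero there, since $\tilde{\varphi}_{L/E}(0) = 0$ and $\min(\depth_{L/E}(\sigma),0) = 0$ for all $\sigma \in \Group(L/E)$). It therefore suffices to check that $f'(x) = \varphi'_{L/E}(x)$ at all but finitely many $x$.

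At such an $x$, the chain rule and the defining formula for $\tilde{\varphi}'_{L/E}$ give
\[
f'(x) = \frac{e(L/F)}{e(E/F)}\cdot [\tilde{\Group}(L/E)_0 : \tilde{\Group}(L/E)_{e(L/F)\cdot x}]^{-1}.
\]
Using the conversion $\Group(L/E)_x = \tilde{\Group}(L/E)_{e(L/F)\cdot x}$ (and in particular $\Group(L/E) = \Group(L/E)_0 = \tilde{\Group}(L/E)_0$), together with $|\Group(L/E)| = e(L/E)$ and multiplicativity $e(L/F) = e(L/E)\cdot e(E/F)$, I compute
\[
f'(x) = \frac{e(L/E)\cdot e(E/F)}{e(E/F)}\cdot\frac{|\Group(L/E)_x|}{e(L/E)} = |\Group(L/E)_x| = \varphi'_{L/E}(x),
\]
where the last equality is the derivative formula recorded just after \Cref{def:newphi}. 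This establishes the first identity.

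For the second identity, write $y = \varphi_{L/E}(x)$, so that $x = \psi_{L/E}(y)$. Multiplying the first identity by $e(E/F)$ yields $\tilde{\varphi}_{L/E}(e(L/F)\cdot\psi_{L/E}(y)) = e(E/F)\cdot y$, and applying $\tilde{\psi}_{L/E}$ to both sides gives $e(L/F)\cdot\psi_{L/E}(y) = \tilde{\psi}_{L/E}(e(E/F)\cdot y)$, as desired. There is no real obstacle here; the only thing to watch is correctly identifying the factor $|\Group(L/E)| = e(L/E)$ with the ratio $e(L/F)/e(E/F)$ so the normalizations match.
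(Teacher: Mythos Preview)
Your proposal is correct and follows essentially the same approach as the paper: both reduce the second identity to the first by inversion, and both verify the first identity by matching the derivative $\varphi'_{L/E}(x)=|\Group(L/E)_x|$ against the derivative of the rescaled $\tilde{\varphi}_{L/E}$, using the conversion $\Group(L/E)_x=\tilde{\Group}(L/E)_{e(L/F)\cdot x}$, the equality $|\Group(L/E)_0|=e(L/E)$, and the multiplicativity $e(L/F)=e(L/E)\cdot e(E/F)$. The only cosmetic difference is that the paper writes this as a chain of integral substitutions rather than invoking the chain rule, but the computations are identical.
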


\begin{proof}
 Since $\psi_{L/E}$ is the inverse of $\varphi_{L/E}$ and  $\tilde{\psi}_{L/E}$ is the inverse of $\tilde{\varphi}_{L/E}$, it is enough to verify the first equality.
 
   Suppose $x \in \R_{\geq 0}$.   We have
    \begin{equation*}
        \begin{split}
            \varphi_{L/E}(x) &= \int_0^x \left| \Group(L/E)_s \right| \, ds = e(L/E) \int_0^x \frac{\left| \Group(L/E)_s \right|}{\left| \Group(L/E)_0 \right|} \, ds\\
            &= e(L/E) \int_0^x \left[ \Group(L/E)_0 : \Group(L/E)_s \right]^{-1} \, ds\\
             &= e(L/E) \int_0^x \left[ \tilde{\Group}(L/E)_0 : \tilde{\Group}(L/E)_{e(L/F) \cdot s} \right]^{-1} \, ds\\
              &= \frac{e(L/E)}{e(L/F)} \int_0^{e(L/F) \cdot x} \left[ \tilde{\Group}(L/E)_0 : \tilde{\Group}(L/E)_{t} \right]^{-1} \, dt\\
              &= \frac{1}{e(E/F)} \tilde{\varphi}_{L/E}(e(L/F) \cdot x). \qedhere
        \end{split}
    \end{equation*}
\end{proof}

For $t \geq 0$, the upper numbering filtration  subgroup in the classical setting is defined by
\[ \tilde{\Group}(L/E)^t := \tilde{\Group}(L/E)_{\tilde{\psi}(t)}.\]

\begin{lemma} \label{lem:uppernumberingnorm} For $t \geq 0$ we have
\[ \Group(L/E)^t = \tilde{\Group}(L/E)^{e(E/F) \cdot t}. \]
\end{lemma}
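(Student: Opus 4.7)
The plan is to chase through the definitions, using the previous lemma that compares $\psi_{L/E}$ with $\tilde{\psi}_{L/E}$. The key input is that by \Cref{def:23}, $\Group(L/E)^t = \Group(L/E)_{\psi_{L/E}(t)}$, together with the classical analogue $\tilde{\Group}(L/E)^s = \tilde{\Group}(L/E)_{\tilde{\psi}_{L/E}(s)}$ from the definition given just above the statement.

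Concretely, I would start from the left-hand side and unwind: $\Group(L/E)^t = \Group(L/E)_{\psi_{L/E}(t)}$. Then apply the normalization comparison for lower numbering stated at the beginning of the appendix, namely $\Group(L/E)_r = \tilde{\Group}(L/E)_{e(L/F)\cdot r}$, to rewrite this as $\tilde{\Group}(L/E)_{e(L/F)\cdot\psi_{L/E}(t)}$. Next, invoke the previous lemma, which gives $e(L/F)\cdot\psi_{L/E}(t) = \tilde{\psi}_{L/E}(e(E/F)\cdot t)$. Finally, recognize $\tilde{\Group}(L/E)_{\tilde{\psi}_{L/E}(e(E/F)\cdot t)} = \tilde{\Group}(L/E)^{e(E/F)\cdot t}$ by the classical definition. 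There is no real obstacle here: the statement is a bookkeeping consequence of the two conversion formulas, and the main thing is to be careful about which side the factor $e(E/F)$ versus $e(L/F)$ appears on. The computation is a single chain of equalities, so I would present it in a display such as
\[
\Group(L/E)^t = \Group(L/E)_{\psi_{L/E}(t)} = \tilde{\Group}(L/E)_{e(L/F)\cdot\psi_{L/E}(t)} = \tilde{\Group}(L/E)_{\tilde{\psi}_{L/E}(e(E/F)\cdot t)} = \tilde{\Group}(L/E)^{e(E/F)\cdot t}.
\]
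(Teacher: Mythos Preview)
Your proof is correct and essentially identical to the paper's: both unwind $\Group(L/E)^t$ via the definition, convert to the classical lower numbering using $\Group(L/E)_r = \tilde{\Group}(L/E)_{e(L/F)\cdot r}$, and then apply the previous lemma. The only cosmetic difference is that the paper passes through $\tilde{\varphi}_{L/E}$ (writing $\tilde{\Group}(L/E)_s = \tilde{\Group}(L/E)^{\tilde{\varphi}_{L/E}(s)}$ and using the $\varphi$-conversion) whereas you apply the $\psi$-conversion directly to identify $e(L/F)\cdot\psi_{L/E}(t) = \tilde{\psi}_{L/E}(e(E/F)\cdot t)$; these are the same step in inverse form.
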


\begin{remark}  If $E$ is an unramified extension of $F$, then the classical indexing of the  upper numbering filtration subgroups agrees with the indexing presented in this note.
\end{remark}

\begin{proof}
    Suppose $t \in \R_{\geq 0}$.  We have
    \begin{equation*}
        \begin{split}
            \Group(L/E)^t &= \Group(L/E)_{\psi_{L/E}(t)} = \tilde{\Group}(L/E)_{e(L/F) \cdot \psi_{L/E}(t)}\\
            &= \tilde{\Group}(L/E)^{\tilde{\varphi}_{L/E}(e(L/F) \cdot \psi_{L/E}(t))} \\
             &= \tilde{\Group}(L/E)^{e(E/F) \cdot {\varphi}_{L/E}(\psi_{L/E}(t))} \\
             &= \tilde{\Group}(L/E)^{e(E/F) \cdot t }. \qedhere
        \end{split}
    \end{equation*}
\end{proof}

\begin{exercise}  \label{exc:classsical_sublemma}  Suppose $M$ is a finite Galois extensions of $E$ with $L \leq M$.  Use Lemma~\ref{lem:18} to 
    show
  \[    \tilde{\Group}(M/F)^r \cap \tilde{\Group}(M/L) = \tilde{\Group}(M/E)^{\tilde{\psi}_{L/F} ( r)}. \]
\end{exercise}

\section{Non-Galois extensions} \label{sec:nongalois}
\noindent Here we briefly explain how most of the basic definitions
of ramification theory can be extended to arbitrary finite
separable extensions, not necessarily Galois.
We leave it as an exercise to the reader
to fully work out the generalization in this setting.
The appendix to \cite{deligne84} has many steps in this direction as well.

Let $E/F$ be a finite separable extension
and let $\bar{E}$ be a separable closure of~$E$.
Let $L/E$ be a finite separable extension
which, in this \namecref{sec:nongalois},
we do not necessarily assume is contained in~$\bar{E}$.
As before, let $\val$ be the unique valuation
$\val\colon\bar E\to\bbQ\cup\{\infty\}$ such that $\val(F^\times)=\bbZ$.

When $L/E$ is not Galois, we can simply define the Herbrand functions
by reducing to the Galois case:
choose a finite Galois extension $\tilde{L}/E$ containing~$L$ and set
\[
\varphi_{L/E} := \varphi_{\tilde{L}/E} \circ \varphi^{-1}_{\tilde{L}/L}.
\]
Alternatively, and perhaps more conceptually,
it is possible to extend our basic definitions
to non-Galois extensions directly,
rather than first passing through the Galois case, as follows.

Much of Galois theory can be extended to separable,
non-Galois extensions by replacing the Galois group $\Gal(L/E)$
with the $\Gal(\bar E/E)$-set
$\Hom_{E\tn{-alg}}(L,\bar E)$ of $E$-algebra embeddings $L\to\bar E$.
This is the perspective of Grothendieck's Galois theory \cite{sga1} --
see \cite{lenstra} for a textbook treatment --
whose simplest manifestation is the basic fact that
\[
\big|{\Hom_{E\tn{-alg}}}(L,\bar E)\big| = [L:E].
\]
Moreover, among the discrete $\Gal(\bar E/E)$-sets,
the transitive sets are precisely those isomorphic to $\Hom_{E\tn{-alg}}(L,\bar E)$
for some finite separable extension~$L/E$.
This relationship connects ideas from Galois theory
to the category of discrete $\Gal(\bar E/E)$-sets,
a connection that we will extend to ramification theory.

Restricting this analogy to $\Group(L/E)$ rather than $\Gal(L/E)$ is possible,
but slightly awkward.
To do so, we fix an embedding $\iota\colon k_L\to k_{\bar E}$
and define $\Hom_{E\tn{-alg},\iota}(L,\bar E)$
to be the subset of $\Hom_{E\tn{-alg}}(L,\bar E)$
of maps inducing $\iota$ on residue fields.  
Writing $\Group_E$ for the inertia subgroup of~$\Gal(\bar E/E)$,
the natural action of $\Group_E$ on $\bar E$ makes
$\Hom_{E\tn{-alg},\iota}(L,\bar E)$ into a discrete transitive $\Group_E$-set.
The action of $\Aut(L/E)$ on $L$
does not induce an action of $\Aut(L/E)$ on $\Hom_{E\tn{-alg},\iota}(L,\bar E)$,
simply because if $\sigma\in\Aut(L/E)$
induces the automorphism $\bar\sigma\in\Gal(k_L/k_E)$,
then $\sigma$ will map $\Hom_{E\tn{-alg},\iota}(L,\bar E)$
to $\Hom_{E\tn{-alg},\iota\circ\bar\sigma}(L,\bar E)$.
However, the inertia subgroup
\[
\Aut_0(L/E) \defeq \ker\bigl(\Aut(L/E)\to\Gal(k_L/k_E)\bigr)
\]
does act on $\Hom_{E\tn{-alg},\iota\circ\bar\sigma}(L,\bar E)$,
since it preserves $\iota$,
and in fact, $\Aut_0(L/E)$ is the group of
$\Group_E$-equivariant automorphisms of
$\Hom_{E\tn{-alg},\iota\circ\bar\sigma}(L,\bar E)$.

Given $\lambda,\mu\in\Hom_{E\tn{-alg},\iota}(L,\bar E)$, define
\[
\depth_{L/E}(\lambda,\mu) \defeq
\val\left(\frac{\lambda(\varpi_L) - \mu(\varpi_L)}{\lambda(\varpi_L)}\right).
\]
The non-inertial version of this definition is due to Weil
\cite[page~148]{We95}.
To compare this notation with its previous meaning,
suppose $L/E$ is Galois with $L\subseteq\bar E$.
If we write $\lambda^{-1}\mu$ for the unique element of
$\Group(L/E)$ such that $\lambda\circ(\lambda^{-1}\mu) = \mu$, then
$\depth_{L/E}(\lambda,\mu) = \depth_{L/E}(\lambda^{-1}\mu)$.

The function $\depth_{L/E}$ satisfies the following properties for all $\lambda, \mu, \tau \in \Hom_{E\tn{-alg},\iota}(L,\bar E)$,
generalizing \Cref{depth-properties}:
\begin{enumerate}
\setcounter{enumi}{-1}
\item $\depth_{L/E}(\lambda,\mu) = \infty$ if and only if $\lambda=\mu$.
\item $\depth_{L/E}(\lambda,\mu) = \depth_{L/E}(\mu,\lambda)$.
\item $\depth_{L/E}(\lambda,\mu)
\geq \min\bigl(\depth_{L/E}(\lambda,\tau),\depth_{L/E}(\tau,\mu)\bigr)$ 
\end{enumerate}
It follows that  the formula
\begin{equation} \label{metric}
d(\lambda,\mu) \defeq p^{-\depth_{L/E}(\lambda,\mu)}
\end{equation}
defines an ultra metric on $\Hom_{E\tn{-alg}}(L,\bar E)$.

This construction extends to any discrete $\Group_E$-set~$X$ as follows. 
The function $\depth_{L/E}$ is $\Group_E\times\Aut_0(L/E)$-invariant
in the sense that for every $\sigma\in\Group_E$ and $\alpha\in\Aut_0(L/E)$,
\begin{equation} \label{depth-equivariance}
\depth_{L/E}(\lambda,\mu) = \depth_{L/E}(\sigma\circ\lambda\circ\alpha,
\sigma\circ\mu\circ\alpha).
\end{equation}
When $\Group_E$ acts transitively on $X$, there is some $L/E$
for which there exists a $\Group_E$-equivariant isomorphism
$i\colon X\simeq\Hom_{E\tn{-alg},\iota}(L,\bar{E})$.
We can use this isomorphism to define a depth function on $X$ by
\[
\depth_X(x,y) \defeq \depth_{L/E}(i(x),i(y)).
\]
Using \eqref{depth-equivariance},
we see that this definition is independent of the choice of~$i$.
For a general discrete $X$, we decompose $X$ uniquely as a disjoint union
$X = \bigsqcup_{i\in I} X_i$ of homogeneous $\Group_E$-sets~$X_i$,
choose a negative constant such as $-1$,
and define a depth function on~$X$ as follows,
where $x_i\in X_i$ and $y_j\in X_j$:
\[
\depth_X(x_i,y_j) \defeq \begin{cases}
\depth_{X_i}(x_i,y_j) & \tn{if }i=j \\
-1 & \tn{if }i\neq j.
\end{cases}
\]
Mimicking \eqref{metric}, for $x,y \in X$ we set $d(x,y) \defeq p^{-\depth_X(x,y)}$.
In this way we have an $\Group_E\times\Aut_{\Group_E}(X)$-invariant
metric on any discrete $\Group_E$-set~$X$.

One advantage of extending ramification theory to non-Galois extensions
is that it gives a new interpretation of the depth function as a distribution on~$\Group_E$, see~\cite[page~152]{We95}.

\begin{lemma}[Weil]
The following assignment on basic open sets of $\Group_E$
extends to a distribution on~$\Group_E$:
\[
H(1_{\sigma\Group_L}) = \begin{cases}
    -c_{L/E} & \tn{if $\sigma\in\Group_L$} \\
    \depth_{\Group_E/\Group_L}(\sigma\Group_L,\Group_L) & \tn{if $\sigma\notin\Group_L$.}
\end{cases}
\]
Here $L$ ranges over finite subextensions of $\bar E/E$ and $\sigma \Group_L \in \Group_E/\Group_L$.
\end{lemma}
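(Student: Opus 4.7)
The plan is to verify that the prescribed values extend to a consistent, finitely additive assignment on the algebra of compact open subsets of~$\Group_E$, so that they define a linear functional on the locally constant compactly supported functions. Any such function is a finite $\bbZ$-linear combination of characteristic functions $1_{\sigma\Group_L}$ for basic cosets as $L$ ranges over finite subextensions of $\bar E/E$ inside~$\bar E$, and any two such representations of the same function are related by the refinement rule: whenever $L\subseteq M$ and $\sigma\Group_L = \bigsqcup_\tau \sigma\tau\Group_M$, with $\tau$ ranging over coset representatives of $\Group_M$ in $\Group_L$, one needs
\[
H(1_{\sigma\Group_L}) \;=\; \sum_\tau H(1_{\sigma\tau\Group_M}).
\]
So the task is to check this single refinement identity.

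I would split into the two cases $\sigma\in\Group_L$ and $\sigma\notin\Group_L$. When $\sigma\in\Group_L$, replacing $\sigma$ by~$1$ merely permutes the $\tau$'s, so without loss of generality $\sigma=1$; exactly one $\tau$ lies in~$\Group_M$ (contributing $-c_{M/E}$), and the remaining $\tau$ contribute $\depth_{\Group_E/\Group_M}(\tau\Group_M,\Group_M)$. Using the additivity $c_{M/E}=c_{M/L}+c_{L/E}$ from \Cref{different-additivity}, the required identity collapses to
\[
c_{M/L} \;=\; \sum_{\tau\Group_M\neq\Group_M}\depth_{\Group_E/\Group_M}(\tau\Group_M,\Group_M),
\]
where $\tau$ runs over non-identity cosets of $\Group_M$ in~$\Group_L$. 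When $\sigma\notin\Group_L$, every $\sigma\tau$ lies outside~$\Group_M$, so both sides are pure depth sums and one needs
\[
\depth_{\Group_E/\Group_L}(\sigma\Group_L,\Group_L) \;=\; \sum_\tau \depth_{\Group_E/\Group_M}(\sigma\tau\Group_M,\Group_M).
\]

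Both identities will follow by passing to a finite Galois extension $\tilde M/E$ inside $\bar E$ containing $M$. For the first identity I would apply \Cref{depth-sum} to the Galois extension $\tilde M/L$, using \Cref{depth-properties}(3) to identify $\depth_{\tilde M/E}(\eta)=\depth_{\tilde M/L}(\eta)$ for $\eta\in\Group(\tilde M/L)$, then group the sum over $\Group(\tilde M/L)\setminus\{1\}$ according to cosets of $\Group(\tilde M/M)$; the $\clubsuit$ formula of \Cref{lem:firstformula} applied to the tower $\tilde M/M/L$ collapses each inner sum to the corresponding $\depth_{M/L}$, giving the required sum over $\Group_L/\Group_M\setminus\{\Group_M\}$. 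The second identity is handled by the same Galois-closure strategy: apply $\clubsuit$ to the tower $\tilde M/L/E$, then factor each inner sum over $\Group(\tilde M/M)$-orbits through~$M$.

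The main obstacle will be the bookkeeping required in the non-Galois case, since \Cref{depth-sum} and \Cref{lem:firstformula} are stated only when the ambient extensions are Galois. The Galois-closure reduction resolves this cleanly: \Cref{different-additivity} controls how $c_{-/-}$ behaves in towers, while $\clubsuit$ controls the exchange of inner and outer depth sums at an intermediate, possibly non-Galois, level. Once both refinement identities are established, finite additivity on disjoint unions of basic cosets gives the extension to a well-defined distribution on~$\Group_E$.
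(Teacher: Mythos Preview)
Your reduction to the refinement identity, the case split on whether $\sigma\in\Group_L$, and the resulting two identities all match the paper's proof exactly. The difference lies in how the two identities are justified: the paper simply cites Weil \cite[(11), p.~152]{We95} and \cite[(9), p.~149]{We95} as non-Galois generalizations of \Cref{lem:firstformula} and \Cref{depth-sum}, whereas you propose to derive them from the stated Galois versions by passing to a Galois closure~$\tilde M/E$.

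This is where there is a genuine gap. When you invoke ``$\clubsuit$ applied to the tower $\tilde M/M/L$'' (or $\tilde M/L/E$), the hypothesis of \Cref{lem:firstformula} requires the \emph{intermediate} extension to be Galois over the base, and $M/L$ (resp.\ $L/E$) need not be; taking the Galois closure makes $\tilde M/E$, $\tilde M/L$, $\tilde M/M$ Galois but does nothing for $M/L$ or $L/E$. Concretely, after you correctly write
\[
c_{M/L}=c_{\tilde M/L}-c_{\tilde M/M}=\sum_{\eta\in\Group(\tilde M/L)\setminus\Group(\tilde M/M)}\depth_{\tilde M/L}(\eta),
\]
collapsing each coset-block $\sum_{\rho\in\Group(\tilde M/M)}\depth_{\tilde M/L}(\sigma\rho)$ to $\depth_{\Group_E/\Group_M}(\sigma\Group_M,\Group_M)=\val\bigl((\sigma(\varpi_M)-\varpi_M)/\varpi_M\bigr)$ is \emph{exactly} the non-Galois $\clubsuit$ for the tower $\tilde M/M/L$, which \Cref{lem:firstformula} as stated does not supply. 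The same issue recurs in your treatment of the second identity. What actually closes the gap is not the Galois-closure bookkeeping but the observation that Tate's ideal-theoretic argument behind \Cref{lem:firstformula} (see \cite[IV.1, Prop.~3]{serre:local}) only uses that the top extension is Galois over the middle and that a uniformizer of the middle field generates its ring of integers over the base in the totally ramified case; it therefore extends verbatim when the intermediate extension is not Galois. This is what Weil records, and the paper opts to cite him rather than rerun the argument.
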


\noindent In other words, to spell out the definitions above in this particular case,
the tautological embedding $\tn{taut}\colon L \hookrightarrow \bar E$
induces an identification between
$\Group_E/\Group_L$ and $\Hom_{E\tn{-alg},\iota}(L,\bar E)$,
and if $\sigma \Group_L$ corresponds to $\mu\colon L\to\bar E$,
then $\depth_{\Group_E/\Group_L}(\sigma \Group_L,\Group_L)
= \depth_{L/E}(\mu,\tn{taut})$.  

\begin{proof}
Let $L/K/E$ be a tower of finite separable extensions contained in~$\bar E$
and suppose the coset $\tau\Group_K \in \Group_E/\Group_K$ decomposes into $\Group_L$-cosets as
$\tau\Group_K = \bigsqcup_{i\in I} \sigma_i\Group_L$.
We need to show that
\[
H(1_{\tau\Group_K}) = \sum_{i\in I} H(1_{\sigma_i\Group_L}).
\]
If $\tau\notin\Group_K$, then this claim amounts to
a generalization of \Cref{lem:firstformula},
\[
\depth_{\Group_E/\Group_K}(\tau\Group_K, \Group_K)
= \sum_{i\in I} \depth_{\Group_E/\Group_L}(\sigma_i\Group_L,\Group_L),
\]
which is \cite[(11), page~152]{We95} up to our renormalization.
If $\tau\in\Group_K$, then this claim amounts to the equation
\[
-c_{K/E} = -c_{L/E} +
\sum_{i\in I,\,\sigma_i\notin\Group_L}
\depth_{\Group_E/\Group_L}(\sigma_i\Group_L,\Group_L),
\]
or, by the additivity of $c$ (see \Cref{different-additivity}),
the equation
\[
\sum_{i\in I,\,\sigma_i\notin\Group_L}
\depth_{\Group_E/\Group_L}(\sigma_i\Group_L,\Group_L) = c_{L/K}.
\]
This generalization of \Cref{depth-sum} follows from \cite[(9), page~149]{We95}.
\end{proof}

\subsection*{Acknowledgments} 
We thank the attendees of the Harish-Chandra seminar at the University of Michigan for their patience and questions during our talks on this material.

\def\cfgrv#1{\ifmmode\setbox7\hbox{$\accent"5E#1$}\else
  \setbox7\hbox{\accent"5E#1}\penalty 10000\relax\fi\raise 1\ht7
  \hbox{\lower1.05ex\hbox to 1\wd7{\hss\accent"12\hss}}\penalty 10000
  \hskip-1\wd7\penalty 10000\box7}
\providecommand{\bysame}{\leavevmode\hbox to3em{\hrulefill}\thinspace}
\providecommand{\MR}{\relax\ifhmode\unskip\space\fi MR }
\providecommand{\MRhref}[2]{%
  \href{http://www.ams.org/mathscinet-getitem?mr=#1}{#2}
}
\providecommand{\href}[2]{#2}

\end{document}